\documentclass[12pt,reqno]{article}

\usepackage{amsmath,amsfonts,amsthm,amssymb,amsxtra}
\usepackage[numbers]{natbib}

\usepackage{color}
\usepackage{graphicx,subfigure}
\usepackage{multirow}
\usepackage{epstopdf}
\usepackage{graphicx,subfigure}
\usepackage{multirow}
\usepackage{epstopdf}

\usepackage{amsmath,latexsym,amssymb,amsfonts,amsbsy}
\usepackage{amsthm}
\usepackage{mathrsfs}
\usepackage{graphicx,subfigure}
\usepackage{multirow}
\usepackage{epstopdf}
\usepackage{bm}
\usepackage{mathtools}
\usepackage{array}
\usepackage{algorithm}
\usepackage{algorithmic}
\usepackage{palatino}
\usepackage{booktabs}
\usepackage{caption}

\usepackage{marginnote}

\usepackage{ulem}
\usepackage{cancel}

\usepackage{soul}

\usepackage[colorlinks,citecolor=red,pagebackref,hypertexnames=false]{hyperref}



\setlength{\voffset}{-.7truein}
\setlength{\textheight}{8.8truein}
\setlength{\textwidth}{6.05truein}
\setlength{\hoffset}{-.7truein}


\newtheorem{theorem}{Theorem}

\newtheorem{proposition}[theorem]{Proposition}
\newtheorem{lemma}[theorem]{Lemma}
\newtheorem{corollary}[theorem]{Corollary}

\theoremstyle{definition}

\newtheorem{definition}[theorem]{Definition}

\theoremstyle{remark}

\newtheorem{remark}[theorem]{Remark}






\renewcommand{\epsilon}{\varepsilon}

\renewcommand{\phi}{\varphi}
\newcommand{\R}{\mathbb{R}}

\newcommand{\Z}{\mathbb{Z}}
\newcommand{\om}{\omega}
\newcommand{\Om}{\Omega}

\newcommand{\E}{\mathbb{E}}
\newcommand{\lan}{\langle}
\newcommand{\ran}{\rangle}

\newcommand{\wt}{\widetilde}
\newcommand{\eps}{\varepsilon}
\renewcommand{\P}{\mathbb{P}}

\newcommand{\ipc}[2]{\left \langle #1 , \ #2 \right \rangle }

\newcommand{\vertiii}[1]{{\left\vert\kern-0.25ex\left\vert\kern-0.25ex\left\vert #1
    \right\vert\kern-0.25ex\right\vert\kern-0.25ex\right\vert}}

\DeclareMathOperator{\Bern}{Bern}

\makeatletter
\def\blfootnote{\gdef\@thefnmark{}\@footnotetext}
\makeatother



\begin{document}

\title{Approximating the ground state eigenvalue via the effective potential}
\author{Ilias Chenn, Wei Wang and Shiwen Zhang}
\date{  }
\maketitle

\newcommand{\Addresses}{{
  \bigskip
  \vskip 0.08in \noindent --------------------------------------

\footnotesize
\medskip

I.~Chenn, \textsc{Department of Mathematics, Massachusetts Institute of Technology, 2-252b, 77 Massachusetts Avenue, 
Cambridge, MA  4307 USA }\par\nopagebreak
  \textit{E-mail address}: \texttt{nehcili@mit.edu}
 
\vskip 0.4cm

  W. ~Wang, \textsc{School of Mathematics, University of Minnesota, 206 Church St SE, Minneapolis, MN 55455 USA}\par\nopagebreak
  \textit{E-mail address}: \texttt{wang9585@umn.edu }
  
\vskip 0.4cm

S.~Zhang, \textsc{School of Mathematics, University of Minnesota, 206 Church St SE, Minneapolis, MN 55455 USA}\par\nopagebreak
  \textit{E-mail address}: \texttt{zhan7294@umn.edu }
  }}

\begin{abstract}
In this paper, we study 1-d random Schr\"odinger operators on a finite interval with  Dirichlet boundary conditions.  We are interested in the approximation of the ground state energy using the minimum of the  effective potential. For the 1-d continuous Anderson Bernoulli model, we show that the ratio of the ground state energy and the minimum of the effective potential approaches  $\frac{\pi^2}{8}$ as the domain size approaches infinity. Besides, we will discuss various approximations to the ratio in different situations. There will be numerical experiments supporting our main results for the ground state energy and also supporting approximations for the excited states energies.  
\end{abstract}

\tableofcontents

\section{Introduction}
In \cite{FM}, Filoche and Mayboroda introduced the concept of localization landscape function, which is a solution $u$ to $Hu=1$ for an elliptic operator $H$.
In  \cite{FM} and a series of companion papers \cite{ADFJM-PRL,ADFJM-CPDE,ADFJM-SIAM}, the authors used the landscape function $u$ and its reciprocal $1/u$, the so called effective potential, to predict eigenvalues and eigenfunctions of $H$ without explicitly solving the eigenvalue problem. 
For a Schr\"{o}dinger operator  $H= -\Delta + V$ with a nonegative Anderson type potential $V$ on some bounded domain $\Omega\subset\R^d$,  denote by $u$ the  associated landscape function of $H$ and by $\lambda_n$ the $n$-th smallest eigenvalue of $H$.
Arnold et. al. observed in \cite{ADFJM-SIAM} that
\begin{equation}
    \frac{\lambda_n}{\left(\min \frac{1}{u}\right)_n}  \approx  1+ \frac{d}{4} , \label{eq:obs}
\end{equation}
where $\left(\min \frac{1}{u}\right)_n$ is the $n$-th local minimum of $1/u$ on $\Om$. \cite{ADFJM-SIAM} provided convincing numerical evidence and heuristic arguments to support \eqref{eq:obs}. 

In this paper, we focus on a  1-d Schr\"{o}dinger operator $H= -\Delta + V$ on a finite domain (interval), with a piecewise constant Anderson type potential $V$(see the precise definition in \eqref{eq:V}). We provide a detailed study of the observation \eqref{eq:obs} for the ground state energy case $n=1$.   More precisely, we studied the asymptotic behavior of the quantity $\dfrac{\lambda_1}{\min \frac{1}{u}}$ either as the domain size or the strength of the potential approaches infinity.  In particular, we show that
\begin{equation}\label{eq:first}
\dfrac{\lambda_1}{\min \frac{1}{u}}\approx\dfrac{\pi^2}{8}
\end{equation}
in either case for the  Anderson Bernoulli model. 
Moreover, we will infer similar results of  $\lambda_n/\left(\min \frac{1}{u}\right)_n$ for the excited states energies case $n\ge2$ by numerical means. We may apply these results to predict eigenvalues $\lambda_n$ at the bottom of the spectrum, using the local minima $\left(\min \frac{1}{u}\right)_n$. 

	 \begin{figure}[H]
	 	\centerline
	 	{	\includegraphics[width=1.2\textwidth]{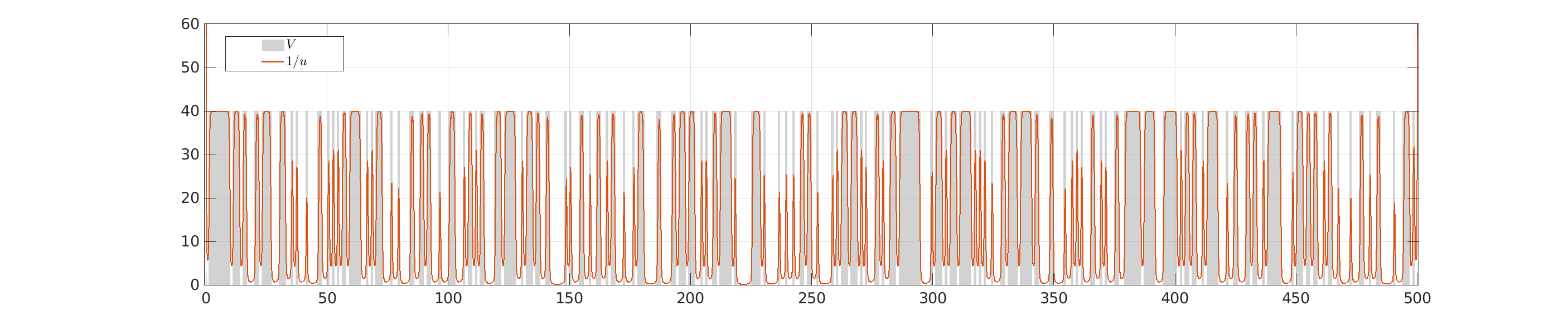}	}
	 	\caption{A Bernoulli potential $V$ and the associated effective potential $1/u$. $V$: 50\% 0 and  50\% 40 on the domain [0,500]}
	 	\label{fig1}
	 \end{figure}
	 \begin{figure}[H]
	 	\centerline{
	 		\includegraphics[width=0.5\textwidth]{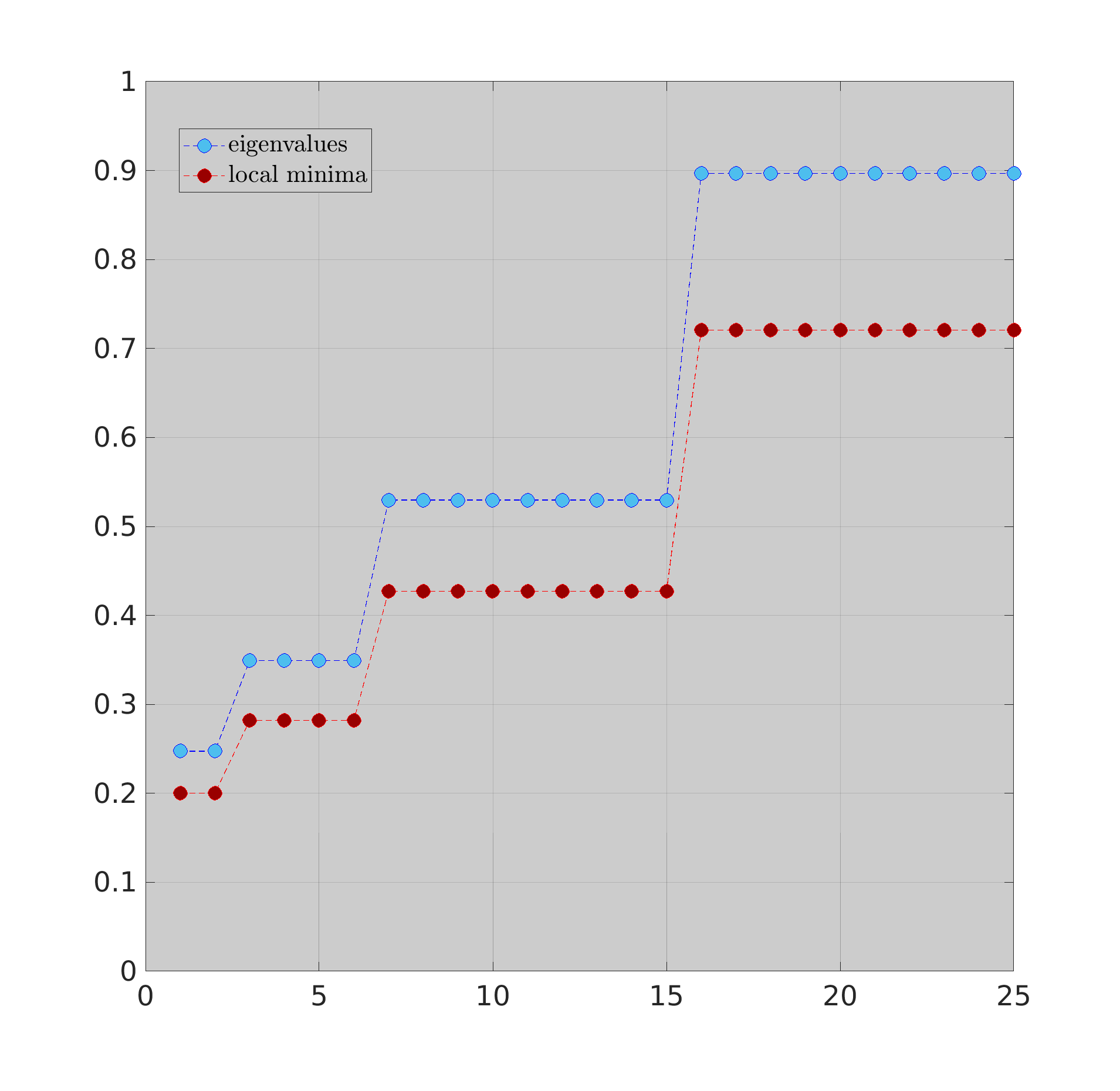}
	 		\includegraphics[width=0.5\textwidth]{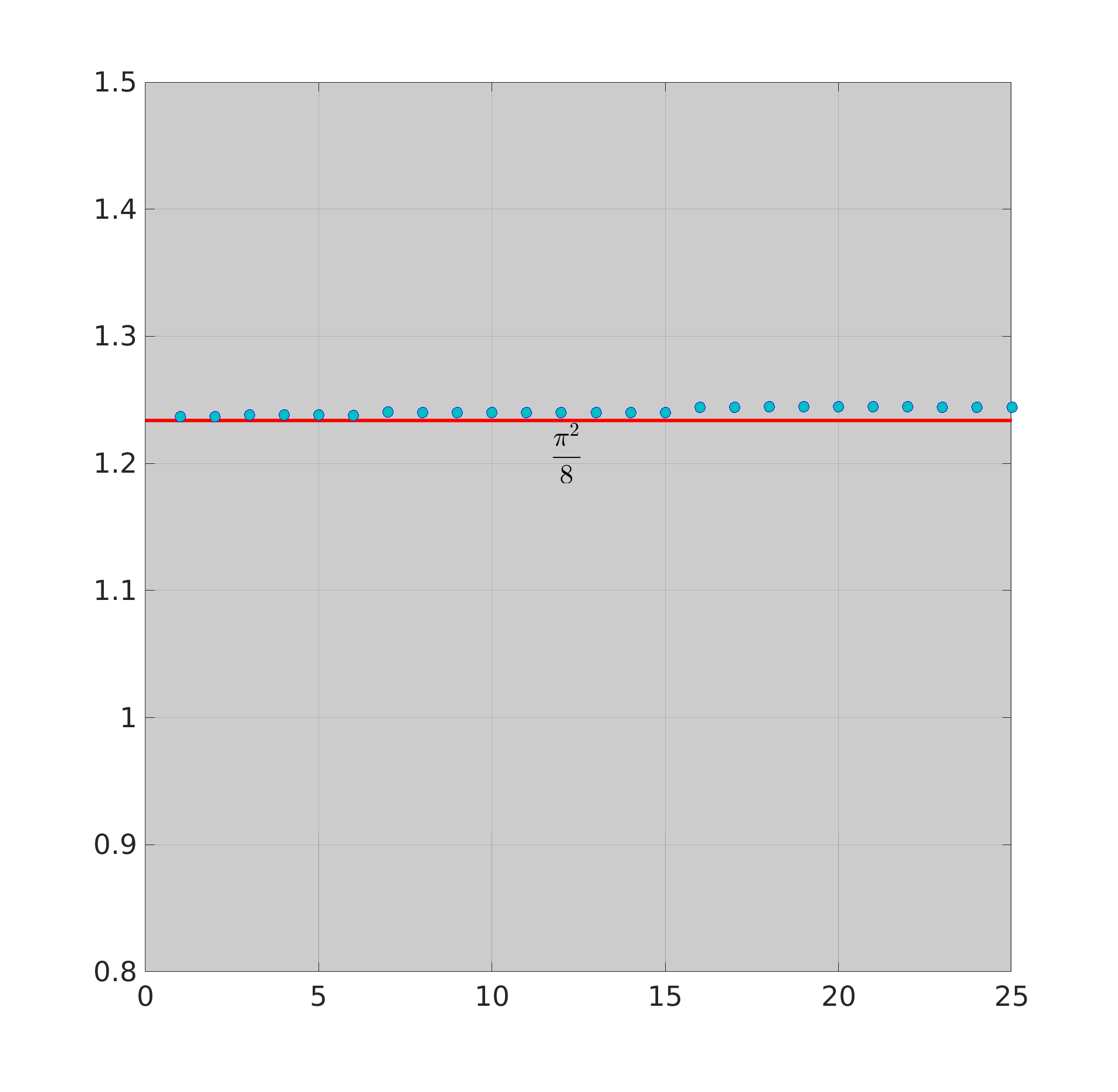}
	 	}	
	 	\caption{ For the potential $V$ in Figure \ref{fig1}, the left figure displays a comparison of the first 25 eigenvalues with the corresponding local minima.  The right figure displays the corresponding ratio $\lambda_n/\left(\min \frac{1}{u}\right)_n $, $n=1,2,\cdots,25$, and the horizontal reference line $\frac{\pi^2}{8}$ .}\label{fig:2}	
	 \end{figure}

Before we state our main results in the next section, let us discuss more background and related works. The simplest case of \eqref{eq:obs} is that of the relation between the ground state energy $\lambda_1$ and $\min \frac{1}{u}$, as we intend to study in this paper.   
The landscape function $u=H^{-1}1$ is also known as the torsion function in many other contexts, see e.g. \cite{vC,Vogt} and references therein. 
A recent work due to Vogt  \cite{Vogt}, leveraging on \cite{Ou}, provides a quantitative bound for \eqref{eq:obs} for the ground state $\lambda_1$ in the form
\begin{equation}\label{eq:Vogt}
    1 \leq \frac{\lambda_1}{\min \frac{1}{u}} \leq 1 + d/8 + c d^{1/2},
\end{equation}
where the explicit constant $c\approx 0.6055$. 
The bounds \eqref{eq:Vogt} hold for a large class of operators, but are not optimal as was remarked by the author in \cite{Vogt}. Indeed, in dimension $d=1$, for the free Hamiltonian $-\Delta$ on an interval $[0,L]$ with Dirichlet boundary conditions, the ground state energy $\lambda_1$ and the landscape function $u$ can be computed explicitly:
\begin{equation*}
\lambda_1=\frac{\pi^2}{L^2},\quad {\rm and} \quad u=\frac{1}{2}x(L-x), 
\end{equation*}
which implies  
\begin{equation}\label{eq:free}
    \frac{\lambda_1}{\min \frac{1}{u}} = \frac{\pi^2}{8} \approx 1.23 < 1+\frac{1}{4}.
\end{equation}

As we see in Figure \ref{fig:2}, the ratio $\pi^2/8$, given by the ground state of the free system \eqref{eq:free}, predicts the asymptotic behavior of ${\lambda_n}/({\min \frac{1}{u}})_n$ for the Anderson model $H=-\Delta+V$. We will provide the rigorous proof of this  prediction for the ground state energy case $n=1$ and more numerical experiments supporting the prediction for the excited states energies case $n\ge 2$.

Throughout the paper,  we will denote by $C_{i}$ some finite constants. For simplicity, $C$ or $C_{i}$ may stand for different constants simultaneously. We will write $ A\lesssim B$, $B\gtrsim A$, or $A = O(B)$ if $A \leq CB$ for some constant $C$.  Lastly, we will write $A \approx B$ if $A \lesssim B$ and $B \lesssim A$.  

The remainder of this paper is organized as follows. 
In Section \ref{sec:theory}, we record our main results. We include some preliminaries and prove the main results in Section \ref{sec:proof}. Section \ref{sec:numerics} presents numerical results supporting the analysis.

\section{Main results} \label{sec:theory}
In this paper, we will be concerned with the 1-d random potential 
\begin{equation}\label{eq:V}
    V_{\om}=V_{\om}(x)=\sum_{j\in\Z}\omega_j\chi(x-j)\ \ {\rm for}\ \ x\in \R,
\end{equation}
 where $\chi(x)$ is the characteristic function of $[0,1)$, and $\{\omega_j\}_{i\in\Z}$ are nonegative, independent and identically distributed (i.i.d.)
random variables on a probability space $(\Theta,{\mathcal F},\P)$. Throughout the paper, we will refer to \eqref{eq:V} a piecewise constant Anderson type potential.

For simplicity, we will call such $V_{\om}$ an $\om$-piecewise  potential, where $\om$ obeys the common distribution of $\om_n$. 
We consider the 1-d random Schr\"odinger operator on $\Omega=(0,L)$
\begin{equation}\label{eq:H}
    H=-\Delta+k V_{\om},
\end{equation} 
where $V_{\om} $ is a nonegative $\om$-piecewise potential as in \eqref{eq:V}, and $k\ge 0$ is a coupling constant measuring the strength of the potential. 
Such $H$ with an Anderson type potential $V_{\om}$ is a typical class of the one-dimensional (continuous) alloy-type Anderson model. 
In this paper, we will simply call $H$ the Anderson Bernoulli model if $\om_j$ are i.i.d. Bernoulli random variables. We restrict our scope to a smaller subset of Anderson model with potentials $V_{\om}$ in the piecewise constant form \eqref{eq:V} for the sake of clarity in our theoretical treatment. These potentials capture the main features of random potentials while being readily used in models of semi-conductor simulations. We refer readers to a more detailed introduction to more general Anderson models and  alloy-type potentials in e.g. \cite{AW,DSS,Kir,KM1} and references therein.

We always assume the domain size $L$ is a positive integer for simplicity. We are interested in the Dirichlet eigenvalue problem of $H$ on $\Omega$. Since $kV_\om \ge 0$, the ground state eigenvalue of $H$, denoted by $\lambda_1$, is always strictly positive. The landscape function $u$, the solution to the Dirichlet boundary problem 
\[Hu=1\ \  {\rm on}\ \Omega, \ \ u(0)=u(L)=0, \]
exists and is unique. Moreover, $u>0$ by the maximum principle. See more about the landscape function in Section \ref{sec:pre}. 

We will study the asymptotic behavior of the quantity $\lambda_1/(\min \frac{1}{u})$, or equivalently $\lambda_1 \max u$ as $L$ or $k$ varies, where $\max u$ is the maximum of $u$ on $[0,L]$.
In each one of our results, there is a competition between 
the strength of the disorder and certain characteristic size of  spatial length.

The first result is
\begin{theorem}
\label{thm:1}
 
 Let $H=-\Delta+kV_{\om}$ be as in \eqref{eq:H} and  with a nonegative $\om$-piecewise potential $V_{\om}$ as in \eqref{eq:V}. Let $\lambda_1$ and $u$ be the ground state eigenvalue  and the landscape function of $H$ on $\Omega$ with   Dirichlet boundary conditions, respectively. 
 Suppose $\om$ is a nonegative random variable on $\R$ such that
\begin{equation}\label{eq:sing-F}
    0 < \P(\om = 0) < 1.
\end{equation} 
 For any positive integer $L$ and any realization of $\om$, let $L_{\max}=L_{\max}(L,\om)$ denote the longest length of an interval in $[0,L]$ on which $V_{\om}(x) = 0$. Suppose there are constants $C, \alpha>0$ such that $k$ satisfies
\begin{equation}
 kL_{\max}^{1-\alpha}>C \label{eq:k-beta}
\end{equation} 
for all sufficiently large $L$ and almost surely all $\om$. 
Then  
\begin{equation}
     \lim_{L \rightarrow \infty} \frac{\lambda_1}{\min \frac{1}{u}}   = \frac{\pi^2}{8} \label{eq:main-pi28}
  \end{equation} with probability one. 
 
 In particular, 
 for any fixed $k>0$, \eqref{eq:main-pi28} holds with probability one.
\end{theorem}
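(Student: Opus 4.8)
The plan is to reduce \eqref{eq:main-pi28} to the two sharp asymptotics
\begin{equation}\label{eq:prop-sharp}
\lambda_1=\frac{\pi^2}{L_{\max}^2}\,\bigl(1+o(1)\bigr),
\qquad
\max_{[0,L]}u=\frac{L_{\max}^2}{8}\,\bigl(1+o(1)\bigr)
\qquad(L\to\infty,\ \text{a.s.}),
\end{equation}
since $\lambda_1/\min\tfrac1u=\lambda_1\max u$ and the product of the two right-hand sides is $\tfrac{\pi^2}{8}(1+o(1))$. The mechanism is that both factors are, to leading order, controlled by the single longest field-free interval $I_0=(a,a+L_{\max})\subset[0,L]$ on which $V_\om\equiv0$: there $H=-\Delta$, whose Dirichlet ground state energy is $\pi^2/L_{\max}^2$ and whose torsion function $\tfrac12(x-a)(a+L_{\max}-x)$ has maximum $L_{\max}^2/8$; the two flanking barriers must be shown to behave asymptotically as Dirichlet walls, and this is exactly what hypothesis \eqref{eq:k-beta} guarantees.

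\smallskip

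\noindent\emph{Probabilistic preliminaries.} Since $0<\P(\om=0)<1$ one has $L_{\max}(L,\om)\asymp\log L$, in particular $L_{\max}\to\infty$ a.s.; a standard Borel--Cantelli / second-moment argument shows that, a.s.\ for large $L$ and every fixed $\eps>0$, any two field-free runs of length $\ge(1-\eps)L_{\max}$ are at least $L^{\eps/2}$ cells apart, and (needed only when $\om$ is not Bernoulli, where $0<\P(0<\om<\eps_0)<1$) that no run of $\ge\eps L_{\max}$ consecutive cells all has $\om_j<\eps_0$. Moreover \eqref{eq:k-beta} forces $kL_{\max}\to\infty$ and $kL_{\max}^2\to\infty$ at a polynomial rate in $L_{\max}$, so the barrier penetration length $k^{-1/2}$ stays $o(L_{\max})$ even after multiplication by $\log L_{\max}$. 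The last assertion of the theorem is then immediate: for fixed $k>0$ take $\alpha=\tfrac12$ and $C=1$ in \eqref{eq:k-beta}, which holds since $kL_{\max}^{1/2}\to\infty$ a.s.

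\smallskip

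\noindent\emph{Landscape asymptotics.} The lower bound $\max u\ge L_{\max}^2/8$ is a one-line comparison: if $w_0$ is the Dirichlet torsion function of $I_0$ then $-(u-w_0)''=0$ on $I_0$ and $u\ge0=w_0$ on $\partial I_0$, so $u\ge w_0$ by the maximum principle. For the matching upper bound I would use the Feynman--Kac representation $u(x)=\E^x\bigl[\int_0^{\tau_\Omega}e^{-k\int_0^tV_\om(B_s)\,ds}\,dt\bigr]$, with $B$ Brownian motion and $\tau_\Omega$ the exit time from $\Omega$. For $x\in I_0$, stopping at the exit time $\sigma$ from $I_0$ --- up to which no killing occurs, since $V_\om\equiv0$ there --- gives $u(x)=\E^x[\sigma]+\E^x[u(B_\sigma)]\le\tfrac{L_{\max}^2}{8}+\max_{\partial I_0}u$. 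The same device, carried one step further across the flanking occupied cells, shows that the value of $u$ at the boundary of \emph{every} field-free run is $o(L_{\max}^2)$ plus a multiple of $\max u$ that tends to $0$ --- the killing factor $e^{-k\int V}$ across occupied cells supplying the contraction, quantified by \eqref{eq:k-beta}; solving the resulting inequality for $\max u$ yields $\max u\le\tfrac{L_{\max}^2}{8}(1+o(1))$.

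\smallskip

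\noindent\emph{Eigenvalue asymptotics.} The upper bound $\lambda_1\le\pi^2/L_{\max}^2$ is immediate from the Rayleigh quotient at the trial function $\sin\!\bigl(\pi(x-a)/L_{\max}\bigr)$ supported on $I_0$, where $V_\om$ vanishes. \emph{The main obstacle is the matching lower bound} $\lambda_1\ge\tfrac{\pi^2}{L_{\max}^2}(1-o(1))$, which is where \eqref{eq:k-beta} really enters: one must show that the possibly weak (when $k$ is small) barriers flanking $I_0$ confine the ground state well enough. I would argue that the normalized ground state $\psi_1>0$ has, by a Combes--Thomas / Agmon estimate, exponentially small $L^2$-mass at points separated from $\{V_\om=0\}$ by at least $R$ occupied cells, uniformly in $R\gtrsim k^{-1/2}\log L_{\max}$ --- here one uses $kV_\om-\lambda_1\gtrsim k$ on the occupied cells (for general $\om$, on those with $\om_j\ge\eps_0$, the others being harmless by the probabilistic step). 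By \eqref{eq:k-beta} such an $R$ is $o(L_{\max})$, and by the separation of long runs the $R$-neighbourhood of $\{V_\om=0\}$ is a disjoint union of intervals each of length $L_{\max}(1+o(1))$. Truncating $\psi_1$ to that neighbourhood (which loses exponentially little mass and energy) and applying the one-dimensional Poincar\'e inequality on each of those intervals yields $\lambda_1\ge\int_0^L|\psi_1'|^2\ge\tfrac{\pi^2}{L_{\max}^2}(1-o(1))$. In one dimension the same can be extracted from the Riccati equation $p'=kV_\om-\lambda_1-p^2$ for $p=\psi_1'/\psi_1$, which on $I_0$ integrates to $\sqrt{\lambda_1}\,L_{\max}=\arctan\tfrac{p(a)}{\sqrt{\lambda_1}}-\arctan\tfrac{p(a+L_{\max})}{\sqrt{\lambda_1}}$, the two endpoint slopes having magnitude $\gtrsim\sqrt k$ because of the adjacent barriers, so that the right-hand side is $\pi-o(1)$. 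Inserting \eqref{eq:prop-sharp} into $\lambda_1\max u$ then gives \eqref{eq:main-pi28}.
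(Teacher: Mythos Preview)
Your overall reduction to the two sharp asymptotics $\lambda_1=\pi^2L_{\max}^{-2}(1+o(1))$ and $\max u=\tfrac18 L_{\max}^2(1+o(1))$ is the same as the paper's, and the two easy directions (variational upper bound on $\lambda_1$, torsion comparison for the lower bound on $\max u$) coincide with the paper's arguments. Your Feynman--Kac route to the upper bound on $\max u$ is a legitimate alternative to the paper's explicit piecewise-quadratic super-solution, though the sketch is thin where the cells flanking $I_0$ carry very small but positive $\omega_j$; the paper handles this by passing to a Bernoulli minorant $V_\omega^{k\eps}\le kV_\omega$ at barrier height $k\eps$, at the price of replacing $L_{\max}$ by the $\eps$-well length $T_\eps$ and then sending $\eps\to0$.

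The genuine gap is in the lower bound $\lambda_1\ge\pi^2L_{\max}^{-2}(1-o(1))$. Your Poincar\'e step claims that the $R$-neighbourhood of $\{V_\omega=0\}$ breaks into intervals each of length at most $L_{\max}(1+o(1))$, but this is false: that set contains \emph{all} zero wells, most of them short, and any two consecutive wells separated by a barrier of length $\le2R$ merge in the $R$-fattening. For fixed $k$ you need $R\gtrsim\log L_{\max}$, and since barrier lengths are geometric of order $1$, essentially all barriers are shorter than $2R$; the spacing between the rare barriers of length $>2R$ is of order $(1-p)^{-2R}$, a positive power of $L_{\max}$, so the merged clusters are far longer than $L_{\max}$. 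Once you drop the potential term, Poincar\'e on such a cluster gives a bound nowhere near $\pi^2/L_{\max}^2$. The ``separation of long runs'' you invoke only keeps near-maximal wells apart; it does not stop short wells from clustering. Your Riccati alternative is likewise incomplete: it needs $|p(a)|,|p(a+L_{\max})|\gg\sqrt{\lambda_1}$, but a single occupied cell adjacent to $I_0$ followed immediately by another well does not force $\psi_1$ into its decaying mode there, so $p(a)$ can be small. The paper's argument is genuinely different: it classifies each well $I_i$ as \emph{heavy} or \emph{light} according to whether $\max(\psi_1(l_i),\psi_1(r_i))^2/\|\psi_1\|_{L^2(I_i)}^2$ lies below a threshold, shows via an explicit hyperbolic computation on each barrier that the total light-well mass is $o(1)$ (so some heavy well exists), and on any heavy well the small boundary values force the sine phase $\sqrt{\lambda_1}\,L_i$ to lie within $o(1)$ of $\pi$, giving $\lambda_1\ge\pi^2L_i^{-2}(1-o(1))\ge\pi^2L_{\max}^{-2}(1-o(1))$.
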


Theorem \ref{thm:1} shows that
\begin{equation*}
     \frac{\lambda_1}{\min \frac{1}{u}}  \approx \frac{\pi^2}{8} \approx 1.23
\end{equation*}
as $L \rightarrow \infty$. Since  $1+\frac{1}{4} = 1.25$, the observation made in \cite{ADFJM-SIAM}
\begin{equation*}
   \frac{\lambda_1}{\min \frac{1}{u}}\approx  1+\frac{d}{4}
\end{equation*}
 holds approximately in $d=1$. Though, the more accurate constant in the asymptotic regimes is in fact $\frac{\pi^2}{8}$. 

The first theorem considers the limit of $\lambda_1/  (\min \frac{1}{u})$ as the domain size $L\to\infty$. Alternatively, we may fix $L$ and consider the semi-classical limit for extremely large disorder $k$. We obtain the same limit $\frac{\pi^2}{8}$ as $k\to\infty$ if \eqref{eq:sing-F} holds. Moreover, we see a different limit if there is no atom at $0$ in the probability distribution. More precisely, we prove
\begin{theorem}[Semi-classical limit] \label{thm:2}
Let $H=-\Delta+kV_{\om}$ be as in \eqref{eq:H} with a nonegative $\om$-piecewise potential $V_{\om}$ as in \eqref{eq:V}. Let $p=\P(\om = 0)$. Fix any positive integer $L$. Then
\begin{equation}
       \lim_{k \rightarrow \infty}\,   \frac{\lambda_1}{\min \frac{1}{u}}  = \frac{\pi^2}{8}, \label{eq:semi-1}
\end{equation}
with probability $1-(1-p)^L$, and 
\begin{equation}\label{eq:semi-2}
    \lim_{k \rightarrow \infty} \frac{\lambda_1}{\min \frac{1}{u}} =1,
\end{equation}
with probability $(1-p)^L$. 

In particular, if $\P(\om = 0)=0$, then \eqref{eq:semi-2} holds with probability one. 
\end{theorem}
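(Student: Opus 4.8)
\emph{Plan.} Since $u>0$, we have $\min\frac1u=\frac1{\max u}$, so the quantity to analyze is $\lambda_1\max u$. The whole statement will follow from a single probabilistic dichotomy. Let $\mathcal A=\{\om:\ \om_j=0\text{ for some }j\in\{0,1,\dots,L-1\}\}$. Because the $\om_j$ are i.i.d.\ with $\P(\om=0)=p$ and exactly the $L$ sites $j=0,\dots,L-1$ meet $[0,L]$, we get $\P(\mathcal A)=1-(1-p)^L$ and $\P(\mathcal A^c)=(1-p)^L$; on $\mathcal A^c$ every $\om_j$ with $0\le j\le L-1$ is strictly positive, while on $\mathcal A$ the longest zero-interval satisfies $L_{\max}\ge 1$. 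This already matches the two probabilities in the statement, and the final remark is immediate since $p=0$ forces $\P(\mathcal A^c)=1$.

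\emph{The event $\mathcal A^c$ (proof of \eqref{eq:semi-2}).} Here $V_\om\ge c:=\min_{0\le j\le L-1}\om_j>0$ a.e.\ on $(0,L)$. From $-\Delta\ge 0$ we get $\lambda_1\ge kc$, and testing the Rayleigh quotient with the function $\sin(\pi(x-j_0))$ supported on the unit cell where $V_\om=c$ gives $\lambda_1\le kc+\pi^2$. For the landscape function, $w:=u-\frac1{kc}$ solves $(-\Delta+kV_\om)w=1-V_\om/c\le 0$ on $(0,L)$ with $w(0)=w(L)=-\frac1{kc}<0$, so the maximum principle yields $\max u\le\frac1{kc}$. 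Combining with the elementary lower bound $\lambda_1\max u\ge 1$ from \eqref{eq:Vogt},
\[
1\ \le\ \lambda_1\max u\ \le\ \frac{kc+\pi^2}{kc}\ \longrightarrow\ 1 \qquad(k\to\infty),
\]
which is \eqref{eq:semi-2}.

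\emph{The event $\mathcal A$ (proof of \eqref{eq:semi-1}).} Now $L_{\max}=L_{\max}(L,\om)\ge 1$ is a fixed positive integer; write $I^*=(a,b)$ for a longest subinterval of $[0,L]$ on which $V_\om=0$, so $b-a=L_{\max}$, and let $I_m=(a_m,b_m)$ and $E_\ell=(c_\ell,d_\ell)$ denote the components of $\{V_\om=0\}$ and of $\{V_\om>0\}$ (if $V_\om\equiv 0$ the free formulas give the ratio $\pi^2/8$ exactly, so assume $v_{\min}:=\min\{\om_j:\om_j>0,\ 0\le j\le L-1\}>0$). I claim $\lambda_1\to\pi^2/L_{\max}^2$ and $\max u\to L_{\max}^2/8$ as $k\to\infty$, which multiply to $\pi^2/8$. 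For $\lambda_1$: the upper bound $\lambda_1\le\pi^2/L_{\max}^2$ holds for all $k$ by testing with $\sin(\pi(x-a)/L_{\max})$ on $I^*$; for the matching lower bound, a normalized positive ground state $\psi_k$ has $\|\psi_k'\|_2^2\le\lambda_1\le\pi^2$ and $k\int V_\om\psi_k^2\le\pi^2$, hence $\int_{\{V_\om>0\}}\psi_k^2\le\pi^2/(kv_{\min})\to0$; along any subsequence $\psi_k\rightharpoonup\psi$ in $H^1_0(0,L)$ and strongly in $L^2$, so $\|\psi\|_2=1$ and $\psi\equiv0$ on $\overline{\{V_\om>0\}}$, whence $\psi|_{I_m}\in H^1_0(I_m)$ and Friedrichs' inequality on each $I_m$ (length $\le L_{\max}$) together with weak lower semicontinuity give $\liminf_k\lambda_1\ge\|\psi'\|_2^2\ge\pi^2/L_{\max}^2$. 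For $\max u$: on $I^*$ we have $-u''=1$ with $u(a),u(b)\ge0$, so $u\ge\frac12(x-a)(b-x)$ there, giving $\max u\ge L_{\max}^2/8$ for every $k$. The reverse inequality is the technical core: build a global supersolution $\overline u_k\ge u_k$ by gluing, on each $I_m$, the concave parabola $\frac12(x-a_m)(b_m-x)$ plus the affine interpolation of its endpoint values to, on each $E_\ell$, the layer $\frac1{kv_{\min}}+A_\ell\bigl(e^{-\sqrt{kv_{\min}}(x-c_\ell)}+e^{-\sqrt{kv_{\min}}(d_\ell-x)}\bigr)$, which solves $(-\Delta+kv_{\min})\overline u_k=1$ exactly. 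Choosing $A_\ell\asymp L/\sqrt{k}\to0$ drives the amplitude on each $E_\ell$ and the endpoint data fed into the parabolas to $0$, and — crucially — makes the layer rise steeply enough at each interface with $\{V_\om=0\}$ that the glued function has a concave corner there, so $\overline u_k$ is a distributional supersolution of $(-\Delta+kV_\om)\overline u_k\ge1$ with nonnegative boundary values; the comparison principle then gives $u_k\le\overline u_k$, whose maximum is $\max_m(\ell_m^2/8+o(1))\le L_{\max}^2/8+o(1)$.

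\emph{Main obstacle.} I expect the hard point to be the supersolution construction for the upper bound on $\max u$: one must check that the exponential boundary layers on the (finitely many) components of $\{V_\om>0\}$ can be chosen simultaneously of vanishing amplitude and steep enough at the interfaces to produce concave corners, so that the piecewise model function is genuinely a global supersolution. This is exactly the place where large coupling is used, and it is the $k\to\infty$ analogue of the role played by hypothesis \eqref{eq:k-beta} in Theorem~\ref{thm:1}. The second point requiring care, though it is standard, is the lower bound $\liminf_k\lambda_1\ge\pi^2/L_{\max}^2$ via $H^1$-compactness, concentration of the ground state on the zero set of $V_\om$, and Friedrichs' inequality on each zero-component.
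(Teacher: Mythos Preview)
Your proposal is correct. For the event $\mathcal A^c$ it is essentially identical to the paper's argument (landscape uncertainty principle for the lower bound, a constant supersolution $1/(kc)$ for the upper bound, and a sine test function on a minimizing cell for the eigenvalue upper bound).

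On the event $\mathcal A$, however, you take a genuinely different route in two places. For the lower bound $\liminf_k\lambda_1\ge\pi^2/L_{\max}^2$, the paper invokes the quantitative two-sided bounds of Lemma~\ref{lem:Bernoulli-est}, whose proof goes through an explicit heavy/light well decomposition and a pointwise analysis of the ground state on each wall; you replace this by a soft $H^1$-compactness argument (uniform $H^1$ bound on the ground states, concentration on $\{V_\om=0\}$, continuity in one dimension to force vanishing on the closure, and Friedrichs' inequality on each zero-well). This is cleaner but yields no rate of convergence; that is harmless here because $L$ is fixed, though it would not suffice for Theorem~\ref{thm:1}. For the upper bound on $\max u$, the paper's supersolution (again from Lemma~\ref{lem:Bernoulli-est}) is a piecewise-polynomial function with quadratic transition layers of width $1/\sqrt{b}$ near the well boundaries, constructed to be $C^1$ globally; yours uses exact exponential solutions of $(-\Delta+kv_{\min})\cdot=1$ on the walls glued to parabola-plus-affine pieces on the wells, with concave corners at the interfaces. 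Both constructions work. The paper's has the practical advantage that $C^1$-gluing avoids the sign check at each corner; yours has the conceptual advantage of solving the equation exactly on each wall. The obstacle you single out --- verifying that the interface corners are concave while the layer amplitudes $A_\ell$ vanish --- is genuine but routine once $A_\ell$ is taken of order $C/\sqrt{kv_{\min}}$ with $C$ depending only on $L$; then $A_\ell\sqrt{kv_{\min}}$ dominates the bounded parabola slopes at each interface and $A_\ell\to 0$ drives the endpoint data to zero.
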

The analysis in this direction is very natural. If there is at least one zero well in the domain, then the walls created by the nonegative potential become  higher and higher as $k$ increases. The system is eventually decoupled into direct sum of (negative) free Laplacian on each zero well as $k\to \infty$, in which case we obtain the semi-classical limit \eqref{eq:semi-1} as in the free case. In the case $\inf V_{\om} >0$, $-\Delta+kV_{\om}$ behaves ``diagonally dominantly'' as $kV_{\om}$ on any finite domain as $k\to \infty$. Hence, $\lambda_1\approx \inf kV_{\om} \approx \min \frac{1}{u}$, which leads to\eqref{eq:semi-2}.  We include the detailed proof in Section \ref{sec:semi-classical}. 

Combing Theorem \ref{thm:1} and \ref{thm:2}, we see that for the Anderson Bernoulli model, the ratio ${\lambda_1}/{\min \frac{1}{u}}$ approaches $\dfrac{\pi^2}{8}$ either as the domain size $L$ or  the disorder strength $k$ approaches infinity. 
\begin{corollary}\label{cor:Ber}
Let $H=-\Delta+kV_{\om}$ be as in \eqref{eq:H} with an $\om$-piecewise potential $V_{\om}$ as in \eqref{eq:V}. Suppose $\om$ satisfies the $\{0,1\}$-Bernoulli distribution, i.e., $\P(\om=0)=p$ and $\P(\om=1)=1-p$ for some $p\in (0,1)$. Then for any fixed $k>0$, 
\begin{equation*}
     \lim_{L \rightarrow \infty} \frac{\lambda_1}{\min \frac{1}{u}}   = \frac{\pi^2}{8}
  \end{equation*} with probability one. And for any fixed positive integer $L$, 
\begin{equation*}
       \lim_{k \rightarrow \infty}\,   \frac{\lambda_1}{\min \frac{1}{u}}  = \frac{\pi^2}{8}
\end{equation*}
with probability $1-(1-p)^L$. 
\end{corollary}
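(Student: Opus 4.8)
The plan is to obtain the corollary as an immediate consequence of Theorems~\ref{thm:1} and~\ref{thm:2}; the only thing that genuinely needs checking is that the $\{0,1\}$-Bernoulli distribution satisfies the hypotheses of Theorem~\ref{thm:1}, in particular the growth condition~\eqref{eq:k-beta}.

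For the first assertion (fix $k>0$, let $L\to\infty$), condition~\eqref{eq:sing-F} is clear since $\P(\om=0)=p\in(0,1)$. To verify~\eqref{eq:k-beta}, observe that for a $\{0,1\}$-valued $\om$-piecewise potential, $V_\om(x)=0$ on an interval precisely when the corresponding coefficients $\om_j$ all vanish, so $L_{\max}(L,\om)$ is the length of the longest run of zeros among $\om_0,\dots,\om_{L-1}$. I would then argue that $L_{\max}(L,\om)\to\infty$ almost surely: split the index set into consecutive blocks of length $m$; each block consists entirely of zeros with probability $p^m>0$, independently across blocks, so by the second Borel--Cantelli lemma infinitely many all-zero blocks occur, giving $L_{\max}(L,\om)\ge m$ for all large $L$; intersecting over $m\in\N$ yields $L_{\max}(L,\om)\to\infty$ a.s. Hence, taking any $\alpha\in(0,1)$, say $\alpha=1/2$, we get $kL_{\max}^{1-\alpha}=kL_{\max}^{1/2}\to\infty$ for every fixed $k>0$, so~\eqref{eq:k-beta} holds for all sufficiently large $L$ and almost every $\om$. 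Theorem~\ref{thm:1} then gives $\lim_{L\to\infty}\lambda_1/(\min\frac{1}{u})=\pi^2/8$ with probability one.

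The second assertion (fix the integer $L$, let $k\to\infty$) is just Theorem~\ref{thm:2} specialized to $p=\P(\om=0)$: it provides $\lim_{k\to\infty}\lambda_1/(\min\frac{1}{u})=\pi^2/8$ on an event of probability $1-(1-p)^L$ (and the value $1$ on the complementary event of probability $(1-p)^L$, which the corollary simply omits); no further work is required.

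The only non-citational step is the almost-sure divergence of the longest zero-run $L_{\max}(L,\om)$, which is a routine Borel--Cantelli computation, so I do not expect any real obstacle. If a quantitative rate were wanted one could invoke the classical longest-success-run asymptotics $L_{\max}(L,\om)\sim\log_{1/p}L$, but mere divergence is all that~\eqref{eq:k-beta} demands for fixed $k>0$.
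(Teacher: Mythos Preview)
Your proposal is correct and matches the paper's approach: the corollary is stated immediately after Theorem~\ref{thm:2} as a direct combination of Theorems~\ref{thm:1} and~\ref{thm:2}, with no separate proof given. The only minor difference is that you supply a self-contained Borel--Cantelli argument for $L_{\max}\to\infty$ a.s., whereas the paper (inside the proof of Theorem~\ref{thm:1}) cites \cite{Bishop} for this fact; note also that the ``In particular'' clause of Theorem~\ref{thm:1} already covers the fixed-$k$ case, so you could have invoked it directly without re-verifying~\eqref{eq:k-beta}.
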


Even though the observed constant $1+\dfrac{1}{4}$ in \cite{ADFJM-SIAM} is not accurate in view of the asymptotic behaviors in Theorem \ref{thm:1},\ref{thm:2}, we will show that the optimal proportionality constant can actually range from $1$ to $\dfrac{\pi^2}{8}$ for suitable finite domain size with relatively small disorder.  More precisely,

\begin{theorem}\label{thm:3}
Let $H=-\Delta+kV_{\om}$ be as in \eqref{eq:H} with an $\om$-piecewise potential $V_{\om}$ as in \eqref{eq:V}. Suppose $\omega$ is nonegative and bounded from above. 
For any $r\in [1,\dfrac{\pi^2}{8}]$ and any positive integer $L$, there is $k=k(r,L)$ such that 
\begin{equation}\label{eq:ratio-any}
  \lim_{L\to \infty}   \frac{\lambda_1}{\min \frac{1}{u}} =r 
\end{equation}
with probability one. 

In particular, for any sequence of $k=k(L)$ satisfying
\begin{equation}\label{eq:deloc-0}
    \lim_{L \rightarrow \infty} kL^2 = 0,
\end{equation}
one has 
\begin{equation}
    \lim_{L \rightarrow \infty} \frac{\lambda_1}{\min \frac{1}{u}} = \frac{\pi^2}{8} \label{eq:weak2}
\end{equation}
with probability one.  

For any sequence of $k=k(L)$ satisfying
\begin{equation}\label{eq:deloc-1}
    \lim_{L \rightarrow \infty} kL^2 = \infty\ \ {\rm and}\ \  \limsup_{L \rightarrow \infty} kL^{2-\beta}<\infty
\end{equation}
for some $0<\beta<1/4$, 
one has 
\begin{equation}
    \lim_{L \rightarrow \infty} \frac{\lambda_1}{\min \frac{1}{u}} = 1 \label{eq:weak1}
\end{equation}
with probability one. 
\end{theorem}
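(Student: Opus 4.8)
The plan is to exploit the exact scale invariance of the quantity in question and then reduce everything to one‑dimensional random homogenization. Writing $x=Ly$ turns $H=-\Delta+kV_{\om}$ on $(0,L)$ into $\frac1{L^2}\bigl(-\partial_y^2+\kappa\,\tilde V\bigr)$ on $(0,1)$, where $\kappa:=kL^2$ and $\tilde V(y):=V_{\om}(Ly)$ is the piecewise constant potential built from the \emph{same} i.i.d.\ weights $\{\om_j\}_{0\le j<L}$ on $L$ consecutive blocks of width $1/L$. If $\mu_1(\kappa)$ and $v$ are the ground state eigenvalue and landscape of $-\partial_y^2+\kappa\tilde V$ on $(0,1)$, then $\lambda_1=\mu_1(\kappa)/L^2$ and $\max u=L^2\max v$, so $\lambda_1/\min\frac1u=\mu_1(\kappa)\max v$ depends only on $\kappa$ and the realization on $(0,1)$. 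Thus the three assertions correspond to $\kappa$ fixed (interior $r\in(1,\pi^2/8)$), $\kappa\to0$ (endpoint $r=\pi^2/8$, i.e.\ \eqref{eq:deloc-0}), and $\kappa\to\infty$ slowly (endpoint $r=1$, i.e.\ \eqref{eq:deloc-1}). Everywhere I use the universal bound $\mu_1(\kappa)\max v\ge1$ from \eqref{eq:Vogt}: pairing $Hu=1$ with the positive ground state $\psi_1$ gives $\langle u,\psi_1\rangle=\|\psi_1\|_1/\lambda_1\le\|u\|_\infty\|\psi_1\|_1$.

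I treat fixed $\kappa$ and \eqref{eq:deloc-0} together. As $L\to\infty$ the block width $1/L\to0$ and $\|\kappa\tilde V\|_\infty$ stays bounded, so standard one‑dimensional random homogenization applies: a.s.\ (via $\frac1L\sum_j\om_j\to\bar\om:=\E[\om]$ and mesoscopic concentration) $-\partial_y^2+\kappa\tilde V$ homogenizes to $-\partial_y^2+\kappa\bar\om$ on $(0,1)$, so $\mu_1(\kappa)\to\pi^2+\kappa\bar\om$ and $\max v\to\frac1{\kappa\bar\om}\bigl(1-\operatorname{sech}(\tfrac12\sqrt{\kappa\bar\om})\bigr)$, the landscape maximum of the limiting constant‑coefficient operator. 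Hence, a.s.,
\[
\frac{\lambda_1}{\min\frac1u}\ \longrightarrow\ g(\kappa\bar\om),\qquad g(s):=\Bigl(1+\frac{\pi^2}{s}\Bigr)\Bigl(1-\operatorname{sech}(\tfrac12\sqrt s)\Bigr).
\]
The function $g$ is continuous on $(0,\infty)$ with $g(0^+)=\pi^2/8$ (Taylor expand $1-\operatorname{sech}$) and $g(\infty)=1$, so by the intermediate value theorem it attains every value in $(1,\pi^2/8)$. Given such an $r$, pick $s_r$ with $g(s_r)=r$ and set $k=k(r,L):=s_r/(\bar\om L^2)$ so that $\kappa\bar\om\equiv s_r$; this gives \eqref{eq:ratio-any} for interior $r$, while any $k(L)$ with $kL^2\to0$ gives $g(\kappa\bar\om)\to g(0^+)=\pi^2/8$, which is \eqref{eq:weak2} (in fact deterministically, since then $\|\kappa\tilde V\|_\infty\to0$ and one compares directly with the free operator). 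When $\P(\om=0)>0$ one extra remark is needed: the longest run of zero blocks among the first $L$ is a.s.\ $O(\log L)$, so the associated flat well has width $O(\log L/L)\to0$ and is kinetically too narrow to trap a low‑lying eigenstate and too short to raise $\max v$, hence does not affect the homogenized limit.

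For \eqref{eq:deloc-1}, $\kappa=kL^2\to\infty$ with $\kappa=O(L^\beta)$, $\beta<1/4$; the goal is $\mu_1(\kappa)\max v\to1$, and by the universal lower bound it suffices to show $\mu_1(\kappa)\max v\le1+o(1)$ a.s. For the eigenvalue, the trial function $\sqrt2\sin(\pi y)$ gives $\mu_1(\kappa)\le\pi^2+2\kappa\int_0^1\tilde V\sin^2(\pi y)\,dy=\pi^2+\kappa(\bar\om+o(1))$ a.s., since $\int_0^1\tilde V=\frac1L\sum_j\om_j\to\bar\om$ and $\int_0^1\tilde V\cos(2\pi y)\,dy\to0$ (mean zero, variance $O(1/L)$, Borel--Cantelli). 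For the landscape, rescale once more, $z=\sqrt\kappa\,y$: then $\kappa v$ is the landscape of $-\partial_z^2+\tilde W$ on $(0,\sqrt\kappa)$, where $\tilde W(z)=\tilde V(z/\sqrt\kappa)$ has blocks of width $\eta:=\sqrt\kappa/L\to0$ on a domain of length $\sqrt\kappa\to\infty$. Homogenization on this growing domain gives that this landscape converges uniformly to that of $-\partial_z^2+\bar\om$ on $(0,\sqrt\kappa)$, whose maximum is $\frac1{\bar\om}\bigl(1-\operatorname{sech}(\tfrac12\sqrt{\bar\om\kappa})\bigr)\to\frac1{\bar\om}$; hence $\max v\le\frac1{\kappa\bar\om}(1+o(1))$ and the product is $\le1+o(1)$. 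The quantitative inputs are: (i) Hoeffding's inequality plus a union bound over the $O(\sqrt\kappa/\ell)$ mesoscopic windows and Borel--Cantelli over $L$, giving $\frac1\ell\int_I\tilde W\in[(1-\varepsilon)\bar\om,(1+\varepsilon)\bar\om]$ on every such window a.s.\ eventually; and (ii) the longest low‑potential run being $O(\log L)$ blocks, i.e.\ of $z$‑length $O(\sqrt\kappa\log L/L)\to0$. The restriction $\beta<1/4$ is what forces all homogenization error terms — powers of $\kappa$ times fluctuations of size $O(L^{-1/2})$ — to vanish. Finally $k(L)=L^{-2}\log L$ (so $\kappa=\log L\to\infty$ and $kL^{2-\beta}=L^{-\beta}\log L$ is bounded) gives the endpoint $r=1$ in \eqref{eq:ratio-any}.

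The main obstacle is the sharp $L^\infty$ control of the landscape when $\kappa\to\infty$. Every soft estimate I know — comparison with the free landscape, the energy identity $\int_0^1 v\le1/\mu_1$, the Sobolev bound $\|v\|_\infty\lesssim\|v'\|_{2}$, or the spectral expansion of $v$ — only yields $\max v\lesssim\kappa^{-1/2}$, hence $\mu_1(\kappa)\max v\lesssim\kappa^{1/2}\to\infty$, which is useless; recovering the correct $\max v\sim(\kappa\bar\om)^{-1}$ genuinely needs the quantitative homogenization estimate and is precisely where the growth restriction on $kL^2$ enters.
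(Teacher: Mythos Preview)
Your approach is essentially the paper's: rescale to $(0,1)$, homogenize $kL^2\tilde V$ to the constant $\gamma_{\rm c}=kL^2\E(\om)$, compute $\lambda_1$ and $\max u$ explicitly for the limiting operator $-\partial_y^2+\gamma_{\rm c}$, and observe that the resulting ratio $g(s)=(1+\pi^2/s)(1-\operatorname{sech}(\tfrac12\sqrt s))$ is continuous with $g(0^+)=\pi^2/8$ and $g(\infty)=1$, so the intermediate value theorem gives \eqref{eq:ratio-any}. Your $g$ is exactly the paper's $R(\gamma_{\rm c})$.

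The one substantive difference is packaging. The paper proves a single quantitative homogenization estimate (their Theorem~\ref{thm:33}) valid for all $\gamma_{\rm c}$ with $1+\gamma_{\rm c}\lesssim L^\beta$, $\beta<1/4$, and reads off all three cases of Theorem~\ref{thm:3} from it. The mechanism is a Neumann series for $H_L^{-1}=(H_{\rm c}+\tilde V_\om)^{-1}$ around $H_{\rm c}^{-1}$, where each term is controlled in $H^1$ (hence $L^\infty$, by 1D Sobolev embedding) by $(1+\sqrt{\gamma_{\rm c}})\|F\|_2$ with $F(x)=\int_0^x(\tilde V_\om-\E\om)$; Hoeffding then gives $\|F\|_2\lesssim\gamma_{\rm c} L^{-a}$ a.s., and the restriction $\beta<1/4$ is exactly what makes the product $(1+\gamma_{\rm c})^2L^{-a}$ vanish. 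This delivers the sharp $\max v\sim 1/\gamma_{\rm c}$ directly, resolving the obstacle you flag in your last paragraph without the second rescaling $z=\sqrt\kappa\,y$. Your two-step treatment of the $\kappa\to\infty$ endpoint is a correct alternative, but the homogenization on the growing domain $(0,\sqrt\kappa)$ still needs precisely the same quantitative input (Hoeffding on mesoscopic windows, Borel--Cantelli), so nothing is gained; the paper's unified estimate on $(0,1)$ is cleaner. Your remark about the longest zero run being $O(\log L)$ is not needed in either approach: the $L^2$ bound on the antiderivative $F$ already absorbs any local flat spots.
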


Notice in the special case \eqref{eq:weak2}, we obtained the same limit as in \eqref{eq:main-pi28}, without the singular assumption \eqref{eq:sing-F} on $\om$. The assumption \eqref{eq:deloc-0} is equivalent to the smallness condition on the disorder strength $k\ll L^{-2}\ll 1$. The limit \eqref{eq:weak2} is very natural since $H=-\Delta+kV_{\om}$ is now small perturbation of the negative free Laplacian $-\Delta$, as in \eqref{eq:free}.  We will give quantitative estimates for $\lambda_1$ and $u$ separately in Section \ref{sec:general}. Theorem \ref{thm:3} be proved as a direct consequence of those estimates. We also note that Theorem \ref{thm:3} is the only result that requires $\omega$ to be bounded from above,  and therefore is in a finite range. There are no such restrictions for all the other results Theorem \ref{thm:1},\ref{thm:2} and Corollary \ref{cor:Ber}. The main technical reason is that in the proof of Theorem \ref{thm:3} we used Chernoff–Hoeffding's inequality (see equation \eqref{eq:PEL}) for random variables in a finite range. This restriction for Theorem \ref{thm:3} might be relaxed by other methods, but we do not plan to dig into this in the current paper.

\section{Proof of the main results}\label{sec:proof}

\subsection{Preliminaries for the landscape function}\label{sec:pre}
The localization landscape function was introduced in \cite{FM} for a large class of differential operators under mild conditions. Most of the basic properties such as existence or positivity of the landscape function have been well established in the previous work \cite{FM,ADFJM-CPDE,ADFJM-SIAM}. 
 In this part, we summarize some of these properties for the very special case in which the one dimensional Schr\"odinger operator  
 \[-\Delta+V,\ \ V(x)\ge0,\]
 acts on functions defined on $\Omega=(0,L)$, with  Dirichlet boundary conditions. In general, $-\Delta+V$ should be understood in the weak sense, acting on $H^1_0(\Omega)$, either for the eigenvalue problem $(-\Delta+V)\phi=\lambda\phi$, or a boundary value problem $(-\Delta+V)u=1$. Recall $H^1(\Omega)$ is the usual Sobolev space given by the closure of $C^1(\Omega)$ functions, and $H^1_0(\Omega)$  is the closure of the subspace $C^1_0(\Omega)$ of continuously differentiable functions that are compactly supported in $\Omega$.  We also note that for problems on the 1-d interval $\Omega$, the weak solution of $(-\Delta+V)u=1$ coincides with the classical solution. In particular, we have the following results taken from \cite{FM,ADFJM-CPDE}. 
\begin{proposition}
Let $V$ be a bounded nonegative potential. Then there exists a unique  continuous  solution $u$ of $(-\Delta+V)u=1$, with  Dirichlet boundary conditions $u(0)=u(L)$. Moreover, $u$ is strictly positive on $(0,L)$.
\end{proposition}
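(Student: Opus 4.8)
The plan is to establish existence, uniqueness, continuity, and positivity for the one-dimensional boundary value problem $-u'' + Vu = 1$ on $\Omega = (0,L)$ with $u(0) = u(L) = 0$, using elementary ODE and Sobolev-space arguments rather than the general elliptic machinery of \cite{FM}. Since the paper only needs the one-dimensional case, I would set up the weak formulation on $H^1_0(\Omega)$ and use the Lax–Milgram theorem. Define the bilinear form $a(v,w) = \int_0^L v'w' + \int_0^L V v w$; since $V \ge 0$ and $V \in L^\infty$, this form is bounded on $H^1_0(\Omega)$, and it is coercive because of the Poincar\'e inequality, $\int_0^L |v'|^2 \ge c_L \|v\|_{H^1_0}^2$, with the potential term only helping since $V \ge 0$. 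The linear functional $w \mapsto \int_0^L w$ is bounded on $H^1_0(\Omega)$, so Lax–Milgram yields a unique weak solution $u \in H^1_0(\Omega)$.

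Next I would upgrade the regularity. In one dimension, $u \in H^1_0(\Omega)$ is automatically continuous on $[0,L]$ (by the Sobolev embedding $H^1 \hookrightarrow C^{0,1/2}$ in $d=1$), which immediately gives the continuity claim and makes sense of the boundary conditions pointwise. Then, reading the equation as $u'' = Vu - 1$ in the distributional sense, the right-hand side $Vu - 1$ lies in $L^\infty(\Omega) \subset L^2(\Omega)$, so $u'' \in L^2$, hence $u \in H^2(\Omega)$ and $u' $ is continuous; thus $u$ is a classical ($C^1$ with absolutely continuous derivative) solution, which is the remark the paper makes about weak and classical solutions coinciding in 1-d.

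Finally, for strict positivity I would invoke the maximum principle, which in 1-d is elementary: suppose $u$ attains a nonpositive minimum at an interior point $x_0 \in (0,L)$. Near $x_0$ one has $u(x_0) \le 0$, so $u''(x_0) = V(x_0)u(x_0) - 1 \le -1 < 0$ wherever this holds, contradicting that $x_0$ is an interior minimum. A cleaner route avoiding pointwise second-derivative subtleties (since $u''$ is only $L^\infty$): if $m = \min_{[0,L]} u < 0$ at an interior point, then on the set $\{u < 0\}$ we have $-u'' = 1 - Vu \ge 1 > 0$, so $u$ is strictly concave there, but a strictly concave function cannot attain an interior minimum on a subinterval whose endpoints have $u = 0$ — contradiction; and $u \equiv 0$ is impossible since it does not solve the equation. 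This shows $u > 0$ on $(0,L)$.

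The only mild obstacle is handling the regularity bootstrap and the maximum principle carefully given that $V$ is merely bounded (indeed, for the piecewise-constant potential \eqref{eq:V}, $V$ is discontinuous), so $u$ is $C^1$ but not $C^2$ across the jumps of $V$; this is why I would phrase the positivity argument via concavity on the open set $\{u<0\}$ rather than via a pointwise second-derivative test. Everything else is standard, and since the statement is quoted verbatim from \cite{FM,ADFJM-CPDE}, one could alternatively just cite those references, but the one-dimensional self-contained argument above is short enough to include.
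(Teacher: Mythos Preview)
The paper does not prove this proposition in detail; it attributes the statement to \cite{FM,ADFJM-CPDE} and then remarks that existence and uniqueness follow from invertibility of $-\Delta+V$ (all Dirichlet eigenvalues are strictly positive since $V\ge 0$) and that positivity is a consequence of the maximum principle recorded as Lemma~\ref{lem:pre1}, itself cited from \cite{GT,GP}. Your Lax--Milgram argument together with the 1-d Sobolev bootstrap is a more explicit, self-contained rendering of exactly this outline, and it is correct.

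One small gap in your strict-positivity step: your concavity argument on $\{u<0\}$ rules out $\min u<0$, and you separately exclude $u\equiv 0$, but those two facts together do not yet preclude $u\ge 0$ with $u(x_0)=0$ at an isolated interior point $x_0\in(0,L)$. The same idea closes it: since $u\in C^1$ and $x_0$ is a minimum, $u'(x_0)=0$; on a neighborhood of $x_0$ where $u$ is small enough that $Vu<\tfrac12$, one has $u''=Vu-1<-\tfrac12$, so $u$ is strictly concave there with $u(x_0)=u'(x_0)=0$, forcing $u(x)<0$ for $x\neq x_0$ nearby --- a contradiction. This is just the 1-d strong maximum principle, which the paper is implicitly invoking through its citation of \cite{GT,GP}.
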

Note that if $V$ is a nonegative potential, then all eigenvalues of $-\Delta+V$ on $H^1_0(\Omega)$ are strictly positive. Therefore, $-\Delta+V$ is invertible and $u$ can be solved directly as $u=(-\Delta+V)^{-1}1$. 
The positivity of the landscape function $u$ is actually a direct consequence of  the  following maximum principle which is a standard result for differential operators, can be found in e.g. \cite{GT,GP}.
\begin{lemma}\label{lem:pre1}
Suppose $V\ge0$, and $(-\Delta+V)f\ge 0$ weakly on $\Omega$. Then \[\min_{\Omega} f\ge \min\{0, f(0),f(L)\}.\] In particular, if $f(0)=f(L)\ge 0$, then $f\ge 0$ on $\Omega$. 
\end{lemma}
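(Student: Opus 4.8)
Write $c:=\min\{0,f(0),f(L)\}$, so that the assertion is simply $f\ge c$ on $\Omega$, the ``in particular'' being the special case $f(0)=f(L)\ge 0$ in which $c=0$. The weak hypothesis means $f\in H^1(\Omega)$ and $\int_\Omega(f'\phi'+Vf\phi)\ge 0$ for every nonnegative $\phi\in C^1_0(\Omega)$; since in one dimension $H^1(\Omega)\hookrightarrow C([0,L])$, $f$ is continuous up to the boundary, the boundary values make sense, and the open sublevel set $U:=\{x\in(0,L):f(x)<c\}$ is well defined with $f\equiv c$ on $\partial U$ by continuity (using $f(0),f(L)\ge c$ at any endpoint of $\Omega$ that lies in $\overline U$). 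The plan is to show $U=\emptyset$ by inserting one well-chosen nonnegative test function into the weak inequality.

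The test function I would use is $-w$, where $w:=\min\{f-c,\,0\}\le 0$. First I would record the routine facts that $w\in H^1_0(\Omega)$ (it vanishes at both endpoints precisely because $f(0),f(L)\ge c$), that $w=f-c$ and $w'=f'$ a.e.\ on $U$, and that $w=0$, $w'=0$ a.e.\ off $U$. Next I would extend the weak inequality from $\phi\in C^1_0(\Omega)$, $\phi\ge 0$, to all nonnegative $\phi\in H^1_0(\Omega)$ by density; this is legitimate because $f'\in L^2$ and $Vf\in L^2$ (boundedness of $V$ is far more than enough), so the relevant functional is continuous on $H^1_0(\Omega)$. Applying the extended inequality to $\phi=-w$ and using the identities above collapses everything to an integral over $U$:
\[
0\ \le\ \int_\Omega\big(f'(-w)'+Vf(-w)\big)\ =\ \int_U\big(-|f'|^2+Vf\,(c-f)\big),
\qquad\text{i.e.}\qquad \int_U|f'|^2\ \le\ \int_U Vf\,(c-f).
\]

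The sign bookkeeping then finishes the argument: on $U$ one has $f<c\le 0$ (hence $f<0$), $c-f>0$, and $V\ge 0$, so the right-hand side is $\le 0$; therefore $f'\equiv 0$ a.e.\ on $U$, so $f$, being absolutely continuous, is constant on each connected component of $U$, which forces that constant to equal the boundary value $c$ and contradicts $f<c$ inside $U$ unless $U=\emptyset$. I expect the only genuinely non-cosmetic point to be the density extension to nonnegative $H^1_0$ test functions together with the truncation calculus for $w=\min\{f-c,0\}$ (the chain rule giving $w'=f'\mathbf 1_U$ a.e.\ and the membership $w\in H^1_0(\Omega)$); all of this is classical in one dimension, and as an alternative one can simply quote the weak maximum principle in the form stated in \cite{GT}.
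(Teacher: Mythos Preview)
Your argument is correct and is the standard weak maximum principle proof (test against the negative part of $f-c$). The paper itself does not prove this lemma at all; it merely states it as ``a standard result for differential operators, can be found in e.g.\ \cite{GT,GP}'' --- which is precisely the alternative you suggest in your final sentence.
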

We will also frequently use this maximum principle to compare landscape functions of two Schr\"odinger operators with different potentials.
\begin{lemma}\label{lem:pre2}
Let $V_1\ge V_2\ge 0$ be two potentials on $\Omega$. Suppose $(-\Delta+V_1)u_1=1$, $(-\Delta+V_2)u_2=1$ on $\Omega$, and $0\le u_1(0)\le u_2(0)$, \ $0\le u_1(L)\le u_2(L)$. Then $u_1\le u_2$ on $\Omega$. 
\end{lemma}
\begin{proof}
It is enough to verify that $1=(-\Delta+V_1)u_1=(-\Delta+V_2)u_2$
implies
\[(-\Delta+V_2)(u_2-u_1)=(V_1-V_2)u_1\ge 0.\]
\end{proof}
On the other hand, the ground state eigenvalues of two Schr\"odinger operators can be compared using the min-max principle:
\begin{lemma}\label{lem:pre3}
Let $V_1\ge V_2\ge 0$ be two potentials on $\Omega$. Let $\lambda_1,\lambda_2$ be the smallest eigenvalue of the associated Schr\"odinger operators $-\Delta+V_1,-\Delta+V_2$, respectively. Then $\lambda_1 \ge \lambda_2$. 
\end{lemma}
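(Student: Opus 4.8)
The plan is to prove this through the Courant--Fischer variational (min--max) characterization of the ground state energy, exactly as the statement of the lemma suggests. First I would recall the standard functional-analytic setup: for a bounded nonnegative potential $V$ on the bounded interval $\Omega=(0,L)$, the Dirichlet realization of $-\Delta+V$ is the self-adjoint operator generated by the closed nonnegative quadratic form $Q_V(\phi)=\int_\Omega\bigl(|\phi'|^2+V|\phi|^2\bigr)\,dx$ with form domain $H^1_0(\Omega)$; since the embedding $H^1_0(\Omega)\hookrightarrow L^2(\Omega)$ is compact, this operator has compact resolvent, its spectrum is a discrete increasing sequence of (here strictly positive) eigenvalues, and the smallest one is
\[
\lambda(V)=\inf_{\,0\neq\phi\in H^1_0(\Omega)}\ \frac{Q_V(\phi)}{\|\phi\|_{L^2(\Omega)}^2},
\]
with the infimum attained at a ground state eigenfunction. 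These facts are classical and already underlie the eigenvalue discussion in Section~\ref{sec:pre}.

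Granting this, the comparison is a one-line monotonicity argument. Because $V_1\ge V_2\ge 0$ pointwise, for every $\phi\in H^1_0(\Omega)$ we have $Q_{V_1}(\phi)-Q_{V_2}(\phi)=\int_\Omega (V_1-V_2)|\phi|^2\,dx\ge 0$, hence $Q_{V_1}(\phi)\ge Q_{V_2}(\phi)$; dividing by $\|\phi\|_{L^2(\Omega)}^2$ and taking the infimum over all nonzero $\phi\in H^1_0(\Omega)$ gives $\lambda_1=\lambda(V_1)\ge\lambda(V_2)=\lambda_2$. Equivalently, if $\phi_1$ denotes a ground state of $-\Delta+V_1$, then since $\phi_1$ is also an admissible trial function for $-\Delta+V_2$,
\[
\lambda_1=\frac{Q_{V_1}(\phi_1)}{\|\phi_1\|_{L^2(\Omega)}^2}=\frac{Q_{V_2}(\phi_1)+\int_\Omega (V_1-V_2)|\phi_1|^2\,dx}{\|\phi_1\|_{L^2(\Omega)}^2}\ \ge\ \frac{Q_{V_2}(\phi_1)}{\|\phi_1\|_{L^2(\Omega)}^2}\ \ge\ \lambda_2 .
\]

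There is essentially no serious obstacle: the only point requiring a little care is the justification of the variational principle and the existence of a minimizer, which is standard elliptic and spectral theory on a bounded interval with Dirichlet conditions and for which I would cite the same references invoked around Lemma~\ref{lem:pre2}. I would also remark that this lemma is the spectral analogue of Lemma~\ref{lem:pre2} (monotonicity of the landscape function in the potential), the latter proved via the maximum principle rather than the min--max principle; together they provide the two comparison tools used repeatedly to sandwich $\lambda_1$ and $u$ between those of simpler reference operators in the proofs of Theorems~\ref{thm:1}--\ref{thm:3}.
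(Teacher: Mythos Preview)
Your proposal is correct and matches the paper's approach exactly: the paper simply states that Lemma~\ref{lem:pre3} follows from the min--max principle and gives no further proof, so your variational argument (comparing the Rayleigh quotients pointwise since $V_1\ge V_2$) is precisely the intended justification.
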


\subsection{Proof of Theorem  \ref{thm:1}} \label{app:loc-pf} \label{sec:Bernoulli}



Let $H=-\Delta+kV_{\om}$ be as in \eqref{eq:H} and  with a nonegative $\om$-piecewise potential $V_{\om}$ as in \eqref{eq:V}. Assume that $\om$ satisfies \eqref{eq:sing-F}.
Throughout the rest of the paper, we denote by $\lambda=\lambda_1$ the ground state eigenvalue of $H$ for simplicity as long as there is no ambiguity. We also denote by $\max u=\max_{x\in \Omega} u$ when it is clear. 

The main work horse of this section is upper and lower bounds for the ground state eigenvalue  and the landscape function for a Bernoulli-piecewise potential (Lemma \ref{lem:Bernoulli-est} below). We will  prove Theorem \ref{thm:1} for general distributions using the estimates for the Bernoulli case. 
We write $\om\sim \Bern(p)$ for $p\in (0,1)$ if the random variable  $\om$ obeys the standard $\{0,1\}$ Bernoulli distribution
$
    \P(\om=0)=p$, $\P(\om=1)=1-p. 
$

We state the main estimates for a Bernoulli-piecewise potential $ V_{\om}^b$ in Lemma \ref{lem:Bernoulli-est} and prove Theorem \ref{thm:1} below. We delay the proof for Lemma \ref{lem:Bernoulli-est} until after the proof of Theorem \ref{thm:1}.

\begin{lemma} \label{lem:Bernoulli-est}
 Given $p>0$, let $\{\om_j\}_{j=1}^L\in \{0,1\}^L$ be any realization of a  Bernoulli trial given by $\Bern  (p)$. For any $b\in [0,\infty]$,   let $V_{\om}^b$ be a piecewise constant potential on $\R$ defined as
\begin{equation}\label{eq:Vb}
     V_{\om}^b(x)=
     b\om_j, \ \  x\in [j-1,j), \ j=1,\cdots,L .
 \end{equation}
Denote by $\lambda $ and $u$ the ground state eigenvalue, and the landscape function respectively for  $-\Delta+V_{\om}^b$ on $L^2(\Omega)$ with  Dirichlet boundary conditions.

Let $\ell_{\max}=\ell_{\max}(\om,L)$ be the longest length of an interval on which $V_{\om}^b=0$. Denote by $S=\max\{\sqrt b, 1\}$.  Then for any $b>0$ and $\ell_{\max}\ge 1$,
\begin{equation}\label{eqn:u-bounds} 
    \frac{\ell_{\max}^2}{8}\le   \max u \leq  \frac{3S\ell_{\max}}{b}+\frac{\ell_{\max}^2}{8}.
\end{equation}
Let $0 \le \nu<1, \gamma < 1$ be fixed. If $b\ell_{\max}^2 >\pi^2$ and  $b^{1-\nu}\ell_{\max}^\gamma >8\pi^2( 1+\sqrt b)$, then 
\begin{equation}
 \frac{\pi^2}{\ell_{\max}^2} \left(1- \frac{1}{b^{\nu/2} \ell_{\max}^{(1-\gamma)/2} } \right)^2  
   \leq  \lambda  
   \leq \frac{\pi^2}{\ell_{\max}^2}   .\label{eqn:lambda-bounds}
\end{equation}

In particular, if $b=\infty$, then 
\begin{equation}\label{eq:k-infty}
    \max u (x)=\frac{\ell_{\max}^2}{8}, \ \ {\rm and} \ \  \lambda    =\frac{\pi^2}{\ell_{\max}^2}. 
\end{equation}
\end{lemma}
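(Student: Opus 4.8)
The plan is to dispatch the three assertions of Lemma~\ref{lem:Bernoulli-est} separately. The two‑sided bound on $\max u$ comes from the maximum principle (Lemma~\ref{lem:pre1}) applied to explicit sub‑ and super‑solutions; the upper bound on $\lambda$ is a one‑line variational test on a longest zero‑interval; and the lower bound on $\lambda$, the only substantial point, is a Poincar\'e‑type inequality obtained by localising to the connected components of $\{V_\om^b=0\}$ and $\{V_\om^b=b\}$. The case $b=\infty$ is the decoupled limit ($-\Delta$ with Dirichlet data on each maximal zero‑interval) and is either read off directly or obtained by letting $b\to\infty$ in the finite‑$b$ bounds, since then $3S\ell_{\max}/b=3\ell_{\max}/\sqrt b\to0$ and $b^{-\nu/2}\ell_{\max}^{-(1-\gamma)/2}\to0$.

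For the lower bound on $\max u$, let $I=[a,a+\ell_{\max}]$ be a longest zero‑interval and compare $u$ with $w(x)=\tfrac12(x-a)(a+\ell_{\max}-x)$ on $I$: since $-w''=1=-u''$ on $I$, the difference $u-w$ is affine on $I$ and nonnegative at $\partial I$ (as $u\ge0$), so $u\ge w$ on $I$ and $\max u\ge w(a+\tfrac{\ell_{\max}}2)=\ell_{\max}^2/8$. For the upper bound I would build a supersolution $\bar u$ with $(-\Delta+V_\om^b)\bar u\ge1$: on a zero‑interval $[a',a'+\ell]$ put the quadratic $c_0+\tfrac12(x-a')(a'+\ell-x)$ with $c_0:=3S\ell_{\max}/b$, and on a $b$‑block of length $m$ put $\tfrac1b+w$, where $w$ is the symmetric solution of $-w''+bw=0$ with boundary value $c_0-\tfrac1b>0$. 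On each piece $(-\Delta+V_\om^b)\bar u=1$, and at every junction $\bar u$ has a concave corner (so $-\bar u''$ gains a positive point mass) precisely when $(c_0-\tfrac1b)\sqrt b\tanh(\tfrac{\sqrt b}2)\ge\tfrac{\ell_{\max}}2$; using $\ell_{\max}\ge1$ and $S\ge1$ this reduces to $4S\tanh(\tfrac{\sqrt b}2)\ge\sqrt b$, which holds for $b\ge1$ because $\tanh(\tfrac12)>\tfrac14$ and for $b<1$ because $\tanh t\ge t-\tfrac{t^3}3\ge\tfrac{11}{12}t$ on $[0,\tfrac12]$. A terminal $b$‑block (if the domain begins or ends with one) is treated with a Dirichlet endpoint in place of a corner. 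Since $\bar u\ge0=u$ at $0,L$, Lemma~\ref{lem:pre1} gives $u\le\bar u$, and $\max\bar u=c_0+\ell_{\max}^2/8$, which is \eqref{eqn:u-bounds}.

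The upper bound $\lambda\le\pi^2/\ell_{\max}^2$ follows by inserting $\phi_0=\sin\!\big(\tfrac{\pi(x-a)}{\ell_{\max}}\big)\mathbf 1_I\in H^1_0(\Om)$ into the Rayleigh quotient: $V_\om^b\phi_0^2\equiv0$, so $R[\phi_0]=\pi^2/\ell_{\max}^2$. For the lower bound set $\epsilon=b^{-\nu/2}\ell_{\max}^{-(1-\gamma)/2}$ and $\mu=\tfrac{\pi^2}{\ell_{\max}^2}(1-\epsilon)^2$, so $\mu\le\pi^2/\ell_{\max}^2<b$ by the first hypothesis; I would prove $\int_\Om(|\phi'|^2+V_\om^b\phi^2)\ge\mu\int_\Om\phi^2$ for all $\phi\in H^1_0(\Om)$, which gives $\lambda\ge\mu$. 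Split the left side over the maximal zero‑intervals $\{Z_i\}$ and $b$‑blocks $\{B_j\}$; on each $B_j$ write $\int_{B_j}(|\phi'|^2+b\phi^2)=\mu\int_{B_j}\phi^2+\int_{B_j}(|\phi'|^2+(b-\mu)\phi^2)$, cut $B_j$ at its midpoint, and apply to each half the elementary estimate $\int_0^p(|\psi'|^2+c\psi^2)\,dx\ge\sqrt c\tanh(\sqrt c\,p)\,\psi(0)^2$ (minimiser $\psi(0)\cosh(\sqrt c(p-\cdot))/\cosh(\sqrt c\,p)$) with $c=b-\mu$, $p=|B_j|/2\ge\tfrac12$, and $\psi(0)$ the trace of $\phi$ at the endpoint of $B_j$ meeting the adjacent zero‑interval. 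Summing and regrouping the boundary terms so that each $Z_i$ collects $\kappa\phi^2$ at each of its two endpoints, with $\kappa=\sqrt{b-\mu}\tanh(\tfrac{\sqrt{b-\mu}}2)$ and a vanishing contribution wherever $Z_i$ abuts $0$ or $L$, yields
\[
\int_\Om(|\phi'|^2+V_\om^b\phi^2)\ \ge\ \mu\sum_j\int_{B_j}\phi^2\ +\ \sum_i\Big(\int_{Z_i}|\phi'|^2+\kappa\,\phi^2\big|_{\partial Z_i}\Big).
\]

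Each $Z_i$‑summand is the Robin form of $-\Delta$ on $Z_i$ (length $\ell_i\le\ell_{\max}$), hence $\ge E_{\mathrm{Rob}}(\ell_i,\kappa)\int_{Z_i}\phi^2$; from the Robin eigenvalue equation $\tan(\tfrac{\omega\ell}2)=\tfrac\kappa\omega$ and $\arctan t\le t$ one gets $\omega\ge\pi/(\ell+2/\kappa)$, i.e. $E_{\mathrm{Rob}}(\ell,\kappa)\ge\pi^2/(\ell+2/\kappa)^2$ (for the two endpoints of the domain, where $\phi=0$, the bound is the same or better). Thus the right side above is $\ge\min\!\big(\mu,\ \pi^2/(\ell_{\max}+2/\kappa)^2\big)\int_\Om\phi^2$, and it remains to verify $\pi^2/(\ell_{\max}+2/\kappa)^2\ge\mu$, equivalently $2/\kappa\le\epsilon\ell_{\max}/(1-\epsilon)$. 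Since $\mu\le\pi^2/\ell_{\max}^2$ is small, $\kappa$ is comparable to $\sqrt b\tanh(\tfrac{\sqrt b}2)$, and this last inequality is what the two hypotheses are calibrated to provide: $b\ell_{\max}^2>\pi^2$ keeps $\mu<b$ (so the trace step runs and $\kappa$ is well defined), while $b^{1-\nu}\ell_{\max}^\gamma>8\pi^2(1+\sqrt b)$ forces the tunnelling length $1/\kappa$ to be small compared with $\epsilon\ell_{\max}$. I expect this final calibration — assembling the $b$‑block trace estimates into a clean global Poincar\'e inequality and then checking, uniformly over small and large $b$ and over $\ell_{\max}\ge1$, that the error produced is dominated by the stated $\epsilon=b^{-\nu/2}\ell_{\max}^{-(1-\gamma)/2}$ — to be the main obstacle; the bounds \eqref{eqn:lambda-bounds} are sharp enough that both hypotheses are genuinely used here.
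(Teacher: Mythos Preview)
Your sub-solution argument for the lower bound on $\max u$ and your variational test for $\lambda\le\pi^2/\ell_{\max}^2$ are the paper's. For the upper bound on $\max u$ your construction differs: the paper builds, for each well $I_i$, a $C^1$ bump $\sigma_i$ that is a downward parabola on $I_i$ flanked by quadratic tails of width $1/S$, and takes $\tilde u=(1+S\ell_{\max})/b+\sum_i\sigma_i$, checking that the constant absorbs the negative contribution $-\tfrac12 SL_i$ produced by each tail. Your exact parabola/$\cosh$ construction glued with concave corners is a legitimate distributional super-solution and reaches the same bound by a somewhat cleaner route.

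The lower bound for $\lambda$ is where you genuinely diverge. The paper does \emph{not} argue via a form inequality; it works directly with the normalised ground state $\psi$. From $\lambda\le\pi^2/\ell_{\max}^2$ it reads off $\|\psi\|^2_{L^2(\{V=b\})}\le\pi^2/(b\ell_{\max}^2)$, and an explicit $\cosh$/$\sinh$ computation on each wall yields $\max(\psi(l_i),\psi(r_i))^2\le4(1+\sqrt b)\,\|\psi\|^2_{L^2(\text{adjacent wall})}$. A well $I_i$ is declared \emph{heavy} when its scaled boundary traces $\psi(\partial I_i)/\|\psi\|_{L^2(I_i)}$ fall below $(L_i\,b^{\nu}\ell_{\max}^{1-\gamma})^{-1/2}$; combining the two estimates shows the light wells carry total mass at most $8\pi^2(1+\sqrt b)/(b^{1-\nu}\ell_{\max}^{\gamma})$, so the second hypothesis forces a heavy well to exist, and on it the sine form of $\psi$ gives $T\sqrt\lambda/\pi\ge1-\epsilon$ directly. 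Your Robin decomposition is a fully variational alternative, and buys you a spectral lower bound for the whole quadratic form rather than just its minimiser; the paper's route is more hands-on but makes the origin of the constant $8\pi^2(1+\sqrt b)$ transparent.

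The gap you flag is real. The constant $8\pi^2(1+\sqrt b)$ in the second hypothesis is calibrated to the paper's wall estimate, not to your requirement $2/\kappa\le\epsilon\ell_{\max}/(1-\epsilon)$. In the useful regime $\epsilon<1$ the first hypothesis gives $\mu<b$, and $\tanh t\ge t/(1+t)$ yields $\kappa\ge(b-\mu)/(2+\sqrt{b-\mu})$; for the parameter choices the paper actually uses ($\nu\in\{0,1/4\}$, $\gamma\in\{0,1/2\}$) one can then check that the second hypothesis forces $b-\mu\ge b/2$ and that your inequality follows. But this has to be written out, and for arbitrary $0\le\nu<1$, $\gamma<1$ it is not obvious that the paper's hypothesis is precisely what your Robin argument needs --- as stated your proof stops one line short.
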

\begin{remark}
The estimates of Lemma \ref{lem:Bernoulli-est} are deterministic and hold for any realization of the random potential. 
\end{remark}

\begin{proof}[Proof of Theorem \ref{thm:1}]
Let $V_{\om}$ be a $\om$-piecewise potential on $[0, L]$ be as in Theorem \ref{thm:1}. 
For any $\eps\ge 0$, let 
\begin{equation*}
    p_\eps=\P(\om \le \eps).
\end{equation*}
 Notice that $p_\eps\ge p_0\in (0,1)$ as assumed in \eqref{eq:sing-F}. An $\eps$-well 
is an interval $I\subset [0,L]$ with the longest length such that $V_{\om}(x)\le \eps$ for $x\in I$. We denote its length by $T_\eps$.   
 Let $L_{\max}$ be as in Theorem \ref{thm:1} for the longest length of a zero well for $V_{\om}$. We see that  $T_0=L_{\max}=\ell_{\max}$ as in Lemma \ref{lem:Bernoulli-est} and
 \begin{equation}\label{eq:Teps}
     \lim_{\eps\to0}T_\eps=L_{\max}.
 \end{equation} Moreover, let 
\begin{equation*}
    \eta^\eps_j=\begin{cases}
   0, \ \ & {\rm if}\ \ \om_j\le \eps \\
   1, \ \ & {\rm if} \ \ \om_j>\eps 
    \end{cases} . 
\end{equation*}
Then $\{\eta^\eps_j\}_{j=1}^L$ is a Bernoulli trail given by $\Bern(p_\eps)$, and $L_{\max}$ equals the longest length of a set of consecutive points $j\in\{1,\cdots,L\}$ such that $\eta^0_j=0$.  Given $p_0\in (0,1)$, \cite{Bishop} proved that
\begin{equation*}
    L_{\max}\to \infty \ \ {\rm as} \ \ L\to \infty
\end{equation*}
with probability one. 


 Let $I_{\max}=(a_1,b_1)\subset [0,L]$ be an interval of zero wells with maximum length $b_1-a_1=L_{\max}$.
Let $V^\infty_\om$ be defined as in \eqref{eq:Vb} for a trial so that $V^\infty_\om=0$ on $I_{\max}$ and $V^\infty_\om=\infty$ otherwise. The ground state eigenvalue $\lambda^\infty$ of $-\Delta+V^\infty_\om$ satisfies \eqref{eq:k-infty} with $\ell_{\max}=L_{\max}$. Since $V^\infty_\om \ge kV$ for any $k$, by the min-max principle and Lemma \ref{lem:pre3}, one has
\begin{equation}\label{eq:pre0}
    \lambda\le \lambda^\infty= \frac{\pi^2}{L_{\max}^2}.
\end{equation}
On the other hand, let $u^\infty$ be the landscape function of $-\Delta+V^\infty_\om$,  \eqref{eq:k-infty} implies $\max u^\infty(x)=  \dfrac{1}{8}L_{\max}^2.$  Note in this case, $u^\infty(x)= \dfrac{1}{2}(x-a_1)(b_1-x)$ on $I_{\max}$ which gives this maximum. 
Then by  $V^\infty_\om \ge kV$ and Lemma \ref{lem:pre2}, 
\begin{equation}\label{eq:tmp53}
 \max u(x)\ge \max u^\infty(x)=  \frac{L_{\max}^2}{8}. 
\end{equation}


Next, we estimate $\lambda$ from below and $u$ from above by constructing anther potential smaller than $V_{\om}$. Let $k$ satisfy \eqref{eq:k-beta}  and 
\begin{equation}\label{eq:eps-Lmax}
    \eps=\eps(L)={L_{\max}^{-\alpha/2}}.
\end{equation}
for $\alpha>0$ given as in \eqref{eq:k-beta}.  
For $k>0$, we define $V_{\om}^{k\eps}$ as in \eqref{eq:Vb} with $b=k\eps$. It is easy to verify that $V_{\om}^{k\eps}\le kV_{\om}$ for all $k>0$. Then by Lemma \ref{lem:pre2} and \ref{lem:pre3}, 
\begin{equation*}
    \lambda\ge \lambda^{k\eps}, \ \ {\rm and} \ \ \max u(x)\le \max u^{k\eps}(x),
\end{equation*}
where $\lambda^{k\eps}$ and $u^{k\eps}(x)$ are the first eigenvalue  and the landscape function of $-\Delta+V_{\om}^{k\eps}$ respectively. 
Suppose $b=k\eps\ge1$. Using the same the notation as Lemma \ref{lem:Bernoulli-est}, we see that $S=\max\{1,\sqrt {k\eps}\}=\sqrt{k\eps}$ and 
\begin{equation*}
    \max u^{k\eps}(x)\le  \frac{3T_\eps}{\sqrt{k\eps}}+\frac{T_\eps^2}{8}\le  \frac{T_\eps^2}{8}\left(1+\frac{24}{T_\eps}\right)\le \frac{T_\eps^2}{8}\left(1+\frac{24}{L_{\max}}\right).
\end{equation*}
Together with the upper bound of $\lambda$ in \eqref{eq:pre0}, one has 
\begin{equation}\label{eq:tempu1}
    \lambda \max u(x)\le \frac{\pi^2}{8}\frac{T_\eps^2}{L_{\max}^2}\left(1+\frac{24}{L_{\max}}\right).
\end{equation}
Suppose $b=k\eps<1$. Note that \eqref{eq:k-beta} and \eqref{eq:eps-Lmax} implies $k\eps>CL_{\max}^{\alpha/2-1}$.   We obtain from Lemma \ref{lem:Bernoulli-est}  
\begin{multline*}
 \max u^{k\eps}(x)\le  \frac{3T_\eps}{k\eps}+\frac{T_\eps^2}{8}\le  \frac{T_\eps^2}{8}\left(1+\frac{24}{k\eps T_\eps}\right)\\
 \le \frac{T_\eps^2}{8}\left(1+\frac{24}{CL_{\max}^{\alpha/2-1}L_{\max}}\right)\le \frac{T_\eps^2}{8}\left(1+\frac{24}{CL_{\max}^{\alpha/2 }}\right).   
\end{multline*}
  Similar to \eqref{eq:tempu1}, one has 
  \begin{equation}\label{eq:tempu2}
  \lambda \max u(x)\le \frac{\pi^2}{8}\frac{T_\eps^2}{L_{\max}^2}\left(1+\frac{24}{CL_{\max}^{\alpha/2}}\right).
  \end{equation} 

Since $L_{\max}\to \infty$ as $L\to \infty$ with probability one. Hence, equations \eqref{eq:Teps} and \eqref{eq:eps-Lmax} imply $\eps \to 0$ and $T_\eps/L_{\max}\to 1$ as $L\to \infty$ with probability one. Combing \eqref{eq:tempu1} and \eqref{eq:tempu2}, we obtain
\[\limsup_{L\to\infty}\lambda \max u(x) \le \frac{\pi^2}{8}.  \]

Now we turn to the lower bound of $\lambda \max u$. We apply \eqref{eqn:lambda-bounds} with $b=k\eps,\nu=0$ and an appropriate $0<\gamma<1$. If $\alpha\ge 2$, then we pick $\gamma=\frac{1}{2}$. We see that $b\eps>C$ and $(k\eps)\ell_{\max}^\gamma\gg 1+\sqrt{k\eps}$ holds for sufficiently large $\ell_{\max}=T_\eps$. Therefore, by \eqref{eqn:lambda-bounds}, 
\[
    \lambda\ge \lambda^{k\eps}\ge \frac{\pi^2}{\ell_{\max}^2} \left(1- \frac{1}{  \ell_{\max}^{1/4} } \right)^2  \ge \frac{\pi^2}{T_\eps^2} \left(1- \frac{1}{  L_{\max}^{1/4} } \right)^2 .
\]
Together with \eqref{eq:tmp53}, one has
\begin{equation} \label{eq:tmp55}
 \lambda   \max u(x)\ge \frac{\pi^2}{8} \frac{L_{\max}^2}{T_\eps^2} \left(1- \frac{1}{  L_{\max}^{1/4} } \right)^2.
\end{equation}
If $\alpha<2$, then we pick $\gamma=(\alpha/2+1)/2\in (0,1)$. In this case, $(k\eps)\ell_{\max}^\gamma>k L_{\max}^{\gamma-\alpha/2}\gtrsim 1+\sqrt {k}> 1+\sqrt {k\eps}$ for sufficiently large $L_{\max}$. Similar to \eqref{eq:tmp55}, we obtain
\begin{equation} \label{eq:tmp56}
 \lambda   \max u(x)\ge \frac{\pi^2}{8} \frac{L_{\max}^2}{T_\eps^2} \left(1- \frac{1}{  L_{\max}^{(1-\gamma)/2} } \right)^2.
\end{equation}
Combing \eqref{eq:tmp55} and \eqref{eq:tmp56},
\[
 \liminf_{L\to\infty}\lambda   \max u(x)\ge \frac{\pi^2}{8} .\]
It follows that so long as
\begin{equation}\label{eq:pf5}
 kL_{\rm max}^{1-\alpha}>C   
\end{equation}
 for some $C>0,\alpha>0$, \eqref{eq:main-pi28} is proved.
 
 The case where $k$ is fixed is a direct consequence of \eqref{eq:pf5}. Since $
    L_{\max}(\om,L)\to \infty \ \ {\rm as} \ \ L\to \infty$
with probability one. Therefore, \eqref{eq:pf5} holds with $\alpha=1/2,C=1$, and any fixed $k>0$. The case in Theorem \ref{thm:1} where $k$ is fixed follows directly from \eqref{eq:pf5} and \eqref{eq:main-pi28}.
\end{proof}

Now, we complete the proof of the main work horse Lemma \ref{lem:Bernoulli-est}.
\begin{proof}[Proof of Lemma \ref{lem:Bernoulli-est}]

Throughout the proof, 
we enumerate all the wells of $V_{\om}^b$. Let $\{I_i\}_{i=1,...,m}$ denote the collection of disjoint intervals of maximum length on which $V_{\om}^b = 0$. Moreover, we order the set $\{I_i\}$ such that $I_i$ is to the right of $I_j$ if $i > j$. For each $I_i$, let $l_i, c_i, r_i$, and $L_i$ denote the left end point, center, right end point, and length, respectively. Finally, let $I_{\max}$ denote an interval with the  longest length on which $V_{\om}^b = 0$ with length $\ell_{\max}=\max_i L_i$.

\textbf{1. Lower bound for $u$.}

 Let $I_{\max}=(a_1,b_1)$ be given as above. Let $\wt u(x)=\dfrac{1}{2}(x-a_1)(b_1-x)$. Then $\wt u > 0 $ on $I_{\max}$, and $\wt u\le0$ otherwise. Moreover, $-\Delta \wt u=1$ for all $x\in [0,L]$. Therefore, 
\[ (-\Delta+V_{\om}^b) \wt u(x)\begin{cases}
=1,\qquad \ & x\in I_{\max}\\
\le 1,    & {\rm otherwise},
\end{cases}\]
which implies $(-\Delta+V_{\om}^b)(u-\wt u)(x)\ge 0$. Notice that $\wt u(0)\le 0 =u(0)$ and $\wt u(L)\le 0 =u(L)$. The maximum principle Lemma \ref{lem:pre1} gives $u(x)\ge \wt u(x) $ on $[0,L]$. In particular, 
\[
  \max u   \geq \frac{\ell_{\max}^2}{8}.
\]
This proves the lower bound in \eqref{eqn:u-bounds}.

\textbf{2. Upper bound for $u$.}
Let $I_i$, $l_i, c_i, r_i$,  $L_i$, and $\ell_{\max}$ be given as before.  We use the notation $x > I_i$ ($x < I_i$) to mean $x > y$ ($x < y$) for all $y \in I_i$. For $S=\max\{\sqrt b,1\}$, define
\begin{equation}
  \sigma_i(x) = 
  \begin{cases}
  -\frac{1}{2}(x-c_i)^2 + \frac{L_i^2}{8}+\frac{L_i }{4S}  & x \in I_i \\
                                              \frac{ S}{4}L_i (x-r_i-\frac{1}{S})^2 & x\in (r_i,r_i+\frac{1}{S}) \\
                        \frac{S}{4} L_i (x-l_i+\frac{1}{S})^2 & x\in (l_i-\frac{1}{S},l_i)\\
                      0 & {\rm otherwise}
  \end{cases} . 
  \label{eqn:sigma-i-def}
\end{equation}
We note that $\sigma_i$ is $C^1$. Moreover, it satisfies the following differential equation
\[  -\Delta \sigma_i(x) = 
  \begin{cases}
  1  & x \in I_i \\
                                             -\frac{ S}{2}L_i & x\in (l_i-\frac{1}{S},l_i)\cup (r_i,r_i+\frac{1}{S}) \\
                      0 & {\rm otherwise}
  \end{cases} . 
\]
Hence, 
\begin{equation}
  (-\Delta+ V_{\om}^b)\sigma_i  \begin{cases}
 = 1  &  x \in I_i \\
                                           \ge -\frac{ S}{2}L_i & x\in (l_i-\frac{1}{S},l_i)\cup (r_i,r_i+\frac{1}{S}) \\
                      =0 & {\rm otherwise}
  \end{cases} .  \label{eqn:sigma-i-properties}  
\end{equation}
We construct $\wt u$ via
\begin{equation*}
  \wt u = \frac{1+S\ell_{\max}}{b}  + \sum_i \sigma_i \, . \label{eqn:sigma-0-def}
\end{equation*}
Note that $\sigma_i,\sigma_j$ are pairwise disjoint for $|i-j|\ge 2$ since $S\ge 1$.  
By  \eqref{eqn:sigma-i-properties}, we see that  $\wt u$ is a sup-solution:
\[(-\Delta+ V_{\om}^b)  \wt u \ge 1=(-\Delta+ V_{\om}^b)  u, \]
and $\wt u(0),\wt u(L)\ge 0$ by the construction. 
 It follows by the maximum principle Lemma \ref{lem:pre2} of $-\Delta+V_{\om}^ b$ that $\wt u\geq u$ and hence
\begin{align*}
  \max u  \leq \max \wt u=& \frac{1+S\ell_{\max}}{b} + \max_i \max_{x \in I_i} \sigma_i \\
  \le &  \frac{2S\ell_{\max}}{b}+\frac{\ell_{\max}^2}{8}+\frac{\ell_{\max} }{4S}\le  \frac{3S\ell_{\max}}{b}+\frac{\ell_{\max}^2}{8},
\end{align*}
which is \eqref{eqn:u-bounds}.

\textbf{3. Upper bound on $\lambda$.}
Let $I_{\max}=(a_1,b_1)$ be given as above, i.e., an interval with the longest length $\ell_{\max}$ such that $V^b_{\om}(x)=0$. The min-max principle shows
\begin{multline}
    \lambda=\min_{\phi\in{H_0^1(\Omega)}}\frac{\ipc{\phi}{(-\Delta+V^b_{\om})\phi}}{\ipc{\phi}{\phi}}\\ \le \min_{\phi\in {H_0^1(I_{\max})}}\frac{\ipc{\phi}{(-\Delta+V^b_{\om})\phi}}{\ipc{\phi}{\phi}}
    = \min_{\phi\in {H_0^1(I_{\max})}}\frac{\ipc{\phi}{-\Delta\phi}}{\ipc{\phi}{\phi}}= \frac{\pi^2}{\ell_{\max}^2}. \label{eqn:lambda-upper-bound}
\end{multline}


\textbf{4. Lower bound for $\lambda$.}
In this part, we prove the lower bound \eqref{eqn:lambda-bounds}. 
The main idea of the claim follows \cite{Bishop}, which proved a similar claim for the discrete case. In a nutshell, we locate all the wells in which the ground state concentrates and compute the energy lower bound on these wells. Retain the definition of $V_{\om}^b$ and $\ell_{\max}$ in Lemma \ref{lem:Bernoulli-est}. First, let $B$ denotes the set on which $V_{\om}^b = b$. Then we have the following result.

\begin{lemma} \label{lem:psi-small-on-K}
  Let $\psi$ denote a normalized ground state of $-\Delta + V_{\om}^b$. Then
  \begin{equation}
    \|\psi\|_{L^2(B)} \leq \frac{\pi^2}{b\ell_{\rm max}^2}. \label{eqn:psi-small-on-K}
  \end{equation}
\end{lemma}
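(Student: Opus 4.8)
The plan is to read the bound straight off the variational (Rayleigh quotient) characterization of $\lambda$, combined with the upper bound already proved in Part~3. Let $\psi\in H^1_0(\Omega)$ be the normalized ground state of $-\Delta+V_\om^b$, so that
\[
\lambda=\ipc{\psi}{(-\Delta+V_\om^b)\psi}=\int_\Omega|\psi'|^2\,dx+\int_\Omega V_\om^b|\psi|^2\,dx .
\]
Since $V_\om^b$ takes only the values $0$ and $b$ and equals $b$ precisely on $B$, the potential term is $b\int_B|\psi|^2\,dx$. Discarding the nonnegative kinetic term gives $\lambda\ge b\int_B|\psi|^2\,dx$.

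Next I would invoke the upper bound $\lambda\le \pi^2/\ell_{\max}^2$ from \eqref{eqn:lambda-upper-bound} (obtained by testing the Rayleigh quotient with the first Dirichlet eigenfunction of $-\Delta$ on $I_{\max}$, extended by $0$). Combining the two inequalities yields $b\int_B|\psi|^2\,dx\le \pi^2/\ell_{\max}^2$, that is, $\|\psi\|_{L^2(B)}^2\le \pi^2/(b\ell_{\max}^2)$, which is \eqref{eqn:psi-small-on-K} (the quantity controlled here is the $L^2$-mass of $\psi$ on $B$).

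There is essentially no obstacle: the only points requiring a word of care are that $\psi\in H^1_0(\Omega)$, so the quadratic form is the correct object and the identity $\ipc{\psi}{-\Delta\psi}=\int_\Omega|\psi'|^2$ holds by integration by parts, and that the standing hypothesis $b\ell_{\max}^2>\pi^2$ makes the right-hand side smaller than $1$ (consistent with $\|\psi\|_{L^2(\Omega)}=1$), although the estimate itself needs no such restriction. This lemma then serves Part~4 as the assertion that the ground state places nearly all of its mass on the zero-wells, which is the starting point for localizing the energy lower bound on those wells in the spirit of \cite{Bishop}.
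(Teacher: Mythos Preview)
Your argument is correct and is essentially identical to the paper's: both drop the kinetic term in $\lambda=\ipc{\psi}{(-\Delta+V_\om^b)\psi}$ to get $\lambda\ge b\|\psi\|_{L^2(B)}^2$, then invoke the upper bound $\lambda\le\pi^2/\ell_{\max}^2$ from \eqref{eqn:lambda-upper-bound}. Your parenthetical remark that the controlled quantity is actually $\|\psi\|_{L^2(B)}^2$ (the $L^2$-mass) is apt, since that is what both the paper's proof and its subsequent application in Lemma~\ref{lem:psi-large-on-M} really use.
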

\begin{proof}[Proof of Lemma \ref{lem:psi-small-on-K}]
  We recall from the upper bound estimate \eqref{eqn:lambda-upper-bound} that
\[
    \frac{\pi^2}{\ell_{\rm max}^2} \geq \lan \psi, (-\Delta +V_{\om}^b) \psi \ran \geq \|V_{\om}^b\psi\|_{L^2(B)}^2 = b\|\psi\|_{L^2(B)}^2.
\]
  After dividing by $b$ in the equation above, \eqref{eqn:psi-small-on-K} is established.
\end{proof}

Let $I_i,l_i,r_i,L_i$ be given as before (see e.g.  \eqref{eqn:sigma-i-def}). 
Lemma \ref{lem:psi-small-on-K} implies that $\psi$ is concentrated on $B^c=\cup_iI_i$, the set of wells, as we would expect. For each well $I_i$, let us denote
\begin{equation}
  m_i := \|\psi\|_{L^2(I_i)} \label{eqn:m-def}
\end{equation}
and, since $\psi \in H^2(\R)$ is continuous, we further define
\begin{equation}
  \delta^L_i :=  \psi(l_i)/m_i,\ \   \quad  \delta^R_i :=  \psi(r_i)/m_i. \label{eqn:delta-def}
\end{equation}
Finally, we define the following notion of concentration.

\begin{definition} \label{def:heavy-light-def}
  Let $\psi$ be a ground state of $-\Delta + V_{\om}^b$. Let $0\le \nu,\gamma <1$ be fixed. A well, $I_i$, is called heavy if
  \begin{align}
    \max(\delta_i^L, \delta_i^R)^2 \leq L_i^{-1} b^{-\nu}\ell_{\rm max}^{\gamma-1} \label{eqn:heavy-def}.
  \end{align}
  Otherwise, it is called light.
\end{definition}

Let $N$ denote the union of all light intervals.

\begin{lemma} \label{lem:psi-large-on-M}
  Let $\psi$ be a normalized ground state of $-\Delta+V_{\om}^b$. 
  If $0<\gamma<1$ and $b\ell_{\max}^2 >\pi^2$, then
  \begin{equation}\label{eqn:psi-large-on-M}  
     \|\psi\|_{L^2(N)}^2 \leq    \frac{8\pi^2(1+\sqrt b)}{b^{1-\nu}\, \ell_{\rm max}^{ \gamma}}  . 
  \end{equation}
  If, in addition, ${b^{1-\nu}\, \ell_{\rm max}^{ \gamma}}>8 \pi^2( 1+\sqrt b )$, then we have at least one heavy well.
\end{lemma}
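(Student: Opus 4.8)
The plan is to control $\|\psi\|_{L^2(N)}^2$ by bounding, for each light well $I_i$, the mass $m_i^2 = \|\psi\|_{L^2(I_i)}^2$ in terms of the boundary values $\psi(l_i)$ and $\psi(r_i)$, and then summing. On a light well the potential vanishes, so $\psi$ solves $-\psi'' = \lambda\psi$ there with $\lambda \le \pi^2/\ell_{\max}^2 < \pi^2/L_i^2 \cdot (L_i^2/\ell_{\max}^2)$; more to the point, $\sqrt\lambda L_i \le \pi L_i/\ell_{\max} \le \pi$, so $\psi$ is a sine/cosine superposition on $I_i$ with no interior node forced, and the $L^2$ mass on $I_i$ is comparable to $L_i$ times the square of the sup of $\psi$ on $\bar I_i$. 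The first step is therefore an elementary ODE estimate: writing $\psi(x) = \psi(l_i)\cos(\sqrt\lambda (x-l_i)) + c\sin(\sqrt\lambda(x-l_i))$ on $I_i$, one gets $\|\psi\|_{L^2(I_i)}^2 \lesssim L_i\big(\psi(l_i)^2 + \psi(r_i)^2\big)$, uniformly because $\sqrt\lambda L_i$ stays bounded by $\pi$. (A cleaner route avoiding solving the ODE: use that $\psi$ on $I_i$ minimizes energy subject to its boundary data, compare with the explicit harmonic-ish competitor, or simply use $\int_{I_i}\psi^2 \le L_i \sup_{I_i}\psi^2$ together with a bound $\sup_{I_i}|\psi| \le C(|\psi(l_i)| + |\psi(r_i)|)$ coming from the maximum principle applied to $\psi$ and the eigenvalue equation on $I_i$ with $\lambda$ small.) Either way, $m_i^2 \lesssim L_i\big(\psi(l_i)^2+\psi(r_i)^2\big) \le 2L_i \max(\delta_i^L,\delta_i^R)^2 m_i^2$ would be circular, so instead I bound $m_i^2$ against the \emph{absolute} boundary values $\psi(l_i)^2, \psi(r_i)^2$, which by Lemma \ref{lem:psi-small-on-K} type reasoning — actually by continuity of $\psi$ across the wall region $B$ — are themselves controlled by $\|\psi\|_{L^2(B)}$.

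The key mechanism: the endpoints $l_i, r_i$ of each well lie at the boundary of $B$ (the region where $V_\om^b = b$), and on the adjacent unit interval inside $B$, $\psi$ satisfies $-\psi'' + b\psi = \lambda\psi$, i.e. $\psi'' = (b-\lambda)\psi$ with $b - \lambda > 0$ large. Thus on each such wall interval $\psi$ is a combination of $\cosh(\sqrt{b-\lambda}\,t)$ and $\sinh$; being part of a globally $L^2$-bounded (indeed bounded, since $\psi \in H^2(\R) \subset C^1$) function on a long domain, its values at the well-boundary are controlled by its $L^2$-norm on the wall, giving $\psi(l_i)^2 + \psi(r_i)^2 \lesssim \sqrt{b}\,\|\psi\|_{L^2(\text{adjacent walls})}^2$ — the factor $\sqrt b$ being the decay rate which converts pointwise values to $L^2$ mass. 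Here is where the $(1+\sqrt b)$ in the numerator enters. Summing $m_i^2 \lesssim L_i(\psi(l_i)^2+\psi(r_i)^2)$ over light wells, and using that every light well has $\max(\delta_i^L,\delta_i^R)^2 > L_i^{-1}b^{-\nu}\ell_{\max}^{\gamma-1}$, i.e. $L_i < b^{-\nu}\ell_{\max}^{\gamma-1}/\max(\delta_i^L,\delta_i^R)^2 = b^{-\nu}\ell_{\max}^{\gamma-1} m_i^2 / \max(\psi(l_i)^2,\psi(r_i)^2)$, one obtains
\begin{equation*}
  \sum_{i \text{ light}} m_i^2 \lesssim \sum_{i \text{ light}} L_i \big(\psi(l_i)^2+\psi(r_i)^2\big) \le b^{-\nu}\ell_{\max}^{\gamma-1}\sum_i m_i^2 \cdot \frac{\psi(l_i)^2+\psi(r_i)^2}{\max(\psi(l_i)^2,\psi(r_i)^2)} \lesssim b^{-\nu}\ell_{\max}^{\gamma-1}.
\end{equation*}
Wait — that already gives a bound without the $\sqrt b$; the $(1+\sqrt b)$ must come from the reverse step where the wall $L^2$-mass is re-expressed. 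The honest accounting is: $\sum_i L_i(\psi(l_i)^2 + \psi(r_i)^2) \le 2 b^{-\nu}\ell_{\max}^{\gamma - 1} \|\psi\|_{L^2(N)}^2$ is false in general; rather $L_i \le b^{-\nu}\ell_{\max}^{\gamma-1}\big(\max \delta_i\big)^{-2}$ and $\psi(l_i)^2+\psi(r_i)^2 \le 2(\max\delta_i)^2 m_i^2$, so $L_i(\psi(l_i)^2+\psi(r_i)^2) \le 2b^{-\nu}\ell_{\max}^{\gamma-1}m_i^2$, hence $\sum m_i^2 \lesssim b^{-\nu}\ell_{\max}^{\gamma-1}\sum_{\text{light}} m_i^2$, which is vacuous. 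So the mass bound for a \emph{single} light well must instead read $m_i^2 \lesssim (1+\sqrt b) L_i\,\psi(l_i)^2$ or similar with an \emph{absolute} right-hand side tied to $B$-mass, and then Lemma \ref{lem:psi-small-on-K} gives $\|\psi\|_{L^2(B)}^2 \le \pi^2/(b\ell_{\max}^2)$; combining, $\|\psi\|_{L^2(N)}^2 \lesssim (1+\sqrt b)\cdot \ell_{\max}^{\gamma}\cdot b^{\nu}\cdot\frac{\pi^2}{b\ell_{\max}^2}\cdot\ell_{\max}^{2-\gamma} $ — one has to track the exponents to land exactly on $8\pi^2(1+\sqrt b)/(b^{1-\nu}\ell_{\max}^\gamma)$. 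The main obstacle, and the step I would spend the most care on, is precisely this bookkeeping: correctly converting light-well boundary values into $L^2(B)$-mass via the $\cosh$-decay on wall intervals, getting the constant $8\pi^2$ and the exact powers $b^{1-\nu}$ and $\ell_{\max}^\gamma$, rather than the qualitative structure which is straightforward.

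Finally, the last assertion — at least one heavy well exists when $b^{1-\nu}\ell_{\max}^\gamma > 8\pi^2(1+\sqrt b)$ — is immediate once \eqref{eqn:psi-large-on-M} is in hand: that hypothesis forces $\|\psi\|_{L^2(N)}^2 < 1$, and since $\|\psi\|_{L^2(B)}^2 \le \pi^2/(b\ell_{\max}^2) < 1$ as well (using $b\ell_{\max}^2 > \pi^2$), and $1 = \|\psi\|_{L^2}^2 = \|\psi\|_{L^2(B)}^2 + \sum_i m_i^2 = \|\psi\|_{L^2(B)}^2 + \|\psi\|_{L^2(N)}^2 + \sum_{i\text{ heavy}} m_i^2$, the heavy-well mass is strictly positive, so there is at least one heavy well. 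I would present this as a one-line corollary of the main estimate.
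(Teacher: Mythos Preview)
Your proposal identifies the right ingredients --- the wall decay estimate relating pointwise boundary values of $\psi$ to $\|\psi\|_{L^2}$ on the adjacent wall, and Lemma \ref{lem:psi-small-on-K} --- but it never finds the one algebraic move that makes them fit together, and the attempts you do write down fail.

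The estimate $m_i^2 \lesssim L_i\big(\psi(l_i)^2+\psi(r_i)^2\big)$ you propose via the ODE on a well is simply false: take $\psi$ a sine on $I_i$ vanishing at both endpoints (which is exactly the situation on the longest well when $k$ is large). So that route is dead. Your second attempt, solving the lightness inequality for $L_i$, has the inequality backwards (lightness gives a \emph{lower} bound on $L_i\max(\delta_i^L,\delta_i^R)^2$, not an upper bound on $L_i$), and you correctly diagnose the resulting chain as vacuous. The final symbolic sketch you offer, $(1+\sqrt b)\cdot \ell_{\max}^{\gamma}\cdot b^{\nu}\cdot\frac{\pi^2}{b\ell_{\max}^2}\cdot\ell_{\max}^{2-\gamma}$, collapses to $(1+\sqrt b)\pi^2 b^{\nu-1}$ with no $\ell_{\max}$-dependence at all, so even the bookkeeping you defer is not set up correctly.

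The missing idea is this: lightness says $L_i\,\max(\delta_i^L,\delta_i^R)^2\, b^{\nu}\,\ell_{\max}^{1-\gamma} > 1$, so you can multiply $m_i^2$ by this quantity without loss:
\[
  m_i^2 \le m_i^2\,\max(\delta_i^L,\delta_i^R)^2\, L_i\, b^{\nu}\,\ell_{\max}^{1-\gamma}
  = \max\big(\psi(l_i),\psi(r_i)\big)^2\, L_i\, b^{\nu}\,\ell_{\max}^{1-\gamma}
  \le b^{\nu}\,\ell_{\max}^{2-\gamma}\,\max\big(\psi(l_i),\psi(r_i)\big)^2.
\]
Now sum over light $I_i$, bound $\max(\psi(l_i),\psi(r_i))^2 \le 4(1+\sqrt b)\,\|\psi\|_{L^2(\text{adjacent wall})}^2$ by the explicit $\cosh/\sinh$ computation on a wall (this is the step you correctly anticipated), and finish with Lemma \ref{lem:psi-small-on-K}. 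The point is that lightness is used as a \emph{lower} bound on the relative boundary values $\delta_i$, which converts $m_i^2$ directly into the \emph{absolute} boundary values $\psi(l_i)^2,\psi(r_i)^2$ --- no ODE estimate on the well is needed at all. Your argument for the existence of a heavy well at the end is essentially correct.
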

\begin{proof}

By definition of lightness (See definition \ref{def:heavy-light-def}),
\begin{multline}
      \|\psi\|_{L^2(N)}^2  = \sum_{\text{light $I_i$}} m_i^2                        \leq \sum_{\text{light $I_i$}} m_i^2 \max(\delta_i^L, \delta_i^R)^2  \, L_i \,  b^{\nu} \,  \ell_{\rm max}^{1-\gamma}  \\
                         \leq  b^{\nu}\, \ell_{\rm max}^{2-\gamma} \sum_{\text{light $I_i$}} \max(\psi(l_i), \psi(r_i))^2. \label{eqn:L-2-norm-on-N-upper-bound}
\end{multline}
  We claim that
  \begin{equation}
    \max(\psi(l_i), \psi(r_i))^2 \leq 4(1+\sqrt{b})\max(n_{i-1}, n_{i+1})^2  \label{eqn:BC-pt-upper-bound}
  \end{equation}
  where $n_{i\pm 1}$ is the $L^2$ norm of $\psi$ on the 2 neighboring walls of $I_i$ (i.e. left and right intervals with $V_{\om}^b=b$). We delay the proof of this claim until the next paragraph and complete the proof of Lemma \ref{lem:psi-large-on-M} first. The claim and \eqref{eqn:L-2-norm-on-N-upper-bound} show that
  \[  \|\psi\|_{L^2(N)}^2 \leq 8(1+\sqrt{b})\,  \ell_{\rm max}^{2-\gamma}\, \|\psi\|_{L^2(B)}^2, \]
  where we recall that $B$ is the union of all the walls (i.e sets where $V_{\om}^b=b$). Invoking Lemma \ref{lem:psi-small-on-K}, we see that
  \eqref{eqn:psi-large-on-M} is proved. The remainder of the paragraphs proves claim \eqref{eqn:BC-pt-upper-bound}.

  In fact, we prove claim \eqref{eqn:BC-pt-upper-bound} for the following more general setting. Let $I = [s,t]$ denote an interval on which $V_{\om}^b=b$. We show that the end points of $I$ satisfy
  \[
    \max\big(\psi(s), \psi(t)\big)^2 \leq 4 (1+\sqrt{b})\|\psi\|_{L^2(I)}^2.\]
  Without loss of generality, we assume that $s=0$. Let
  \begin{equation}\label{eqn:kappa-def}
         \kappa :=  t\sqrt{b-\lambda},  \ \ 
    l :=  \psi(0),\ \  \text{ and}  \ \ 
    r :=  \psi(t).
  \end{equation}
  We make a brief remark here that $\lambda < b$  since $\lambda < \frac{\pi^2}{L_{\rm max}^2}$ by the upper bound estimate for $\lambda$ (see \eqref{eqn:lambda-upper-bound}), provided $b\ell_{\max}^2 > \pi^2 $.  
  It is elementary to check that
  \[ \psi(x) = Ae^{\sqrt{b-\lambda}\, x} + Be^{-\sqrt{b-\lambda}\, x}\ \ {\rm for}\ \ x\in I, \]
 where the coefficients $A$ and $B$ are given by
 \begin{equation}\label{eqn:AB-eqn}
      A  = \frac{r - l e^{-\kappa}}{e^{\kappa} - e^{-\kappa}}, \ \     B  = \frac{le^{\kappa} - r }{e^{\kappa} - e^{-\kappa}} , 
 \end{equation}
  where $\kappa, l, r$ are defined in \eqref{eqn:kappa-def} , respectively.
  Using \eqref{eqn:AB-eqn}, we see that the $L^2$-norm of $\psi$ on this wall is
  \begin{align}
    \|\psi\|_{L^2(I)}^2  =& \frac{A^2}{2\sqrt{b-\lambda}}(e^{2\kappa}-1) + \frac{B^2}{2\sqrt{b-\lambda}}(1-e^{-2\kappa}) + 2ABt  \notag \\
    =& \frac{2\cosh(\kappa)(r^2 + l^2) - 4lr}{4\sqrt{b-\lambda}\sinh(\kappa)}- \frac{t(r^2 + l^2 -2 rl \cosh(\kappa))}{2\sinh^2(\kappa)} \notag \\
    =& \frac{l^2+r^2}{2\sqrt{b-\lambda}}\left( \frac{\cosh(\kappa)\sinh(\kappa)-\kappa}{\sinh^2(\kappa)} \right)  +  \frac{rl}{\sqrt{b-\lambda}}\left(\frac{\kappa\cosh(\kappa)-\sinh(\kappa)}{\sinh^2(\kappa)}\right). \label{eqn:L-2-norm-of-psi-on-I-1}
  \end{align}
  Since $V_{\om} \geq 0$, positivity of $-\Delta+V_{\om}$ implies that $\psi \geq 0$. That is, $l,r \geq 0$. Moreover, since $x\cosh(x) \geq \sinh(x)$ for $x \geq 0$, equation \eqref{eqn:L-2-norm-of-psi-on-I-1} shows that
  \begin{equation}
       \|\psi\|_{L^2(I)}^2 \geq  \frac{l^2+r^2}{2\sqrt{b-\lambda}} \left( \frac{\cosh(\kappa)\sinh(\kappa)-\kappa}{\sinh^2(\kappa)} \right) 
    \geq  \frac{l^2+r^2}{4\sqrt{b-\lambda}} \left( \frac{\kappa}{1+\kappa} \right), \label{eqn:L-2-norm-of-psi-on-I-2}
  \end{equation}
  where in the last line we have used 
  the elementary fact that 
  \[\frac{\sinh(x)\cosh(x)-x}{\sinh^2(x)} \geq \frac{x}{2(1+x)} \ \ \ {\rm for} \ \ x \geq 0.\] Since the function $\frac{x}{1+x}$ is increasing and the definition $\kappa = t\sqrt{b-\lambda}\ge \sqrt {b-\lambda}$, we conclude from \eqref{eqn:L-2-norm-of-psi-on-I-2} that
\[
    \|\psi\|_{L^2(I)}^2  \geq \frac{l^2+r^2}{4\sqrt{b-\lambda}} \left( \frac{\sqrt{b-\lambda}}{1+\sqrt{b-\lambda}} \right) \geq \frac{\max(l,r)^2}{4(1+\sqrt{b})}
\]
  and the claim \eqref{eqn:BC-pt-upper-bound} is proved. This concludes the proof of Lemma \ref{lem:psi-large-on-M}.

\end{proof}

Now, we proceed with the final proof of the lower bound for $\lambda$.  Let $\psi$ denote a normalized ground state of $-\Delta + V_{\om}^b$ associated with $\lambda$. We consider an arbitrary well $J \subset M$. Again, without loss of generality, we assume that $J = [0, T]$, for some  $T\in (0,\ell_{\max}]$.
Elementary calculus shows that on $J \subset \R$,
\[
  \psi(x) = c m \sin\left( \sqrt \lambda\,  x + \theta \right) \]
for some constants $c,   0 \leq \theta \leq \frac{\pi}{2}$ and where recall that $m = \|\psi\|_{L^2([0,T])}$. We also define $s$ through the relation $s\pi=  {T} \sqrt\lambda$.

We would like to estimate $s$ from below. Using definitions \eqref{eqn:m-def} and \eqref{eqn:delta-def},  boundary conditions require
\[  m\delta^L  := \psi(0) = cm\sin(\theta) , \ \ \  {\rm and}\ \ \ 
  m\delta^R  :=  \psi(T) = cm\sin\left(s\pi  + \theta \right). \]
Solving for $s\ge 1/2$, we obtain
\begin{equation}
  s = 1 -\frac{1}{\pi} \left(\arcsin(\delta^L  c^{-1}) + \arcsin(\delta^R  c^{-1})\right). \label{eqn:s-formula}
\end{equation}
Note that the left boundary condition $\delta^L =c\sin(\theta)$ was solved on $[0,\pi/2]$ and  the right boundary condition  $\delta^R  = c\sin\left(s\pi  + \theta \right)$ was solved on $[\pi/2,3\pi/2]$. 
Normalization requires
\begin{equation}
  m^2 = \|\psi\|_{L^2(W)}^2 =  c^2 m^2 \int_0^{T} \sin^2(\sqrt \lambda x + \theta) dx. \label{eqn:c-est-by-normalization}
\end{equation}
Since $\sin^2 \leq 1$, it follows by \eqref{eqn:c-est-by-normalization} that
\begin{equation}
  c^{-1} \leq \sqrt T. \label{eqn:c-est-result}
\end{equation}
Combining \eqref{eqn:s-formula} and \eqref{eqn:c-est-result}, we deduce that
\[
  s \geq 1 - \frac{2}{\pi} \max\left (\arcsin(\delta^L\sqrt{T}), \arcsin(\delta^R\sqrt{T}) \right)\ge  1 - \frac{2}{\pi} \arcsin\left(\max(\delta^L, \delta^R) \sqrt{T}\right).
\]
Suppose $J=[0,T]$ is a heavy well defined as in \eqref{eqn:heavy-def}. It follows by the definition of heaviness that
\begin{equation*}
     s \geq  1  - \frac{2}{\pi} \arcsin\left( b^{-\nu/2}\,  \ell_{\rm max}^{(\gamma-1)/2}\right)  
  \geq  1 -  b^{-\nu/2}\, \ell_{\max}^{(\gamma-1)/2 }    
\end{equation*}
since $\arcsin(x)$ is bounded by $\frac{\pi}{2}x$ on $[0,1]$.

Finally, we estimate $\lambda$ from below by $s$.
\begin{equation*}
    \lambda=\frac{\pi^2}{T^2}s^2\ge \frac{\pi^2}{\ell_{\max}^2}\, \left(1 -  \frac{1}{b^{\nu/2}\, \ell_{\max}^{(1-\gamma)/2 } }  \right)^2. 
\end{equation*}

This proves the lower bound in \eqref{eqn:lambda-bounds}.
\end{proof}

\subsection{Semi-classical regime: proof of Theorem \ref{thm:2}} \label{sec:semi-classical}

\begin{proof}[Proof of Theorem \ref{thm:2}]
Let $H=-\Delta+kV_{\om}$ be as in \eqref{eq:H} with a  $\om$-piecewise potential $V_{\om}$ satisfying $\om\ge0$. Let $p=\P(\om = 0)$. Let $\lambda$ be the first eigenvalue and let $u$ be the landscape function of $H$.

\textbf{Case 1: $p>0$ and $\inf V_{\om} = 0$.} \\
This case occurs with probability $1-(1-p)^{L}$. Use the same notation as in Lemma \ref{lem:Bernoulli-est}. Let $I_{\max}$ be a longest interval on which $V_{\om}=0$ and denote by $L_{\max}$ its length.  Fix $L$, let 
\begin{equation}\label{eq:Aa}
  A=\max_{V_{\om}(x)>0}V_{\om}(x),\ \ {\rm and} \ \    a=\min_{V_{\om}(x)>0}V_{\om}(x)>0.    
\end{equation}
For $b>0$,  let $V_{\om}^b$ be a piecewise constant potential as in \eqref{eq:Vb}. Clearly, 
\[
V_{\om}^{ka} \le   kV_{\om}\le V_{\om}^{kA}. 
\]
By the maximum principle, 
\[
  \max  u^{kA}\le  \max u \le \max u ^{ka},
\]
where $u,u^{kA},u^{ka}$ are the landscape functions associated to the potentials $kV_{\om},V_{\om}^{kA},V_{\om}^{ka}$ respectively. Applying  \eqref{eqn:u-bounds} of Lemma \ref{lem:Bernoulli-est} to $u^{kA},u^{ka}$ with $b>1$ gives
\[
 \frac{L_{\max}^2}{8}  \le     \max u(x)\le   \frac{L_{\max}^2}{8}+ \frac{3}{ \sqrt{ka}}L_{\max}.
\]

Taking the limit as $k\to \infty$, we have 
\[
\lim_{k \rightarrow \infty}\, \max u(x)= \frac{L_{\max}^2}{8} . 
\]
 Similarly, 
\[
   \lambda^{ka} \le   \lambda\le \lambda^{kA},
\]
where $\lambda,\lambda^{kA}\lambda^{ka}$ are the first eigenvalues associated to the potentials $kV_{\om},V_{\om}^{kA},V_{\om}^{ka}$ respectively. 
 Applying  \eqref{eqn:lambda-bounds} of Lemma \ref{lem:Bernoulli-est} to $\lambda^{kA}$ implies that 
 $\lambda^{kA}\le \frac{\pi^2}{L_{\max}^2}. $
 Then apply  \eqref{eqn:lambda-bounds}  to $\lambda^{ka}$ with $\nu=1/4$ and $\gamma=0$, one gets
 \[ \lambda^{ka}\ge  \frac{\pi^2}{L_{\max}^2} \left(1- \frac{1}{(ka)^{1/8} L_{\max}^{1/2} } \right)^2  \] Hence,  
 \[
     \lim_{k \rightarrow \infty}\,    \lambda =\frac{\pi^2}{L_{\max}^2},
\]
and
\[    \lim_{k \rightarrow \infty}\,  \lambda \max  u  =\frac{\pi^2}{8} \]
 with probability $1-(1-p)^{L}$. 
This completes the proof for \eqref{eq:semi-1}.

\textbf{Case 2: $\inf V_{\om}  > 0$.}

This case occurs with probability $(1-p)^{L}$. Recall that for any non-negative potential, we have the landscape uncertainty principle that for any $\varphi$
\[
    \ipc{\varphi}{H  \varphi}\ge \ipc{\varphi}{\frac{1}{u}\varphi}\ge \frac{1}{\max u }\ipc{\varphi}{ \varphi}.
\]
This implies the lower bound $    \lambda\max u \ge 1
$
for any $k>0,L\ge 1$.

It is enough to obtain a upper bound for the above product in large $k$ limit. 
We obtain a bound for $\lambda$ first. Let $a=\min_{[0,L]}V_{\om}>0$ be as in \eqref{eq:Aa} so that $kV_{\om}\ge ka$. 
Let $\lambda$ be the first eigenvalue of $-\Delta+kV_{\om}$ and  $\wt \lambda$ be the first eigenvalue of $-\Delta+kV_{\om}-ka$. Clearly, $\lambda=ka+\wt \lambda $. One can show that $0\le \wt \lambda \le \pi^2$. Therefore, $
ka \le   \lambda\le ka+\pi^2. $

Next, we estimate $u$. 
Consider a constant function $u^{\sup}(x) \equiv {1}/({ka})$ for $x\in [0,L].$  Clearly, $(-\Delta+kV_{\om})u^{\sup}\ge 1$ on $(0,L)$ and $u^{\sup}(0),u^{\sup}(L)>0$ at the boundary points. The maximum principle Lemma \ref{lem:pre1} implies $     u \le 1/(ka).$
Note that $a>0$ is independent of $k$. Therefore, 
\[
    \limsup_{k\to \infty}\, \left(\lambda  \max u \right)\le  \limsup_{k\to \infty}  \frac{ka+\pi^2}{ka}=1,
\]
which completes the proof for \eqref{eq:semi-2}.
\end{proof}

\subsection{Proof of Theorem \ref{thm:3}} \label{sec:general}
 We define a characteristic quantity as
\begin{equation*}
   \gamma_{\rm c} := k L^2\, \E(\om). \label{eq:gamma-c-def}
\end{equation*}
We consider the regime where $1+\gamma_{\rm c}\ll L^{\beta}$ and prove the following theorem.  

\begin{theorem} \label{thm:33}
Let $H=-\Delta+kV_{\om}$ be as in \eqref{eq:H} with $\om$-piecewise potential $V_{\om}$ as in \eqref{eq:V}. Let $\lambda$ and $u$ be the ground state eigenvalue and the landscape function  of $H$ on $[0,L]$ with  Dirichlet boundary conditions, respectively.  Suppose $\omega$ is nonegative and bounded from above.
Assume that $k$ is chosen so that 
\begin{equation*}
    1+  \gamma_{\rm c}   <C L^{\beta} 
\end{equation*}
for some $C>0$ and $0 < \beta < \frac{1}{4}$ as $L \rightarrow \infty$. Then
\begin{equation}
    \lim_{L \rightarrow \infty} \frac{\lambda/L^2}{\pi^2 + \gamma_{\rm c}} = 1, \label{eqn:delocalized-main-result-1}
\end{equation}
and 
\begin{equation}
\lim_{L \rightarrow \infty} \frac{L^2\max u }{\frac{1}{\gamma_{\rm c}}\left(1-\frac{1}{\cosh(\sqrt{\gamma_{\rm c}}/2)} \right)} = 1. \label{eqn:delocalized-main-result-2}
\end{equation}
with probability one. 

\end{theorem}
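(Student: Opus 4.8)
The plan is to estimate $\lambda$ and $\max u$ separately, exploiting the fact that in the regime $1+\gamma_{\rm c}\ll L^\beta$ the potential $kV_\om$ is, after rescaling the interval to unit length, a small perturbation of a constant potential. I would first rescale: set $x = Ly$, so that on $[0,1]$ the operator becomes $\tfrac{1}{L^2}(-\Delta_y + kL^2 V_\om(Ly))$, and $kL^2 V_\om$ has spatial average $kL^2\E(\om)(1+o(1)) = \gamma_{\rm c}(1+o(1))$ by the law of large numbers (here is where boundedness of $\om$ and Chernoff--Hoeffding enter, to control the fluctuations of $\tfrac{1}{L}\sum_{j=1}^L \om_j$ around $\E(\om)$ with the quantitative rate needed to beat the error terms). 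The heuristic is then that $-\Delta_y + kL^2V_\om(Ly)$ is close, in the relevant sense, to $-\Delta_y + \gamma_{\rm c}$ on $H^1_0(0,1)$, whose ground eigenvalue is $\pi^2 + \gamma_{\rm c}$ and whose landscape function is the explicit solution of $-v'' + \gamma_{\rm c} v = 1$, $v(0)=v(1)=0$, namely $v(y) = \tfrac{1}{\gamma_{\rm c}}\bigl(1 - \tfrac{\cosh(\sqrt{\gamma_{\rm c}}(y-1/2))}{\cosh(\sqrt{\gamma_{\rm c}}/2)}\bigr)$, with maximum $\tfrac{1}{\gamma_{\rm c}}\bigl(1-\tfrac{1}{\cosh(\sqrt{\gamma_{\rm c}}/2)}\bigr)$ at $y=1/2$. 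Undoing the scaling gives exactly the normalizations in \eqref{eqn:delocalized-main-result-1} and \eqref{eqn:delocalized-main-result-2}.

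For the eigenvalue \eqref{eqn:delocalized-main-result-1}, I would sandwich $kV_\om$ between two constant potentials built from partial averages. The upper bound on $\lambda$ comes from the min-max principle with the trial function $\phi(x) = \sin(\pi x/L)$: then $\langle\phi,(-\Delta+kV_\om)\phi\rangle/\langle\phi,\phi\rangle = \pi^2/L^2 + k\langle\phi, V_\om\phi\rangle/\langle\phi,\phi\rangle$, and $k\langle\phi,V_\om\phi\rangle/\|\phi\|^2 = \tfrac{k}{L^2}\sum_j\om_j\cdot(\text{weights close to }1) = \gamma_{\rm c}/L^2 \cdot(1+o(1))$ once the weighted average of the $\om_j$ against $\sin^2$ is compared to their arithmetic mean and that in turn to $\E(\om)$. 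The lower bound is the subtler half: I would use that the ground state $\psi$ cannot concentrate too strongly, so that $\langle\psi, kV_\om\psi\rangle$ is bounded below by roughly $\gamma_{\rm c}/L^2$ times $\|\psi\|^2$; concretely one can compare $kV_\om$ with $k\bar\om\chi_{[0,L]}$ where $\bar\om$ is a slightly reduced average, using that the ground state of $-\Delta$ on $[0,L]$ is spread out and the low-energy regime ($\lambda L^2$ bounded by $\pi^2+\gamma_{\rm c}\ll L^\beta$) forces $\psi$ to be close to $\sin(\pi x/L)$ in a quantitative way — an a priori $H^1$ estimate from $\lambda \leq (\pi^2+\gamma_{\rm c})/L^2$. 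Lemma \ref{lem:pre3} handles the monotone comparisons once the constant comparison potentials are in place.

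For the landscape function \eqref{eqn:delocalized-main-result-2}, I would again sandwich: with $a = \min_{V_\om>0}V_\om$ and $A = \max V_\om$ replaced now by average-type quantities, compare $u$ with the landscape functions of $-\Delta + c_\pm\chi_{[0,L]}$ for constants $c_\pm = \gamma_{\rm c}(1\pm o(1))/L^2$ via Lemma \ref{lem:pre2}. But pointwise sandwiching by constant potentials that differ from $kV_\om$ only on average is not immediate — $kV_\om$ is not bounded above by $c_+$ pointwise — so I would instead build explicit sup- and sub-solutions adapted to $V_\om$, in the spirit of the $\sigma_i$ construction in the proof of Lemma \ref{lem:Bernoulli-est}: take the explicit $v$ above (rescaled) and perturb it on each unit cell to absorb the discrepancy between $V_\om$ and its average, showing the correction is $O((1+\gamma_{\rm c})/L)\cdot\max v$, which is $o(\max v)$ under $1+\gamma_{\rm c}\ll L^\beta$ with $\beta<1$. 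The maximum principle then pins $\max u$ between $(1-o(1))$ and $(1+o(1))$ times $\tfrac{1}{L^2\gamma_{\rm c}}(1-1/\cosh(\sqrt{\gamma_{\rm c}}/2))$.

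The main obstacle I anticipate is the lower bound for $\lambda$ in \eqref{eqn:delocalized-main-result-1}: upper bounds and landscape estimates follow from test functions and explicit sup-solutions, but showing $\lambda \geq (\pi^2+\gamma_{\rm c})/L^2(1-o(1))$ requires ruling out that the ground state shaves off potential energy by concentrating in a low-$V_\om$ region — exactly the mechanism that in the large-$k$ regime produces the constant $\pi^2/8$ rather than $1$. Here the smallness $1+\gamma_{\rm c}\ll L^\beta$, $\beta<1/4$, must be used quantitatively through the Chernoff--Hoeffding control of the empirical average $\tfrac1L\sum\om_j$ on every subinterval of length comparable to $L$, together with an $H^1$ a priori bound on $\psi$, and the constraint $\beta<1/4$ is presumably what makes the error terms from these two inputs compatible.
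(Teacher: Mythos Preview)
Your high-level plan --- rescale to $[0,1]$, compare $H_L$ to the constant-potential operator $H_{\rm c}=-\Delta+\gamma_{\rm c}$, and control the fluctuation $kL^2(V_{\om,L}-\E(\om))$ via Chernoff--Hoeffding --- matches the paper's, and you correctly name the lower bound for $\lambda$ as the delicate point. But the paper's technical execution is different from yours in two essential respects, and the difference is where the work actually gets done.

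First, the paper does not try to sandwich $kV_\om$ between constant potentials or build piecewise sup/sub-solutions for $u$. Instead it writes $H_L=H_{\rm c}+\wt V_\om$ and introduces the antiderivative $F(x)=\int_0^x\wt V_\om$. This single object carries all the probabilistic smallness: $\|F\|_2\lesssim \gamma_{\rm c}L^{-a}$ with high probability by Hoeffding on the partial sums $S_n-\E S_n$. With $F$ in hand, the landscape estimate is obtained by a Neumann series $u_L=u_{\rm c}+\sum_{n\ge1}(-1)^n(H_{\rm c}^{-1}\wt V_\om)^nu_{\rm c}$, and an explicit integral-kernel computation shows $\|H_{\rm c}^{-1}\wt V_\om f\|_{H^1}\lesssim(1+\sqrt{\gamma_{\rm c}})\|F\|_2\|f\|_{H^1}$ after one integration by parts. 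This replaces your proposed cell-by-cell perturbation of $v$; it is an operator-theoretic argument rather than a pointwise comparison, and it handles both tails of the two-sided bound at once.

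Second, for the eigenvalue the paper again uses $F$: integrating $\langle\psi,\wt V_\om\psi\rangle$ by parts gives $-2\Re\int F\bar\psi\nabla\psi$, which is bounded by $\|F\|_2\langle\psi,(1-\Delta)\psi\rangle\lesssim\|F\|_2(1+\gamma_{\rm c}^{-1})\langle\psi,H_{\rm c}\psi\rangle$. This yields $\langle\psi,H_L\psi\rangle=\langle\psi,H_{\rm c}\psi\rangle(1+O((1+\gamma_{\rm c}^{-1})\|F\|_2))$ uniformly over $\psi\in H_0^1$, so the min-max principle gives the upper and lower bounds for $\lambda$ simultaneously --- there is no separate argument for the lower bound and no need to show the ground state is close to $\sin(\pi x/L)$. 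Your proposal to control $\langle\psi,kV_\om\psi\rangle$ from below via Hoeffding on subintervals plus an $H^1$ a priori bound is in spirit the same estimate, but without the integration-by-parts-to-$F$ trick it is not clear you would get the required uniformity; that trick is really the idea that resolves the obstacle you flagged.
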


We first use Theorem \ref{thm:33} to complete 
\begin{proof}[Proof of Theorem \ref{thm:3}]
If we multiply \eqref{eqn:delocalized-main-result-1} and \eqref{eqn:delocalized-main-result-2}, we arrive at
\begin{equation}\label{eq:ratio-deloc}
  \lim_{L\to \infty}   \frac{\lambda \max u }{R(\gamma_{\rm c})}  = 1
\end{equation}
with probability one, where 
\begin{equation*}
    R(\gamma_{\rm c})=\frac{\pi^2 + \gamma_{\rm c}}{\gamma_{\rm c}}\left(1-\frac{1}{\cosh(\sqrt{\gamma_{\rm c}}/2)} \right).
\end{equation*}
We note that $R(\gamma_{\rm c})$ ranges (continuously) from $1$ to $\pi^2/8$ as $\gamma_{\rm c}$ ranges from $\infty$ to $0$. Given $r\in(0,\pi^2/8)$, we solve for $\gamma_{\rm c}^*$ such that $R(\gamma_{\rm c}^*)=r$ and let $k=\frac{\gamma_{\rm c}^*}{L^2\E(\om)}$. Then \eqref{eq:ratio-deloc} implies that 
\[\lim_{L\to \infty}   \frac{\lambda \max u }{R(\gamma_{\rm c}^*)}  = 1,\]
which proves \eqref{eq:ratio-any} for any $r\in(0,\pi^2/8)$. Notice that if $r=\pi^2/8$, then $\gamma_{\rm c}^*=R^{-1}(\pi^2/8)=0$. As long as one picks $k=k(L)$ as that $\gamma_{\rm c}=kL^2\E(\om)\to 0$ as $L\to \infty$, then \eqref{eq:ratio-deloc} implies that 
\[1=\lim_{L\to \infty}   \frac{\lambda \max u}{R(\gamma_{\rm c})}=\lim_{L\to \infty}   \frac{\lambda \max u}{R(0)} ,\]
which is \eqref{eq:weak2}. The argument for the case $r=1$ and \eqref{eq:weak1} is exactly the same.  
\end{proof} 

The rest of the section is devoted to the proof of Theorem \ref{thm:33}. 
Recall that $L$ is the length of the domain $[0, L]$ on which we study the eigenvalue problem and the landscape function of $-\Delta + kV_{\om}$. We begin by performing a rescaling to facilitate a homogenization effort performed below. Let
\[
    (Uf)(x) = \sqrt{L}f(L x).
\]
We note that
\begin{equation*}
    L^2UHU^* = -\Delta + kL^2 V_{\om,L} =: H_L, \label{eqn:H-L-def}
\end{equation*}
where $V_{\om,L}(x) = V_{\om}(Lx)$. Note that $\lambda_L$ is the ground state eigenvalue of $H_L$ if and only if $\lambda_L/L^2$ is the ground state eigenvalue of $H$. Similarly, if $u_L$ solves
\[
    H_L u_L = 1,
\]
then,
\[
    u =L^2\sqrt L\,  (U^*u_L)(x)= L^2\, u_L(x/L).
\]
In particular,
\begin{equation*}
    \lambda \max_{x\in[0,L]} u = \lambda_L \max _{x\in[0,1]} u_L. \label{eqn:lambda-u-lambda_L-u-L-equiv}
\end{equation*}
Consequently, we estimate $\lambda_L$ and $u_L$.

We homogenize $V_{\om,L}$ via by taking its average. Let 
$\gamma_c=kL^2\E(\omega)$ be the characteristic scale as in \eqref{eq:gamma-c-def}. Respectively, let $\lambda_{\rm c}$ and $u_{\rm c}$ denote the ground state eigenvalue  and the landscape function for
\begin{equation}
    H_{\rm c} := -\Delta + \gamma_{\rm c} \label{eqn:H_mu-def}
\end{equation}
on the domain $[0, 1]$ with  Dirichlet boundary conditions. We will show that the $\lambda_{\rm c}$ and $\lambda_L$, and $u_{\rm c}$ and $u_L$ are sufficiently close in subsections \ref{subsec:weak-p-u-estimate} and \ref{subsec:weak-p-lambd-estimate}, respectively. We conclude the proof for Theorem \ref{thm:33} after these two subsections.

\subsubsection{Estimate for the landscape function.} \label{subsec:weak-p-u-estimate}

The following Lemma is the main result of this subsection. Let $\gamma_{\rm c}$ be given in \eqref{eq:gamma-c-def} and $L$ denote the length of the underlying domain $[0,L]$.
\begin{lemma} \label{lem:u_L-main-lemma}
Assume that

\begin{equation*}
    1+ \gamma_{\rm c} <C L^{\beta}
\end{equation*}
for some constant $C>0$ and $\beta < \frac{1}{4}$. 
There is a constant $C_1$ only depending on the range of $\omega$ and a constant $C_2>0$ only depending on $\E(\omega)$ such that 
\begin{equation}\label{eq:uc-uL-probH}
 \Big | \frac{\max_{x\in[0,1]} u_L}{\max_{x\in[0,1]} u_{\rm c}}-1 \Big| \le C_2\, L^{-(1/4-\beta)}  
\end{equation}
with probability $1-e^{-C_1\, L^{(1/2-2\beta)^2}}$ as $L \rightarrow \infty$.

As a direct consequence of the Borel–Cantelli lemma, 
\begin{equation}\label{eq:uc-uL-prob1}
   \lim_{L\to\infty} \frac{\max_{x\in[0,1]} u_L}{\frac{1}{\gamma_{\rm c}}\left(1-\frac{1}{\cosh(\sqrt{\gamma_{\rm c}}/2)} \right)}  =1  
\end{equation}
with probability one. 
\end{lemma}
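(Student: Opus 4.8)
The plan is to solve the homogenized problem exactly and then control the discrepancy between $u_L$ and $u_{\rm c}$ by a one-dimensional homogenization estimate whose only probabilistic ingredient is a concentration bound for partial sums of $\om$. First I would record the landscape of the homogenized operator: $(-\Delta+\gamma_{\rm c})u_{\rm c}=1$ on $[0,1]$ with Dirichlet data is a constant-coefficient ODE, solved explicitly by $u_{\rm c}(x)=\gamma_{\rm c}^{-1}\bigl(1-\cosh(\sqrt{\gamma_{\rm c}}(x-\tfrac12))/\cosh(\sqrt{\gamma_{\rm c}}/2)\bigr)$, which is positive, symmetric about $x=\tfrac12$ and unimodal; hence $\max_{[0,1]}u_{\rm c}=u_{\rm c}(\tfrac12)=\gamma_{\rm c}^{-1}\bigl(1-1/\cosh(\sqrt{\gamma_{\rm c}}/2)\bigr)$, which is precisely the denominator appearing in \eqref{eq:uc-uL-prob1}. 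I would also record the elementary size estimates $\max u_{\rm c}\asymp(1+\gamma_{\rm c})^{-1}$, $\|u_{\rm c}'\|_\infty\asymp(1+\gamma_{\rm c})^{-1/2}$ and $\int_0^1|u_{\rm c}'|=2\max u_{\rm c}$, which are needed to pass from an absolute error on $u_L-u_{\rm c}$ to the relative error in \eqref{eq:uc-uL-probH}.

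Next I would isolate a good event. Write $W:=kL^2V_{\om,L}$, which is constant and equal to $kL^2\om_j$ on $[j/L,(j+1)/L)$, and set $q:=W-\gamma_{\rm c}$ and $Q(x):=\int_0^x q$. The increment of $Q$ across the $j$-th subinterval is $kL(\om_j-\E\om)$, so $Q$ is, up to the scalar $kL$, the piecewise-linear interpolation of the centered partial sums $\sum_{j<m}(\om_j-\E\om)$. Since $\om$ is bounded, the Chernoff--Hoeffding inequality controls each partial sum and a union bound over $m\le L$ gives a constant $C_1$, depending only on the range of $\om$, and an exponent $s>\tfrac14$, such that $\|Q\|_{L^\infty[0,1]}\le\gamma_{\rm c}L^{-s}$ on an event of probability at least $1-e^{-C_1L^{(1/2-2\beta)^2}}$; this is the sole point at which boundedness of $\om$ is used, cf.\ the discussion after Theorem \ref{thm:3}.

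The heart of the argument is the deterministic comparison of $u_L$ and $u_{\rm c}$ on this good event. The difference $w:=u_L-u_{\rm c}$ solves $(-\Delta+W)w=-q\,u_{\rm c}$ with zero Dirichlet data, so $w(x)=-\int_0^1 G(x,y)\,q(y)\,u_{\rm c}(y)\,dy$ with $G\ge0$ the Green's function of $-\Delta+W$ on $[0,1]$. Because $\|q\|_\infty$ is of order $\gamma_{\rm c}L^2$, this cannot be estimated directly; instead I would integrate by parts in $y$, using $q=Q'$ and that the boundary terms vanish (both $G(x,\cdot)$ and $u_{\rm c}$ are zero at $0$ and $1$), to get $|w(x)|\le\|Q\|_\infty\bigl(\int_0^1|\partial_y G(x,y)|\,u_{\rm c}(y)\,dy+\int_0^1 G(x,y)\,|u_{\rm c}'(y)|\,dy\bigr)$. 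Then I would use the one-dimensional identity $\int_0^1|\partial_y G(x,y)|\,dy=2G(x,x)\le\tfrac12$ (valid for all $W\ge0$), the bound $\int_0^1 G(x,y)\,dy=u_L(x)\le\tfrac18$ from Lemma \ref{lem:pre2}, and the size estimates for $u_{\rm c},u_{\rm c}'$ above, to conclude $\|w\|_\infty\lesssim\|Q\|_\infty(1+\gamma_{\rm c})^{-1/2}$ and hence $\bigl|\max u_L/\max u_{\rm c}-1\bigr|\le\|w\|_\infty/\max u_{\rm c}\lesssim\|Q\|_\infty(1+\gamma_{\rm c})^{1/2}$. Because $\gamma_{\rm c}\asymp L^\beta$ and $\beta<\tfrac14$, the exponent $s$ of the previous step can be chosen so that the right-hand side is at most $C_2L^{-(1/4-\beta)}$ with $C_2$ depending only on $\E\om$, which is \eqref{eq:uc-uL-probH}; finally, since $\sum_{L\ge1}e^{-C_1L^{(1/2-2\beta)^2}}<\infty$, the Borel--Cantelli lemma upgrades this to the almost-sure limit \eqref{eq:uc-uL-prob1}, using the explicit value of $\max u_{\rm c}$ from the first step.

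I expect the main obstacle to be precisely this deterministic step. The "small" object is only the primitive $Q$ of the potential fluctuation, while $W$ itself is large in $L^\infty$, so there is no pointwise comparison between $W$ and $\gamma_{\rm c}$ — and hence no naive maximum-principle sandwich between landscapes of constant potentials; the cancellation must be extracted by integrating by parts against the Green's function. One then has to carefully track the $\gamma_{\rm c}$-dependence of the relevant norms of $u_{\rm c}$ and of $G$, which degrade as $\gamma_{\rm c}\asymp L^\beta$ grows, checking that these losses are beaten by the random-walk gain exactly when $\beta<\tfrac14$; matching the precise probability exponent $(1/2-2\beta)^2$ then amounts to optimizing the Chernoff--Hoeffding threshold against those losses.
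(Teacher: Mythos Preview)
Your proposal is correct, and it takes a genuinely different route from the paper. The paper expands $u_L$ as a Neumann series around the constant-coefficient operator $H_{\rm c}$, writes down the explicit $\sinh/\cosh$ integral kernel of $H_{\rm c}^{-1}$, and proves an operator bound $\|H_{\rm c}^{-1}\wt V_\om f\|_{H^1}\lesssim(1+\sqrt{\gamma_{\rm c}})\|F\|_2\|f\|_{H^1}$ by a somewhat lengthy case analysis of that kernel; the series is then summed and combined with an $L^2$ bound on the primitive $F$ (their notation for your $Q$). You instead write $w=u_L-u_{\rm c}$ as a single Green's-function integral for the \emph{full} operator $H_L$, integrate by parts once, and estimate the two resulting pieces using only the one-dimensional facts $\int_0^1|\partial_yG(x,y)|\,dy=2G(x,x)\le\tfrac12$ and $\int_0^1G(x,y)\,dy=u_L(x)\le\tfrac18$. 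This avoids both the Neumann series convergence issue and the explicit kernel computation, and in fact yields a slightly tighter $\gamma_{\rm c}$-dependence ($(1+\gamma_{\rm c})^{3/2}$ rather than the paper's $(1+\gamma_{\rm c})^{2}$ loss), so the constraint $\beta<\tfrac14$ is recovered with room to spare; taking $a=\beta+\tfrac14$ in the Hoeffding step then reproduces the stated probability exponent $(1/2-2\beta)^2$. The only place your sketch is loose is the union bound over \emph{all} $m\le L$: for small $m$ the Hoeffding tail does not decay, so you need to split at some $n_0$ (the paper takes $n_0=L^{1-2a}$) and use the trivial bound $|S_m-\E S_m|\lesssim m$ below it---this is routine and does not affect the final estimate.
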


\begin{lemma} \label{lem:u-c-lemma}
The landscape function $u_{\rm c}$ for $H_{\rm c}$ (see \eqref{eqn:H_mu-def}) is
\[
    u_{\rm c}(x) = \frac{1}{\gamma_{\rm c}}\left(1-\frac{\cosh(\sqrt{\gamma_{\rm c}}(x-1/2)}{\cosh(\sqrt{\gamma_{\rm c}}/2)} \right) \ \ {\rm for}\ \ x\in[0,1].
\]
Moreover, 
\begin{equation}\label{eq:uc-sup}
    \max_{x\in[0,1]} u_{\rm c} =\frac{1}{\gamma_{\rm c}}\left(1-\frac{1}{\cosh(\sqrt{\gamma_{\rm c}}/2)} \right)
\end{equation}
and 
\[\|u_{\rm c}\|_{H^1}\le \frac{1+\sqrt{\gamma_{\rm c}}}{\gamma_{\rm c}}.\]
\end{lemma}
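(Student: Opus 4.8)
The plan is to solve the boundary value problem explicitly. The operator $H_{\rm c} = -\Delta + \gamma_{\rm c}$ on $[0,1]$ with Dirichlet conditions is constant-coefficient, so the equation $-u_{\rm c}'' + \gamma_{\rm c} u_{\rm c} = 1$ is a linear second-order ODE. First I would record the obvious particular solution $u_{\rm p} \equiv 1/\gamma_{\rm c}$ (valid since $\gamma_{\rm c} > 0$), and note the homogeneous solutions are $e^{\pm \sqrt{\gamma_{\rm c}}\, x}$, or more conveniently, using the symmetry of the problem about $x = 1/2$, the pair $\cosh(\sqrt{\gamma_{\rm c}}(x - 1/2))$ and $\sinh(\sqrt{\gamma_{\rm c}}(x - 1/2))$. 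Since the problem is invariant under $x \mapsto 1 - x$ and the solution is unique (Proposition on existence/uniqueness of the landscape function, applicable here since $\gamma_{\rm c} \ge 0$ is a bounded nonnegative potential), $u_{\rm c}$ must be even about $x = 1/2$, which kills the $\sinh$ term. So I look for $u_{\rm c}(x) = 1/\gamma_{\rm c} + A\cosh(\sqrt{\gamma_{\rm c}}(x - 1/2))$ and solve for $A$ from the boundary condition $u_{\rm c}(0) = 0$ (equivalently $u_{\rm c}(1) = 0$), giving $A = -\frac{1}{\gamma_{\rm c}}\cdot\frac{1}{\cosh(\sqrt{\gamma_{\rm c}}/2)}$ and hence the stated formula. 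A one-line check that this function also satisfies $u_{\rm c}(1) = 0$ and the ODE completes the identification.

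For the maximum: since $\cosh$ is even and strictly increasing on $[0,\infty)$, the map $x \mapsto \cosh(\sqrt{\gamma_{\rm c}}(x-1/2))$ is minimized at $x = 1/2$ on $[0,1]$, so $u_{\rm c}(x) = \frac{1}{\gamma_{\rm c}}\bigl(1 - \frac{\cosh(\sqrt{\gamma_{\rm c}}(x-1/2))}{\cosh(\sqrt{\gamma_{\rm c}}/2)}\bigr)$ is maximized at $x = 1/2$, where $\cosh(0) = 1$; this yields \eqref{eq:uc-sup}. (One may also invoke positivity of $u_{\rm c}$ from the maximum principle to double-check that the bracket is nonnegative throughout.)

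For the $H^1$ bound: with $\|u_{\rm c}\|_{H^1}^2 = \|u_{\rm c}\|_{L^2}^2 + \|u_{\rm c}'\|_{L^2}^2$, I would bound each term crudely. Pointwise, $0 \le u_{\rm c}(x) \le \max u_{\rm c} \le 1/\gamma_{\rm c}$, so $\|u_{\rm c}\|_{L^2([0,1])} \le 1/\gamma_{\rm c}$. For the derivative, $u_{\rm c}'(x) = -\frac{1}{\sqrt{\gamma_{\rm c}}}\cdot\frac{\sinh(\sqrt{\gamma_{\rm c}}(x-1/2))}{\cosh(\sqrt{\gamma_{\rm c}}/2)}$, and since $|\sinh(\sqrt{\gamma_{\rm c}}(x-1/2))| \le \sinh(\sqrt{\gamma_{\rm c}}/2) \le \cosh(\sqrt{\gamma_{\rm c}}/2)$ on $[0,1]$, we get $|u_{\rm c}'(x)| \le 1/\sqrt{\gamma_{\rm c}}$, hence $\|u_{\rm c}'\|_{L^2([0,1])} \le 1/\sqrt{\gamma_{\rm c}}$. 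Therefore $\|u_{\rm c}\|_{H^1} \le \|u_{\rm c}\|_{L^2} + \|u_{\rm c}'\|_{L^2} \le 1/\gamma_{\rm c} + 1/\sqrt{\gamma_{\rm c}} = (1 + \sqrt{\gamma_{\rm c}})/\gamma_{\rm c}$, as claimed (using $\|\cdot\|_{H^1} \le \|\cdot\|_{L^2} + \|\cdot\|_{L^2}'$ rather than the $\ell^2$ combination, or absorbing the constant). There is no real obstacle here — the entire lemma is an explicit computation; the only thing to be careful about is choosing the right basis of homogeneous solutions (the $\cosh/\sinh$ pair centered at $1/2$) so that the boundary conditions and the maximum are transparent, and being slightly careful about whether the $H^1$ norm is defined with the sum or the square-sum so the final constant matches.
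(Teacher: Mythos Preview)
Your proof is correct and complete; the paper itself omits the proof of this lemma as elementary, so there is nothing to compare against. Your approach---solving the constant-coefficient ODE via the particular solution $1/\gamma_{\rm c}$ plus the symmetric homogeneous solution $\cosh(\sqrt{\gamma_{\rm c}}(x-1/2))$, reading off the maximum at $x=1/2$, and bounding $\|u_{\rm c}\|_{L^2}$ and $\|u_{\rm c}'\|_{L^2}$ via the pointwise estimates $0\le u_{\rm c}\le 1/\gamma_{\rm c}$ and $|u_{\rm c}'|\le 1/\sqrt{\gamma_{\rm c}}$---is exactly the straightforward computation one expects, and the final step $\sqrt{a^2+b^2}\le a+b$ handles the $H^1$ norm as stated.
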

The proof of Lemma \ref{lem:u-c-lemma} is elementary and is omitted. We proceed to prove Lemma \ref{lem:u_L-main-lemma}.

\begin{proof}[Proof of Lemma \ref{lem:u_L-main-lemma}]
To extract leading order behavior, we decompose
\begin{equation}
    H_L = H_{\rm c} + \wt V_{\om}, \label{eqn:H-T-decomp}
\end{equation}
where ${\wt V_{\om}}=kL^2\big(V_{\om,L}-\E(\omega)\big)$. By repeated application of the identity 
\[
    (H_{\rm c} + {\wt V_{\om}})^{-1} = H_{\rm c}^{-1} - (H_{\rm c} + {\wt V_{\om}})^{-1}{\wt V_{\om}}H_{\rm c}^{-1},
\]
we see that
\begin{equation}
    u_L := H_L^{-1} 1 = (H_{\rm c} + {\wt V_{\om}})^{-1} 1 \\
    = u_{\rm c} + \sum_{n \geq 1} (-1)^n(H_{\rm c}^{-1} {\wt V_{\om}})^{n} u_{\rm c}, \label{eqn:H_L-expand}
\end{equation}
whenever the serious converges. Using this series expansion, we show that the following Lemma holds. Let 
\begin{equation}
    F(x) := \int_0^x {\wt V_{\om}}(y) dy \label{eqn:F-anti-d-def} . 
\end{equation}

\begin{lemma} \label{lem:u-T-first-order}
Assume that $\|F\|_2 \ll (1+\sqrt{{\gamma_{\rm c}}})^{-1}$,
then 
\begin{equation*}
    \|u_L - u_{\rm c}\|_{H^1} \lesssim (1+\sqrt{{\gamma_{\rm c}}})\|F\|_2\|u_{\rm c}\|_{H^1}.
\end{equation*}
\end{lemma}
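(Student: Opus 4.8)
The plan is to control the Neumann-type series \eqref{eqn:H_L-expand} term by term in the $H^1$-norm, reducing everything to the operator norm of $H_{\rm c}^{-1}\wt V_\om$ acting from $H^1([0,1])$ to itself. The key algebraic observation is that, since $\wt V_\om = F'$ with $F$ defined in \eqref{eqn:F-anti-d-def}, one can integrate by parts: for any test function $\phi$, $\langle \wt V_\om g, \phi\rangle = -\langle F, (g\phi)'\rangle$, so the rough (merely $L^\infty$, highly oscillatory) potential $\wt V_\om$ only ever enters through its antiderivative $F$, which is small in $L^2$ by hypothesis. Concretely, I would first establish the bound
\[
  \|H_{\rm c}^{-1}\wt V_\om g\|_{H^1} \lesssim (1+\sqrt{\gamma_{\rm c}})\,\|F\|_2\,\|g\|_{H^1},
\]
so that the operator $T:= H_{\rm c}^{-1}\wt V_\om$ has $\|T\|_{H^1\to H^1}\lesssim (1+\sqrt{\gamma_{\rm c}})\|F\|_2 =: \theta$, which is $\ll 1$ precisely under the assumption $\|F\|_2 \ll (1+\sqrt{\gamma_{\rm c}})^{-1}$.

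To get this bound I would use the explicit structure of $H_{\rm c}= -\Delta+\gamma_{\rm c}$ on $[0,1]$ with Dirichlet conditions: its Green's function $G_{\gamma_{\rm c}}(x,y)$ and its $x$-derivative are bounded, with the relevant norms scaling like $(1+\sqrt{\gamma_{\rm c}})^{-1}$ and $O(1)$ respectively (this is where the $(1+\sqrt{\gamma_{\rm c}})$ prefactor comes from — heuristically, $H_{\rm c}^{-1}$ gains two derivatives, one is ``spent'' converting $\wt V_\om$ into $F$, the remaining smoothing is worth a factor $(1+\sqrt{\gamma_{\rm c}})^{-1}$ in $L^2\to H^1$, and we lose $(1+\sqrt{\gamma_{\rm c}})$ back when we also need to bound $g$ in $H^1$ rather than $L^2$ after the integration by parts produces a $g'$ term). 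Writing $w = H_{\rm c}^{-1}\wt V_\om g$, equivalently $(-\Delta+\gamma_{\rm c})w = \wt V_\om g$ with $w(0)=w(1)=0$, I pair with $w$ itself: $\|w'\|_2^2 + \gamma_{\rm c}\|w\|_2^2 = \langle \wt V_\om g, w\rangle = -\langle F, g'w + gw'\rangle \le \|F\|_2(\|g'\|_2\|w\|_\infty + \|g\|_\infty\|w'\|_2)$; then bound $\|w\|_\infty\lesssim \|w'\|_2$ and $\|g\|_\infty\lesssim\|g\|_{H^1}$ by Sobolev embedding on $[0,1]$, and absorb. A short computation yields $\|w\|_{H^1}\lesssim (1+\sqrt{\gamma_{\rm c}})\|F\|_2\|g\|_{H^1}$ after using $\|w\|_{H^1}^2 \le (1+\gamma_{\rm c}^{-1})(\|w'\|_2^2+\gamma_{\rm c}\|w\|_2^2)$ or, more simply, that $\|w\|_{H^1}\lesssim \|w'\|_2$ via Poincaré.

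With $\|T\|_{H^1\to H^1}\le \theta < 1/2$ in hand, the series \eqref{eqn:H_L-expand} converges in $H^1$ and
\[
  \|u_L - u_{\rm c}\|_{H^1} = \Big\| \sum_{n\ge 1}(-1)^n T^n u_{\rm c}\Big\|_{H^1} \le \sum_{n\ge1}\theta^n\|u_{\rm c}\|_{H^1} = \frac{\theta}{1-\theta}\|u_{\rm c}\|_{H^1} \lesssim \theta\,\|u_{\rm c}\|_{H^1},
\]
which is exactly the claimed estimate $\|u_L-u_{\rm c}\|_{H^1}\lesssim (1+\sqrt{\gamma_{\rm c}})\|F\|_2\|u_{\rm c}\|_{H^1}$. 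One subtlety to check is that the first term $n=1$ is genuinely $Tu_{\rm c}$ and not something requiring $u_{\rm c}\in H^1$ only — but $u_{\rm c}\in H^1$ is provided by Lemma \ref{lem:u-c-lemma}, so this is fine; similarly one should note $u_{\rm c}\in H^1_0$ is needed so that $T$ applies, which holds by the Dirichlet condition.

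The main obstacle I anticipate is the careful bookkeeping of the $\gamma_{\rm c}$-dependence in the elliptic estimate for $H_{\rm c}^{-1}$: one must make sure the constants are uniform in $\gamma_{\rm c}$ over the whole range $\gamma_{\rm c}\in(0,\infty)$ (both $\gamma_{\rm c}\to0$, where $H_{\rm c}^{-1}\to(-\Delta)^{-1}$, and $\gamma_{\rm c}\to\infty$, where the $\gamma_{\rm c}\|w\|_2^2$ term dominates) so that the single clean factor $(1+\sqrt{\gamma_{\rm c}})$ captures the correct behavior in both regimes. The integration-by-parts trick is what makes the whole argument work — without it, $\wt V_\om = kL^2(V_{\om,L}-\E\om)$ is $O(kL^2)$ in $L^\infty$, far too large, whereas $\|F\|_2$ is small because $F$ is the antiderivative of a mean-zero oscillatory function and will be estimated (in the next subsection, via Hoeffding) to be of order $kL^2\cdot L^{-1}\cdot(\text{fluctuation})$, i.e. genuinely small under the hypothesis $1+\gamma_{\rm c}\ll L^\beta$.
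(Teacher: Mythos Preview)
Your proposal is correct and the overall architecture---prove an $H^1\to H^1$ bound for $T=H_{\rm c}^{-1}\wt V_\om$ via integration by parts against $F$, then sum the Neumann series \eqref{eqn:H_L-expand}---is the same as the paper's. The route to the key operator bound, however, is genuinely different. The paper writes down the explicit Green's kernel of $H_{\rm c}=-\Delta+\gamma_{\rm c}$ on $[0,1]$, integrates by parts inside the kernel representation to replace $\wt V_\om$ by $F$, differentiates in $x$, and then carries out a somewhat delicate pointwise $L^\infty$ estimate on the resulting hyperbolic kernels (the terms the paper calls $I_1,I_2$), treating large and small $\gamma_{\rm c}$ separately. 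Your energy argument---pair $(-\Delta+\gamma_{\rm c})w=\wt V_\om g$ with $w$, integrate the right-hand side by parts, and absorb via Sobolev/Poincar\'e---is considerably more elementary and avoids all kernel computations.

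In fact your method is slightly sharper than you realize: following your own steps, $\|w'\|_2^2+\gamma_{\rm c}\|w\|_2^2 \lesssim \|F\|_2\|g\|_{H^1}\|w'\|_2$ already gives $\|w'\|_2\lesssim \|F\|_2\|g\|_{H^1}$, and then Poincar\'e (since $w\in H^1_0$) gives $\|w\|_{H^1}\lesssim\|w'\|_2$. So you obtain $\|T\|_{H^1\to H^1}\lesssim\|F\|_2$ \emph{without} the prefactor $(1+\sqrt{\gamma_{\rm c}})$ that the paper's kernel computation incurs. This of course still implies the stated lemma; it simply means your ``bookkeeping of the $\gamma_{\rm c}$-dependence'' worry is moot with the energy approach. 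The paper's route has the minor advantage of giving pointwise control of $w$ and $w'$ directly, but for the purposes of this lemma your argument is both shorter and cleaner.
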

\begin{proof}
We denote by $A=\sqrt{{\gamma_{\rm c}}}$ for simplicity. 
To compute the series \eqref{eqn:H_L-expand}, we note that the explicit integral kernel of $H_{\rm c}^{-1}=(-\Delta+A^2)^{-1}$ is
\[
    (H_{\rm c}^{-1} f)(x) =  \frac{\sinh(Ax)}{A\sinh(A)}\int_0^1 \sinh(A(1-y))f(y) dy - \frac{1}{A}\int_0^x \sinh(A(x-y))f(y) dy .
\]
We integrate by parts to get
\begin{multline*}
  \int_0^x \sinh(A(x-y))f(y){\wt V_{\om}}(y) dy  
    =  \sinh(A(x-y))f(y)F(y) \mid_{y=0}^{y=x} \\
     + \int_0^x \big(A\cosh(A(x-y))f(y)-\sinh(A(x-y))  \nabla f(y)\big)F(y) dy. \\
    =  \int_0^x \big(A\cosh(A(x-y))f(y)-\sinh(A(x-y)) \nabla f(y)\big)F(y) dy.  
\end{multline*}
For notation simplicity, let 
\[s_x(y)  := \sinh(A(x-y))H(x-y), \ \ \ 
    c_x(y)  := \cosh(A(x-y))H(x-y), \]
where $H$ is the Heaviside function. We can rewrite
\[
    (H_{\rm c}^{-1}{\wt V_{\om}}f)(x) = \frac{\sinh(Ax)}{\sinh(A)} \lan c_1 f -A^{-1} s_1  \nabla f, F \ran - \lan c_x f -A^{-1} s_x   \nabla f, F \ran.
\]

This allows us to complete the following estimate.
\begin{lemma} \label{lem:useful-series-bound}
Assume that $f \in H^1([0,1])$, then
\begin{equation*}
    \| H_{\rm c}^{-1}{\wt V_{\om}}f \|_{H^1} \lesssim (1+A)\|f\|_{H^1}\|F\|_2
\end{equation*}
\end{lemma}
\begin{proof}
We prove the bound for the derivative term in $H^1$ only since the $L^2$ term is similar. Taking a derivative in $x$, we see that
\begin{multline*}
   \nabla (H_{\rm c}^{-1}{\wt V_{\om}}f)(x) =  \frac{\cosh(Ax)}{\sinh(A)} \lan A c_1 f - s_1 \nabla f, F \ran - \lan A s_x f - c_x \nabla f, F \ran  
     - f(x) F(x) \\
    =  A\lan I_1, fF\ran - \lan I_2, \nabla f F\ran - f(x)F(x),  
\end{multline*}
where
\begin{align*}
    I_1 =& \frac{\cosh(Ax)}{\sinh(A)} \cosh(A(1-y)) - \sinh(A(x-y)) H(x-y) \\
    I_2 = & \frac{\cosh(Ax)}{\sinh(A)} \sinh(A(1-y)) - \cosh(A(x-y)) H(x-y) . 
\end{align*}
To proceed, we estimate the $L^\infty$ norm (in $y$ first, then in $x$) of $I_1$ and $I_2$. However, we will prove the case for $I_2$ only as that of $I_1$ is similar. Let $x, y \in [0,1]$. If $y < x$,
\begin{align*}
    2I_2 =& \frac{(e^{Ax}+e^{-Ax})(e^{A-Ay}-e^{-A+Ay})}{e^A-e^{-A}} - e^{Ax-Ay}-e^{-Ax+Ay} \\
    =& e^{A(x-y)} \left( \frac{(1+e^{-2Ax})(1-e^{-2A(1-y)})}{1-e^{-2A}} -1 - e^{-2A(x-y)} \right).
\end{align*}
If $A$ is small, clearly $I_2 \lesssim 1$. If $A$ is large, since $y < x$, we see that $I_2$ can be bounded by leading order terms in the Taylor expansion of its right hand side:
\begin{multline*}
    I_2 \lesssim  e^{A(x-y)}\big(e^{-2Ax}+e^{-2A(1-y)}+e^{-2A} - e^{-2A(x-y)} \big) \\
    =  e^{-A(x+y)} + e^{-A(2-x-y)}+e^{-A(2-x+y)} + e^{-A(x-y)}  
    \lesssim  1.
\end{multline*}

If $y > x$, then the heavisdie function is $0$. So
\[
  2I_2 = e^{A(x-y)} \left( \frac{(1+e^{-2Ax})(1-e^{-2A(1-y)})}{1-e^{-2A}} \right) 
    \lesssim e^{A(x-y)} 
    \lesssim1.
\]
Similar computation also implies $I_1\lesssim 1$. 
By Sobolev's inequality in 1D, it follows that
\[
    |\nabla (H_{\rm c}^{-1} {\wt V_{\om}} f)| \lesssim (1+A) \|f\|_{H^1}\|F\|_2.
\]
Hence,
\[
    \|H_{\rm c}^{-1} {\wt V_{\om}} f\|_{H^1} \lesssim (1+A)\|f\|_{H^1}\|F\|_2, 
\]
as claimed. The proof of Lemma \ref{lem:useful-series-bound} is complete.
\end{proof}

It follows by Lemma \ref{lem:useful-series-bound} and equation \eqref{eqn:H_L-expand} that
\[
    \|H_L^{-1} 1 - u_{\rm c}\|_{H^1} \lesssim \sum_{n\geq 1} (1+A)^n\|F\|_2^n\|u_{\rm c}\|_{H^1}.
\]
The proof of Lemma \ref{lem:u-T-first-order} is complete.
\end{proof}

Finally, we show that $F$ can be controlled.

\begin{lemma} \label{lem:F-bound-high-prob}
There is a constant $C_1$ only depending on the range of $\omega$ and a constant $C_2>0$ only depending on $\E(\omega)$. For any $0 < a < \frac{1}{2}$,
\[
    \|F\|_2 \le C_2 \gamma_{\rm c} L^{-a}
\]
with probability at least $1-e^{-C_1\, L^{(1-2a)^2}}$ as $L \rightarrow \infty$.
\end{lemma}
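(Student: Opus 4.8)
The plan is to recognize $F$ as a rescaled one-dimensional random walk and then close the estimate with Hoeffding's inequality. First I would make the structure explicit: on the $j$-th subinterval $[(j-1)/L,\,j/L)$ one has $V_{\om,L}(y)=V_\om(Ly)=\om_j$, so $\wt V_\om(y)=kL^2(\om_j-\E(\om))$ is constant there, and therefore $F(x)=\int_0^x\wt V_\om$ is the continuous, piecewise-linear function with nodal values
\[
 F(m/L)=kL\sum_{j=1}^m\big(\om_j-\E(\om)\big)=:kL\,S_m,\qquad m=0,1,\dots,L,
\]
where $S_0=0$. A piecewise-linear function attains its extreme values at its nodes, so $\|F\|_{L^\infty([0,1])}=kL\max_{0\le m\le L}|S_m|$ and in particular $\|F\|_2\le\|F\|_\infty=kL\max_{0\le m\le L}|S_m|$. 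Since $\gamma_{\rm c}=kL^2\,\E(\om)$, the desired inequality $\|F\|_2\le C_2\gamma_{\rm c}L^{-a}$ is implied by
\[
 \max_{0\le m\le L}|S_m|\le C_2\,\E(\om)\,L^{1-a}.
\]

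Next I would bound this event probabilistically. Here $S_m$ is a sum of $m$ i.i.d.\ centered random variables, each taking values in an interval of length $R$, where $\om\in[0,R]$ almost surely; this is the only point at which the boundedness (the ``range'') of $\om$ enters. Hoeffding's inequality gives $\P(|S_m|\ge t)\le 2e^{-2t^2/(mR^2)}\le 2e^{-2t^2/(LR^2)}$ for each $m\le L$, and a union bound over $m=1,\dots,L$ yields
\[
 \P\Big(\max_{0\le m\le L}|S_m|\ge t\Big)\le 2L\,e^{-2t^2/(LR^2)}.
\]
Choosing $t=C_2\,\E(\om)\,L^{1-a}$ turns the exponent into $\tfrac{2C_2^2(\E(\om))^2}{R^2}\,L^{1-2a}$, so the probability that $\|F\|_2>C_2\gamma_{\rm c}L^{-a}$ is at most $2L\exp\!\big(-c\,L^{1-2a}\big)$ with $c=c(R,\E(\om))>0$.

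It remains to reconcile the exponents. Since $0<a<\tfrac12$ we have $(1-2a)^2<1-2a$, hence for all sufficiently large $L$ the quantity $c\,L^{1-2a}-\ln(2L)$ exceeds $C_1 L^{(1-2a)^2}$ for any prescribed $C_1>0$; picking $C_1$ depending only on $R$ and absorbing the $\E(\om)$-dependence into $C_2$ gives $2L\exp(-c\,L^{1-2a})\le e^{-C_1 L^{(1-2a)^2}}$, which is the claim. I do not expect a genuine obstacle here: the entire content is the identification of the antiderivative $F$ with a random walk together with a routine Hoeffding plus union bound argument. The only mildly delicate points are tracking which constant depends on which parameter and the deliberate, harmless weakening of the natural exponent $L^{1-2a}$ to $L^{(1-2a)^2}$, chosen so that the estimate plugs directly into Lemma~\ref{lem:u_L-main-lemma} and, through Lemma~\ref{lem:u-T-first-order}, controls $\|u_L-u_{\rm c}\|_{H^1}$.
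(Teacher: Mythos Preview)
Your proposal is correct and rests on the same core ingredients as the paper (identify $F$ with a centered random walk, apply Hoeffding, union-bound), but your execution is actually cleaner than the paper's. You pass through $\|F\|_2\le\|F\|_\infty$ on $[0,1]$ and bound $\max_m|S_m|$ against a single threshold $t\sim L^{1-a}$, which immediately gives failure probability $\lesssim L\,e^{-c L^{1-2a}}$; you then deliberately weaken the exponent to $L^{(1-2a)^2}$. The paper instead estimates $\|F\|_2^2$ by a Riemann sum $\tfrac{2}{L}\sum_n|F(n/L)|^2$, splits the sum at $n_0=L^{1-2a}$ (trivially bounding $|S_n-\E S_n|\lesssim L$ for $n\le n_0$), and on $n>n_0$ applies Hoeffding with the $n$-dependent threshold $n^{1-a}$; the union bound over $n\ge n_0$ then produces the exponent $n_0^{\,1-2a}=L^{(1-2a)^2}$ directly. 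Your route avoids the split and yields a strictly stronger tail bound before weakening; the paper's route explains why the exponent $L^{(1-2a)^2}$ appears in the statement at all. One small caveat: your exponent constant $c=2C_2^2(\E\omega)^2/R^2$ depends on $\E(\omega)$ as well as on the range $R$, so the clean separation ``$C_1$ depends only on the range, $C_2$ only on $\E(\omega)$'' is not quite achieved---but the paper's proof is equally loose on this point (it normalizes $\E(\omega)=1$), and nothing downstream is sensitive to it.
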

\begin{proof}
Let $x \in [0, 1]$ be an integer multiple of $1/L$: $x = n/L$ for $n \in \Z\cap [0,L]$. Let $V_{\om}$ be the piecewise constant potential with i.i.d. random coefficients $\omega_j$ as in \eqref{eq:V}. We assume $\omega_j$'s are nonegative and bounded from above. Hence, its expectation is finite and positive: $0<\E(\omega)<\infty$. Without loss of generality, we assume $\E(\omega)=1$. 
Recall the definitions of $V_{\om,L}$ and $\gamma_{\rm c}$ in \eqref{eqn:H-L-def} and \eqref{eq:gamma-c-def}, we have $\gamma_{\rm c}=kL^2$ and $V_{\om,L}(x)=kL^2(V_{\om}(Lx)-1)$.  
Let $S_n=\omega_1\cdots+\omega_n$. By definition \eqref{eqn:F-anti-d-def},
\begin{multline*}
    F\left(\frac{n}{L}\right)=\int_0^{n/L} {\wt V_{\om}}(y) dy= \gamma_{\rm c} \int_0^{n/L} { V_{\om}}(Ly)-1 \,  dy\\=\frac{ \gamma_{\rm c}}{L }\int_0^{n} { V_{\om}}(y)-1 \,  dy
    =\frac{ \gamma_{\rm c}}{L }\big(S_n-\E(S_n)\big).
\end{multline*}
Using a Riemann sum approximation, it follows that if $L$ is sufficiently large,
\begin{multline*}
     \int_0^1 |F(x)|^2 dx \leq  \frac{2}{L}\sum_{n=1}^L |F(n/L)|^2 \\\le \frac{2}{L}\sum_{n=1}^L \left(\frac{ \gamma_{\rm c}}{L }\big(S_n-\E(S_n)\big)\right)^2 
    =   \frac{ 2\gamma_{\rm c}^2}{L^3 } \sum_{n=1}^L \big(S_n-\E(S_n)\big)^2.
\end{multline*}
Since $\omega$ is bounded from above, $|S_n-\E(S_n)|\lesssim n \le L$. Fix $0<a<1/2$ and let $n_0=L^{1-2a}$. Then 
\begin{multline}
     \int_0^1 |F(x)|^2 dx \lesssim    \frac{ 2\gamma_{\rm c}^2}{L^3 } \sum_{n=1}^{n_0} L^2+\frac{ 2\gamma_{\rm c}^2}{L^3 } \sum_{n=n_0}^L \big(S_n-\E(S_n)\big)^2\\
   \le     \gamma_{\rm c}^2 L^{-2a}+\frac{ 2\gamma_{\rm c}^2}{L^3 } \sum_{n=n_0}^L \big(S_n-\E(S_n)\big)^2. \label{eq:tmp91}
\end{multline}
Let
\[
    E_{n} = \Big\{\,  \left|S_n-\E(S_n) \right| \ge  n^{1-a} \,  \Big\}.
\]
Since $\omega_1,\cdots,\omega_n$ are bounded independent random variables, Chernoff–Hoeffding's inequality (see e.g. \cite{H63}) implies that
\begin{equation}\label{eq:PEL}
    \P\left( E_{ n} \right) \le e^{-C\, \frac{n^{2(1-a)}}{n}}=e^{-C\, n^{1-2a} }
\end{equation}
for some constant $C$ only depends on the range of $\omega$. 
Let 
\begin{equation*}
    \mathcal E_L=\Big(E_{n_0}  \cup E_{n_0+1}  \cdots \cup E_{L} \Big)^C.
\end{equation*}
For $0<a<1/2$, we note that
\begin{equation}\label{eq:calEL}
    \P(  \mathcal E_L^C)=\P(E_{n_0}  \cup E_{n_0+1}  \cdots \cup E_{L} ) 
  \le \sum_{n=n_0}^\infty e^{-C\, n^{1-2a}}\lesssim e^{-C\, n_0^{1-2a}}=e^{-C\, L^{(1-2a)^2}}
\end{equation}
approaches $0$ as $L \rightarrow \infty$. 
On the set $\mathcal E_L$, the last sum in \eqref{eq:tmp91} can be bounded by
\[ \frac{ 2\gamma_{\rm c}^2}{L^3 } \sum_{n=n_0}^L \big(S_n-\E(S_n)\big)^2\le \frac{ 2\gamma_{\rm c}^2}{L^3 }\, L \, (L^{1-a})^2 \le 2\gamma_{\rm c}^2 L^{-2a}. \]
Putting all together, 
\begin{equation*}
    \|F\|_{2} \lesssim \gamma_{\rm c} L^{-a} 
\end{equation*}
on the set $\mathcal E_L$ with $\P(\mathcal E_L)\ge 1-e^{-C\, L^{(1-2a)^2}}$ and $L$ is sufficiently large. Thus, we have the proved Lemma \ref{lem:F-bound-high-prob}. 
\end{proof}

We now complete the proof of Lemma \ref{lem:u_L-main-lemma}. 
For $1+\gamma_{\rm c}<CL^{\beta}$, let $a=\beta+1/4$. Combing Lemma \ref{lem:u-c-lemma}, Lemma \ref{lem:u-T-first-order}, Lemma \ref{lem:F-bound-high-prob} with this choice of $a$, on the set $\mathcal E_L$, we get
\begin{equation*}
   \|u_L - u_{\rm c}\|_{H^1} \lesssim (1+\sqrt{{\gamma_{\rm c}}})\gamma_{\rm c}L^{-a} \frac{1+\sqrt \gamma_{\rm c}}{\gamma_{\rm c}}\lesssim (1+\gamma_{\rm c})L^{-a}. 
\end{equation*}
The explicit formula \eqref{eq:uc-sup} of $\max u_{\rm c}$ implies $\max u_{\rm c}\ge 8+\gamma_{\rm c}$. Combined with the Sobolev estimate $\|f\|_\infty \leq C\|f\|_{H^1}$ in 1D, one gets on $\mathcal E_L$ 
\begin{equation}\label{eq:tmp96}
 \Big | \frac{\max_{x\in[0,1]} u_L}{\max_{x\in[0,1]} u_{\rm c}}-1 \Big| \lesssim \frac{1}{\max u_{\rm c}}\, \|u_L - u_{\rm c}\|_{H^1}\lesssim  (1+\gamma_{\rm c})^2L^{-a} \lesssim L^{2\beta-a}=L^{\beta-1/4}   
\end{equation}
which completes the proof for \eqref{eq:uc-uL-probH}. 

Finally, let 
\[\mathcal E_\infty=\liminf_{L\to \infty} \mathcal E_L:=\bigcup_{n=1}^\infty\bigcap_{L=n}^\infty \mathcal E_L.\]
On $\mathcal E_\infty$, 
 there is $n_*$ such that  \eqref{eq:tmp96} holds for all $L\ge n_\ast$. Taking the limit as $L\to \infty$ gives
\eqref{eq:uc-uL-prob1}. 

By \eqref{eq:PEL} and \eqref{eq:calEL}, \[\sum_{L=1}^\infty\P (\mathcal E_L^C)<\infty.\]
As a direct consequence of the Borel–Cantelli lemma, the set 
\[ \mathcal E_\infty^C=\bigcap_{n=1}^\infty\bigcup_{L=n}^\infty \mathcal E_L^C\]
has probability zero, i.e., $\mathcal E_\infty$ has probability one.
\end{proof}

\subsubsection{Estimates for the ground state energy} \label{subsec:weak-p-lambd-estimate}
Recall that $\gamma_{\rm c}$ is given in \eqref{eq:gamma-c-def} and $L$ denotes the length of the underlying domain $[0,L]$
\begin{lemma} \label{lem:HL-lambda}
Assume that 
\[
   1+ \gamma_{\rm c}<C L^\beta
\]
for some $C>0$ and $\beta < \frac{1}{2}$. Then there are constants $C_1,C_2>0$ such that the ground state eigenvalue, $\lambda$, of $H_L$ (see \eqref{eqn:H-L-def}) satisfies
\begin{equation}\label{eq:lambda-probH}
    \left|\frac{\lambda}{\gamma_{\rm c} +\pi^2} -1\right| \le C_2 L^{-(1/2-\beta)/2}
\end{equation}
with probability  $1-e^{-C_1\, L^{(1/2-\beta)^2}}$ as $L \rightarrow \infty$.

As a direct consequence, 
\begin{equation}\label{eq:lambda-prob1}
   \lim_{L\to\infty}  \frac{\lambda}{\gamma_{\rm c} +\pi^2} =1 
\end{equation}
with probability one. 
\end{lemma}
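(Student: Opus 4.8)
The plan is to obtain matching two-sided bounds on $\lambda$ by treating $H_L$ as a perturbation of $H_{\rm c}=-\Delta+\gamma_{\rm c}$, using the decomposition \eqref{eqn:H-T-decomp}, $H_L=H_{\rm c}+\wt V_{\om}$ with $\wt V_{\om}=kL^2(V_{\om,L}-\E(\om))$. The point that makes this work is that although $\|\wt V_{\om}\|_\infty\sim kL^2$ is far larger than the target $\gamma_{\rm c}+\pi^2\sim L^\beta$, the perturbation oscillates and its antiderivative $F$ from \eqref{eqn:F-anti-d-def} is small: by Lemma \ref{lem:F-bound-high-prob} with the choice $a=\tfrac14+\tfrac\beta2\in(0,\tfrac12)$ (admissible since $\beta<\tfrac12$), one has $\|F\|_2\lesssim\gamma_{\rm c}L^{-1/4-\beta/2}$ on an event $\mathcal E_L$ of probability at least $1-e^{-C_1L^{(1-2a)^2}}=1-e^{-C_1L^{(1/2-\beta)^2}}$. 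Throughout I would integrate by parts: for $g\in H^1_0([0,1])$, using $F(0)=0$ and that $g$ vanishes at the endpoints,
\[
\langle g,\wt V_{\om}g\rangle=\int_0^1 F'(x)g(x)^2\,dx=-2\int_0^1 F(x)g(x)g'(x)\,dx,
\]
so $|\langle g,\wt V_{\om}g\rangle|\le 2\|F\|_2\,\|g\|_\infty\,\|g'\|_2$.

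For the upper bound I would test the Rayleigh quotient for $\lambda$ against the normalized ground state $\phi_{\rm c}(x)=\sqrt2\sin(\pi x)$ of $H_{\rm c}$, which has energy $\pi^2+\gamma_{\rm c}$ and satisfies $\|\phi_{\rm c}\|_\infty,\|\phi_{\rm c}'\|_2\lesssim1$. This gives $\lambda\le\langle\phi_{\rm c},H_L\phi_{\rm c}\rangle=(\pi^2+\gamma_{\rm c})+\langle\phi_{\rm c},\wt V_{\om}\phi_{\rm c}\rangle\le(\pi^2+\gamma_{\rm c})+C\|F\|_2$, hence on $\mathcal E_L$, using $\gamma_{\rm c}/(\pi^2+\gamma_{\rm c})\le1$ and $\gamma_{\rm c}\le CL^\beta$, one gets $\lambda\le(\pi^2+\gamma_{\rm c})\bigl(1+CL^{-(1/2-\beta)/2}\bigr)$. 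Moreover, bounding crudely $\langle\phi_{\rm c},kL^2V_{\om,L}\phi_{\rm c}\rangle\le kL^2\sup\omega$, one records the \emph{deterministic} a priori bound $\lambda\lesssim\pi^2+\gamma_{\rm c}\lesssim L^\beta$, which is needed in the next step.

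For the lower bound let $\psi$ be the normalized ground state of $H_L$; then $\psi\in H^2\cap C^1$, and since $kL^2V_{\om,L}\ge0$ one has $\|\psi'\|_2^2\le\langle\psi,H_L\psi\rangle=\lambda$, while the $1$-d Sobolev embedding gives $\|\psi\|_\infty\lesssim\|\psi\|_{H^1}\le\sqrt{1+\lambda}$. Because $H_{\rm c}\ge\pi^2+\gamma_{\rm c}$ as a quadratic form on $H^1_0([0,1])$ (Poincaré),
\[
\lambda=\langle\psi,H_{\rm c}\psi\rangle+\langle\psi,\wt V_{\om}\psi\rangle\ge(\pi^2+\gamma_{\rm c})-2\|F\|_2\,\|\psi\|_\infty\,\|\psi'\|_2\ge(\pi^2+\gamma_{\rm c})-C\|F\|_2(1+\lambda).
\]
Substituting the a priori bound $1+\lambda\lesssim L^\beta$ and $\|F\|_2\lesssim\gamma_{\rm c}L^{-1/4-\beta/2}$ on $\mathcal E_L$, the error term is $\lesssim\gamma_{\rm c}L^{\beta-1/4-\beta/2}=\gamma_{\rm c}L^{-(1/2-\beta)/2}$, so $\lambda\ge(\pi^2+\gamma_{\rm c})\bigl(1-CL^{-(1/2-\beta)/2}\bigr)$. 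Together with the upper bound this is \eqref{eq:lambda-probH}.

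Finally, \eqref{eq:lambda-prob1} follows from Borel–Cantelli exactly as in the proof of Lemma \ref{lem:u_L-main-lemma}: since $(1/2-\beta)^2>0$ we have $\sum_L\P(\mathcal E_L^c)\le\sum_L e^{-C_1L^{(1/2-\beta)^2}}<\infty$, so almost surely \eqref{eq:lambda-probH} holds for all large $L$, and letting $L\to\infty$ yields $\lambda/(\gamma_{\rm c}+\pi^2)\to1$. The main obstacle is the lower bound: positivity cannot be used to discard $\wt V_{\om}$ and a naive sup-norm bound is hopeless, so everything hinges on the integration-by-parts trick converting $\wt V_{\om}$ into the small quantity $F$, combined with the self-improving use of the a priori bound $\lambda\lesssim L^\beta$; once these are in place the rest is matching powers of $L$ against Lemma \ref{lem:F-bound-high-prob}.
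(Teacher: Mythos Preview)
Your proposal is correct and follows essentially the same strategy as the paper: the decomposition $H_L=H_{\rm c}+\wt V_{\om}$, the integration-by-parts identity $\langle\psi,\wt V_{\om}\psi\rangle=-2\int F\,\psi\,\psi'$, the appeal to Lemma~\ref{lem:F-bound-high-prob} with $a=\tfrac14+\tfrac\beta2$ (the paper writes ``$a=\beta+1/2$'' but the probability exponent $(1/2-\beta)^2$ in the statement confirms your choice), and Borel--Cantelli for \eqref{eq:lambda-prob1}.

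The one place you diverge is in the lower bound. The paper bounds $\|\psi'\|_2\|\psi\|_\infty$ by $\|\psi\|_{H^1}^2\le\langle\psi,(1-\Delta)\psi\rangle\le(1+\gamma_{\rm c}^{-1})\langle\psi,H_{\rm c}\psi\rangle$, yielding a \emph{relative} form bound $|\langle\psi,\wt V_{\om}\psi\rangle|\lesssim\|F\|_2(1+\gamma_{\rm c}^{-1})\langle\psi,H_{\rm c}\psi\rangle$ valid for every $\psi\in H^1_0$; min--max then gives both directions at once. You instead bootstrap through the deterministic a~priori bound $\lambda\le\pi^2+kL^2\sup\omega\lesssim 1+\gamma_{\rm c}\lesssim L^\beta$ (which implicitly uses that $\omega$ is bounded above, but this is already needed for Hoeffding in Lemma~\ref{lem:F-bound-high-prob}). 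Both routes produce the same error $L^{-(1/2-\beta)/2}$; the paper's is a touch cleaner since it handles all test functions uniformly without the extra a~priori step.
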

\begin{proof}
We decompose $H_L = H_{\rm c} + {\wt V_{\om}}$ via \eqref{eqn:H-T-decomp} as before. 
Let $\psi \in H^1([0,1])$ satisfy the  Dirichlet boundary conditions. It follows that
\[
    \lan \psi, H_L \psi \ran =  \lan \psi, H_{\rm c} \psi \ran +\lan \psi, {\wt V_{\om}} \psi \ran .
\]
Integrating by part and using $F$ in \eqref{eqn:F-anti-d-def}, we see that
\[
    \lan \psi, {\wt V_{\om}} \psi \ran =  -2\Re \int_0^1 F  \bar \psi \nabla \psi.
\]
Using Cauchy-Schwartz and Sobolev embedding, we see that
\begin{multline*}
     |\lan \psi, {\wt V_{\om}}  \psi \ran| \leq  \|F\|_2\|\nabla \psi\|_2\|\psi\|_\infty\lesssim \|F\|_2\big(\|\nabla \psi\|_2^2+\|\psi\|_\infty^2\big) \\
     \lesssim \|F\|_2\big(\|\nabla \psi\|_2^2+\|\psi\|_{H^1}^2\big) \lesssim \|F\|_2\big(\|\nabla \psi\|_2^2+\|\psi\|_{2}^2\big)
   =  \|F\|_2 \lan \psi, (1-\Delta) \psi\ran.
\end{multline*}
It follows that
\[|\lan \psi, {\wt V_{\om}}  \psi \ran|
    \lesssim  \|F\|_2 ( \gamma_{\rm c}^{-1}+ 1) \lan \psi, H_{\rm c} \psi\ran. \]
    Hence,
\begin{equation}\label{eq:tmp99}
    \lan \psi, H_L \psi \ran =  \lan \psi, H_{\rm c} \psi \ran \big(1 + O(\gamma_{\rm c}^{-1}+ 1)\|F\|_2\big).
\end{equation}
The proof of \eqref{eq:lambda-probH} of Lemma \ref{lem:HL-lambda} is completed as a result of \eqref{eq:tmp99}, Lemma \ref{lem:F-bound-high-prob} with the choice of $a=\beta+1/2$ and the explicit expression of the first eigenvalue of $H_{\rm c}$. 

The proof for \eqref{eq:lambda-prob1} is  again based on the probability estimate of \eqref{eq:lambda-probH} and the Borel–Cantelli lemma, which is similar to the proof of \eqref{eq:uc-uL-prob1}. We omit the details here.   
\end{proof}

The proof of Theorem \ref{thm:33} is completed as a result of \eqref{eqn:lambda-u-lambda_L-u-L-equiv}, Lemma \ref{lem:u_L-main-lemma} and \ref{lem:HL-lambda}.

\subsection{Heuristic arguments for excited states energies for the Bernoulli case}\label{sec:excited}

In this section, we will discuss the observation \eqref{eq:first} for the  excited states $n\ge 2$. We restrict ourselves to $H=-\Delta+kV_{\om}$ given as \eqref{eq:H} with a Bernoulli-piecewise potential $V_{\om}$ taking values $0,1$, i.e., 
\begin{equation*}
    p=\P(\om=0),\ \ 1-p=\P(\om=1). 
\end{equation*}
Let $\lambda_n$ be the $n$-th smallest eigenvalue of $H$ under Dirichlet boundary conditions on $[0,L]$.
Denote the effective potential by
\[W=\frac{1}{u},\]
and the $n$-th local minimum of $W$ by $W_n$. 
With a lot of numerical evidence in Section \ref{sec:numerics}, we conclude 
\begin{equation}\label{eq:appro}
    \lambda_n\approx \frac{\pi^2}{8}W_n.
\end{equation}
 In Theorem \ref{thm:1}, we provide the rigorous proof of the approximation \eqref{eq:appro} for the ground state case when $n=1$. We now further justify  heuristically the approximation for the excited states when $n\ge 2$.

The sets $\{x|kV_{\om}(x)=k\}$ and $\{x|kV_{\om}(x)=0\}$ consist of finitely many intervals (connected components). We may call these intervals $k$-walls and zero wells, respectively. 
We denote by $I_i$ the $i$-th zero well with length $L_i$, arranged non-increasingly with respect to the length: $L_1\ge L_2\ge L_3\cdots$. As $k\to \infty$, $-\Delta+kV_{\om}$ can be approximated by the direct sum of (negative) free Laplacian $-\Delta$ on $I_i$ with  Dirichlet boundary conditions on $\partial I_i$. The energy levels of  $-\Delta$ on $I_i$ with  Dirichlet boundary conditions are simple:
\begin{equation}\label{eq:Eis}
    E_{i,s}=\frac{s^2\pi^2}{L_i^2},\ i=1,2,\cdots,\ s=1,2,\cdots.
\end{equation}
\begin{figure}[H]
    \centerline{
    \includegraphics[width=1.0\textwidth]{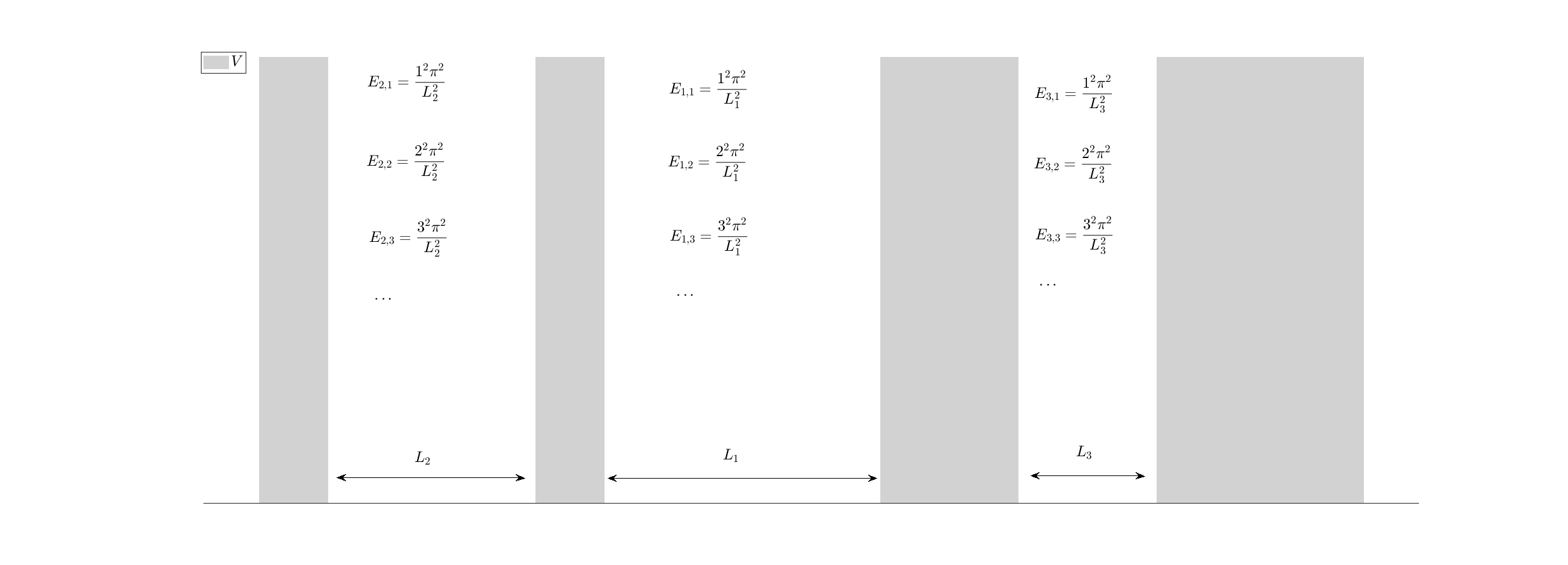} }
    \caption{The energy levels of $-\Delta$ for the first three longest zero wells.}
\end{figure}

Hence, the energy levels of $-\Delta+kV_{\om}$ can be approximated by the rearranging of $E_{i,s}$ in a  non-decreasing order. In particular, some bottom energy levels can be approximated by the first harmonics $E_{i,1}$ of \eqref{eq:Eis}:
\begin{equation*}
\lambda_i\approx \frac{\pi^2}{L_i^2},\ \ i=1,2,\cdots,i_0.
\end{equation*}

Let $u^i$ be the local landscape function for the free problem $-\Delta u^i=1$ on the $i$-th zero well $I_i$ with  Dirichlet boundary conditions. For a similar reason for the approximation of the eigenvalues, the restriction of the global landscape function $u$ on $I_i$ can be approximated by $u^i$, which implies 
\begin{equation*}
   W_i\approx \frac{1}{\max_{I_i} u}\approx \frac{1}{\max_{I_i} u^i}=\frac{8}{L_i^2},\ \ i=1,\cdots,i_0'.
\end{equation*}
Therefore, for excited states near the bottom of the spectrum, we have the approximation
\begin{equation}\label{eq:approx1}
    \frac{\lambda_i}{W_i}\approx \frac{\pi^2/L_i^2}{8/L_i^2}=\frac{\pi^2}{8}, \ i=1,\cdots,\min\{i_0,i_0'\}. 
\end{equation}

 In Section \ref{sec:numerics}, we will show numerical experiments to verify \eqref{eq:approx1}, and a generalized method to deal with eigenvalues contributed by the second, third, etc harmonics.


\section{Numerical experiments} \label{sec:numerics}

	In this section, we will display extensive numerical experiments to support our theory. Comparing with the notation $H=-\Delta+kV_{\om}$ we used in Section \ref{sec:theory}, we absorb the disorder strength $k$ into $V_{\om}$ in this section. More precisely, we will consider the 1-d Schr\"odinger operator $H=-\Delta+V_{\om}$ with a Bernoulli piecewise constant potential $V_{\om}$ on the domain $[0, L]$, with Dirichlet boundary conditions. Here $L$ is chosen as a  positive integer,
	and $[0, L]$ contains $L$ unit cells. The Bernoulli potential $V_{\om}$ is a piecewise constant potential as in \eqref{eq:V}. The $L$ random values of $V_{\om}$, chosen as either 0 or $V_{\max}$ with probability $p$ and $1-p$, are assigned to the $L$ unit cells independently. 
	Throughout this section, we still use $W=\dfrac{1}{u}$ to represent the effective potential. Denote  the global minimum of $W$ by $W_{\min}=\min \frac{1}{u}$.

	First we consider the domain [0, 10000], and the value of the potential $V_{\om}$ is either 0 or 10, each with probability 50\%. We test 100 different random realizations (Figure \ref{lambda1}): 
	
	\begin{figure}[H]
	 	\centerline
	 	{	\includegraphics[width=0.5\textwidth]{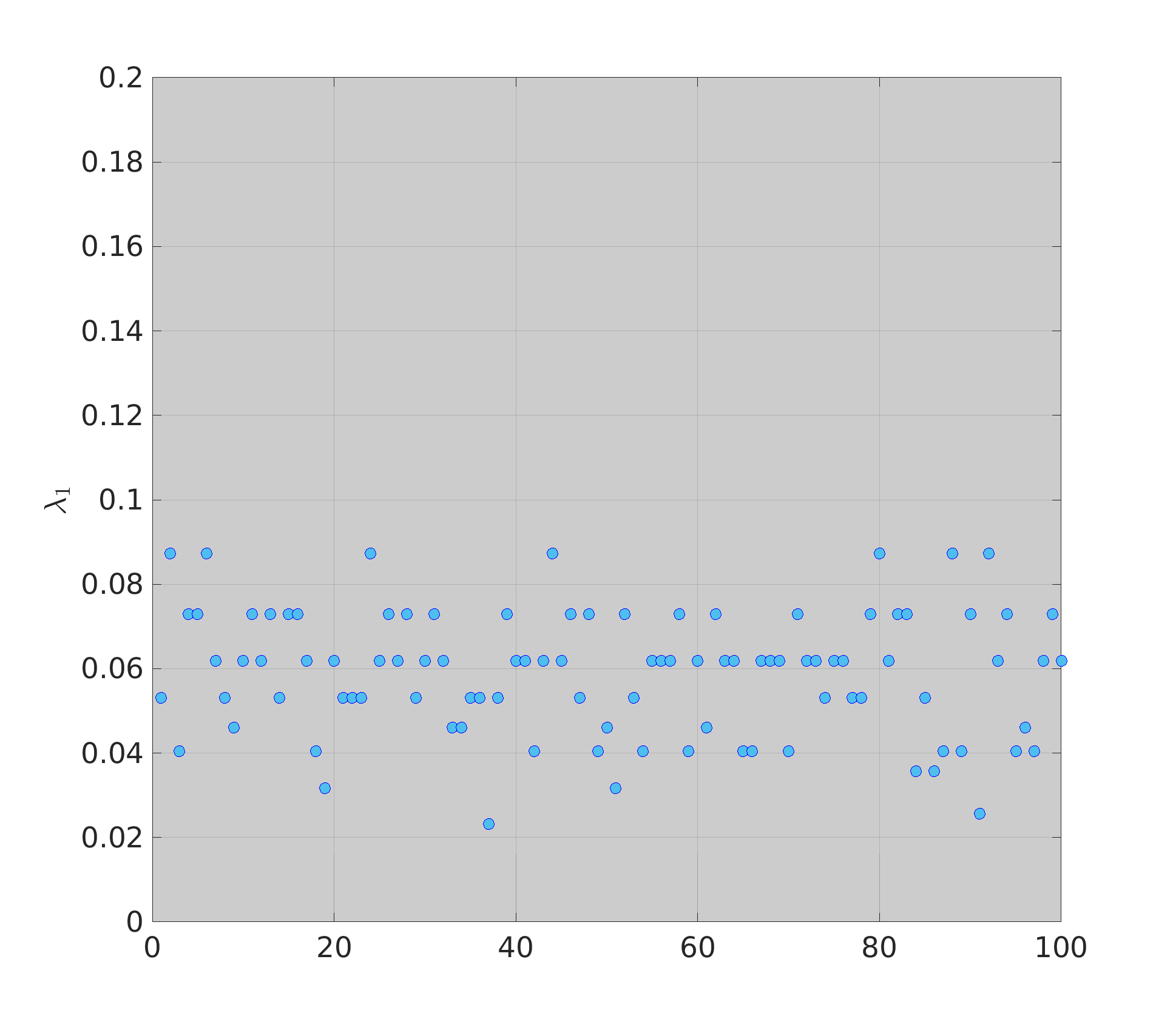}	}
	 	\caption{The ground state eigenvalues $\lambda_1$ from 100 independent realizations}
	 	\label{lambda1}
	 \end{figure}
	Then Figure \ref{BSeed} shows the ratio of  $\lambda_1$ over $W_{\min}$ and in each realization.  
		\begin{figure}[H]
		\centering
		\subfigure{
			\includegraphics[width=7.2 cm]{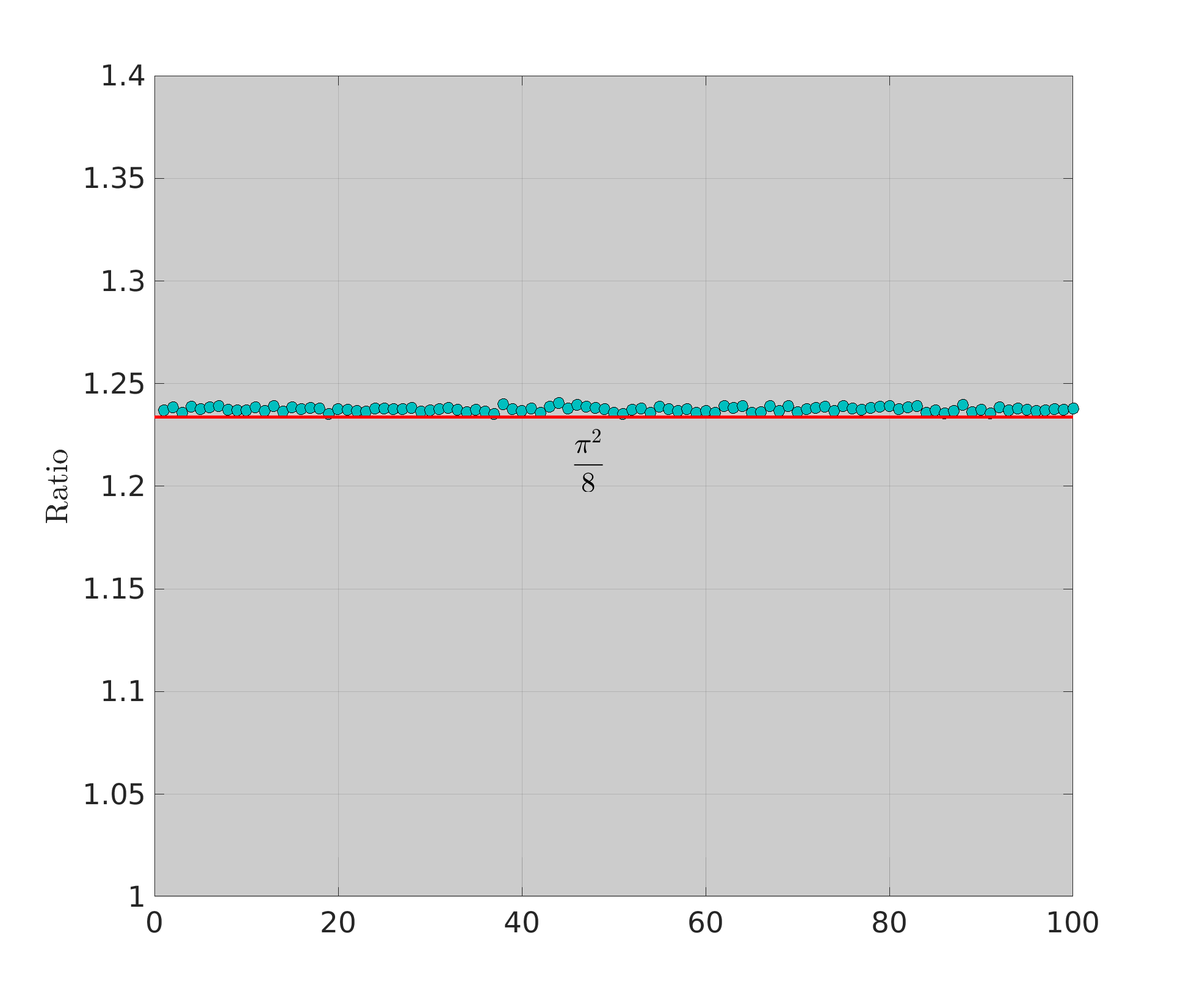}
			
		}
		\quad
		\subfigure{
			\includegraphics[width=7.2 cm]{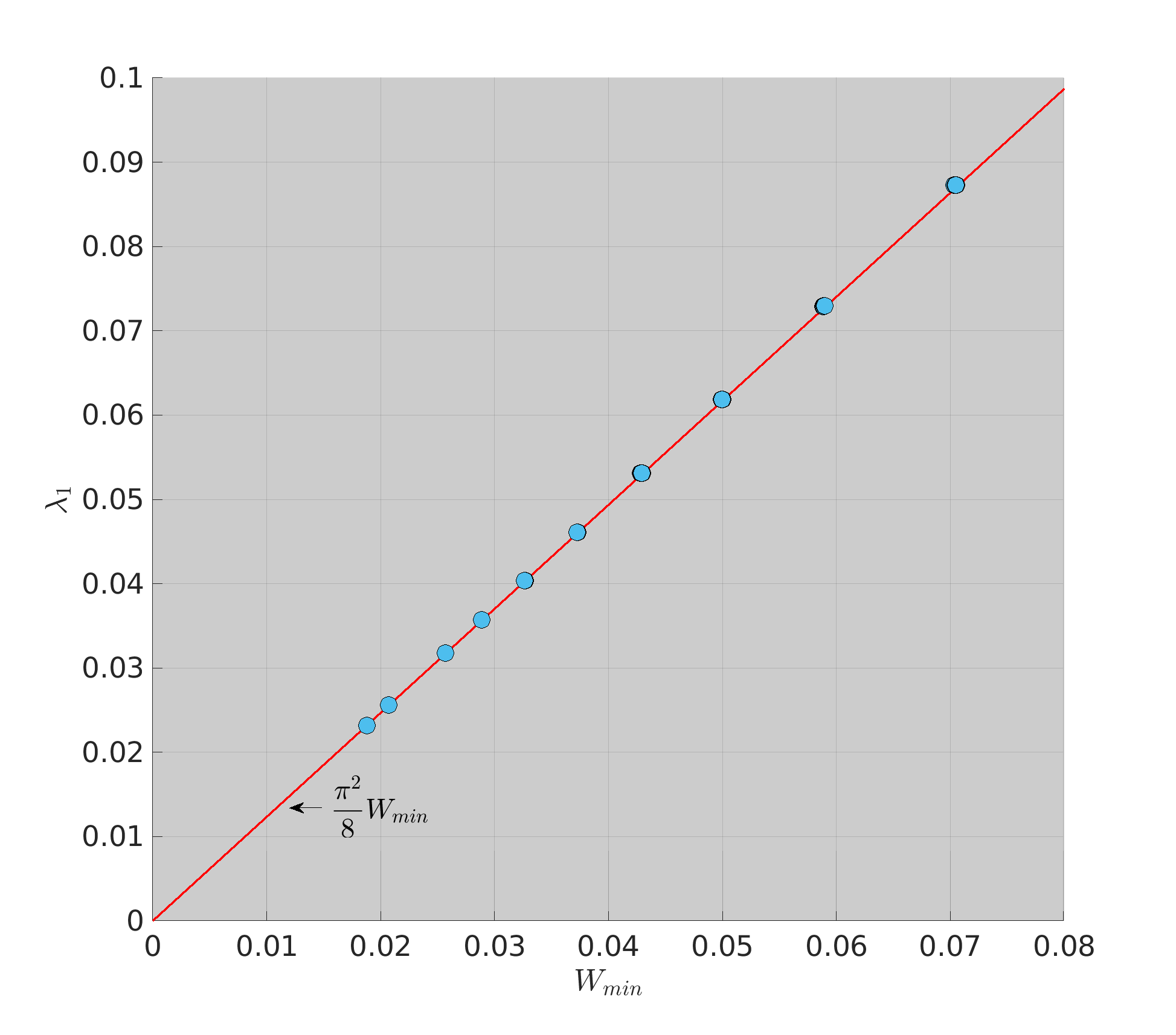}
		}
		\caption{The left plot shows the ratio from 100 random realizations. The right plot shows the first eigenvalues versus the corresponding $W_{\min}$ of 100 realizations. }	
		\label{BSeed}
	\end{figure}
	
	As is observed in Figure \ref{BSeed}, although the domain and the parameters of $V_{\om}$ are fixed,  $\lambda_1$ and $W_{\min}$ still depend on the specific realization. However, the ratio $\dfrac{\lambda_1}{W_{\text{min}}}$  always keeps close to $\dfrac{\pi^2}{8}$.

	Next, we test the ratio when $V_{\max}$ varies. Likewise, the Bernoulli potential $V_{\om}$ still involves 50\% 0 and 50\% $V_{\max}$, where  $V_{\max}$ varies from $2^{-36}\approx 1.455\times 10^{-11}$ to $2^{11}=2048$. We choose one realization with various $V_{\max}$ in the following case, where the domain is fixed as [0,1000].

	\begin{figure}[H]
		\centering
		\subfigure{
			\includegraphics[width=7.4 cm]{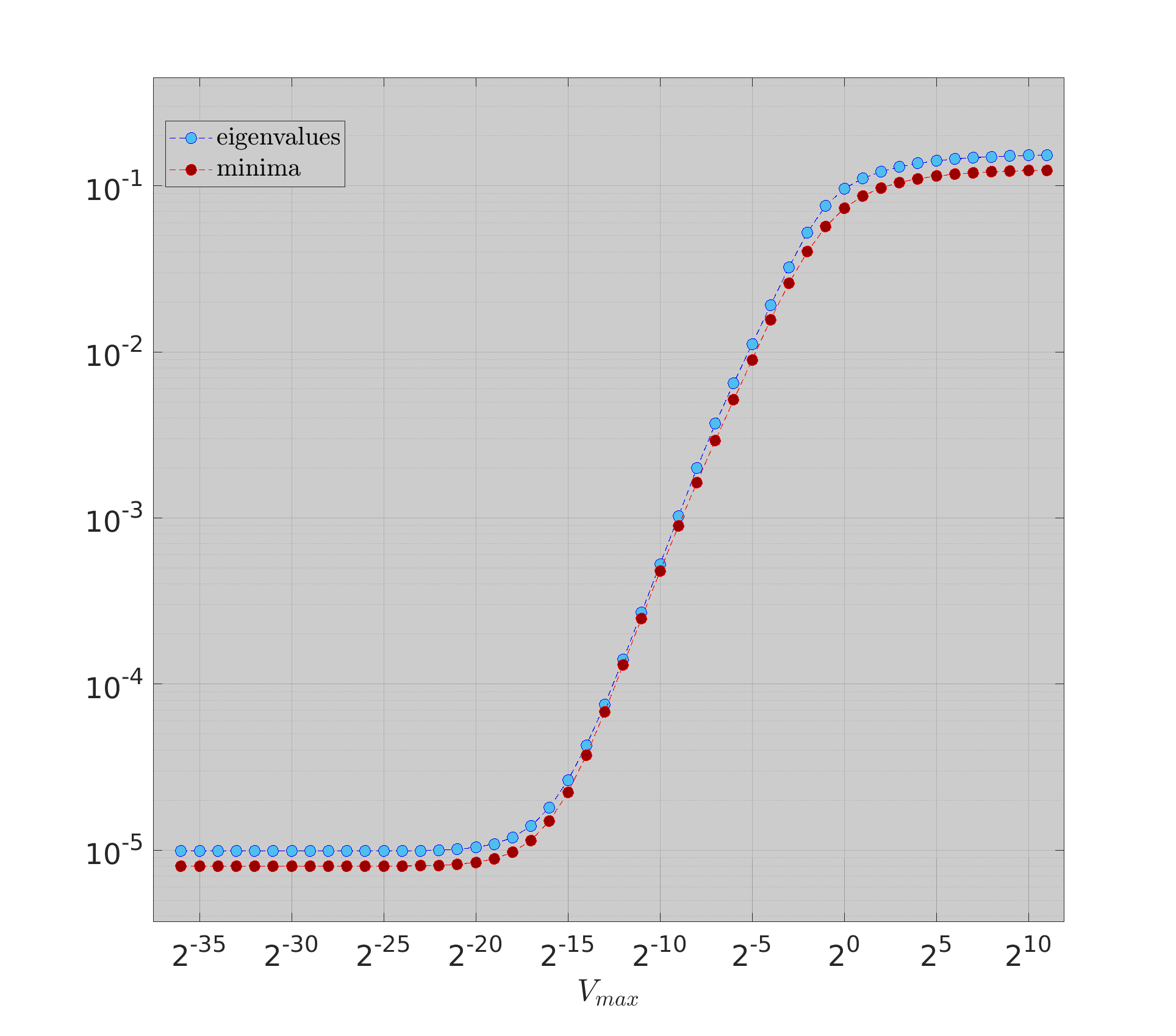}
			
		}
		\quad
		\subfigure{
			\includegraphics[width=7. cm]{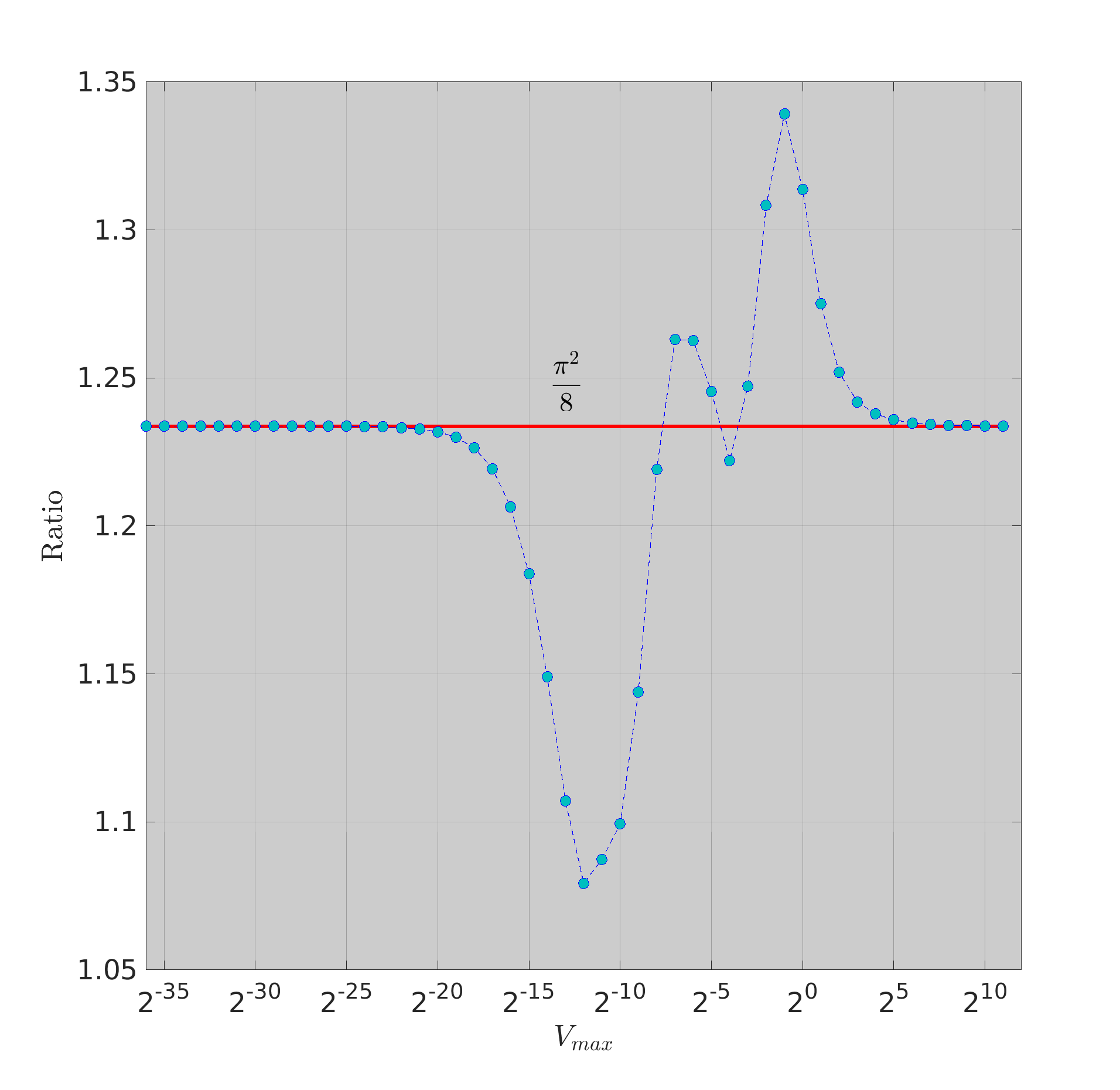}
		}
		\caption{The left plot displays a comparison of the first eigenvalue with the corresponding  $W_{\min}$ for different $V_{\max}$.  The right plot displays the ratio's dependence on $V_{\max}$, which tends to be $\dfrac{\pi^2}{8}$ when $V_{\max}$ is large or tiny.  }	
		\label{Vmax}
	\end{figure}
    Evidently, the behavior of $\dfrac{\lambda_1}{W_{\min}}$, shown in Figure \ref{Vmax}, is highly in accord with our theoretical statement: when $V_{\max}$ is close to 0 sufficiently, $\dfrac{\lambda_1}{W_{\min}}$ approaches to $\dfrac{\pi^2}{8}$, like the free Laplacian case. As $V_{\max}$ goes to infinity, $\dfrac{\lambda_1}{W_{\min}}$ gets back to $\dfrac{\pi^2}{8}$. Actually, $V_{\max}$ does not need to be sufficiently large in practical. From the right plot of Figure \ref{Vmax}, $\dfrac{\lambda_1}{W_{\min}}$ gets highly close to $\dfrac{\pi^2}{8}$ even when $V_{\max}$ is mildly large.

	To verify the ratio's dependence on the domain size $L$, in the following experiments, $V_{\max}$ is fixed and $L$ varies from $2^7=128$ to $2^{23}=8388608$. In Figure \ref{sizeL}, we consider two cases, in which two potentials with different probability and $V_{\max}$ are used. The first potential $V_{\om}$ is generated by choosing either 0 or 4 randomly with probability 70\% and 30\%
	, while in the second one, 0 and 100 are assigned randomly, with probabilities 50\%.
 	
 	Overall, both cases in Figure \ref{sizeL} support the theoretical result. As $L$ increases, both $\lambda_1$ and $W_{\min}$ get smaller, but the ratio $\dfrac{\lambda_1}{W_{\min}}$ converges to $\dfrac{\pi^2}{8}$. Although the increasing $L$ pushes the ratio to $\dfrac{\pi^2}{8}$ for various $V_{\max}$, larger $V_{\max}$ gives a faster convergence rate.
 	
	 \begin{figure}[H]
	 	\centering
	 	\subfigure{
	 		\includegraphics[width=7.2 cm]{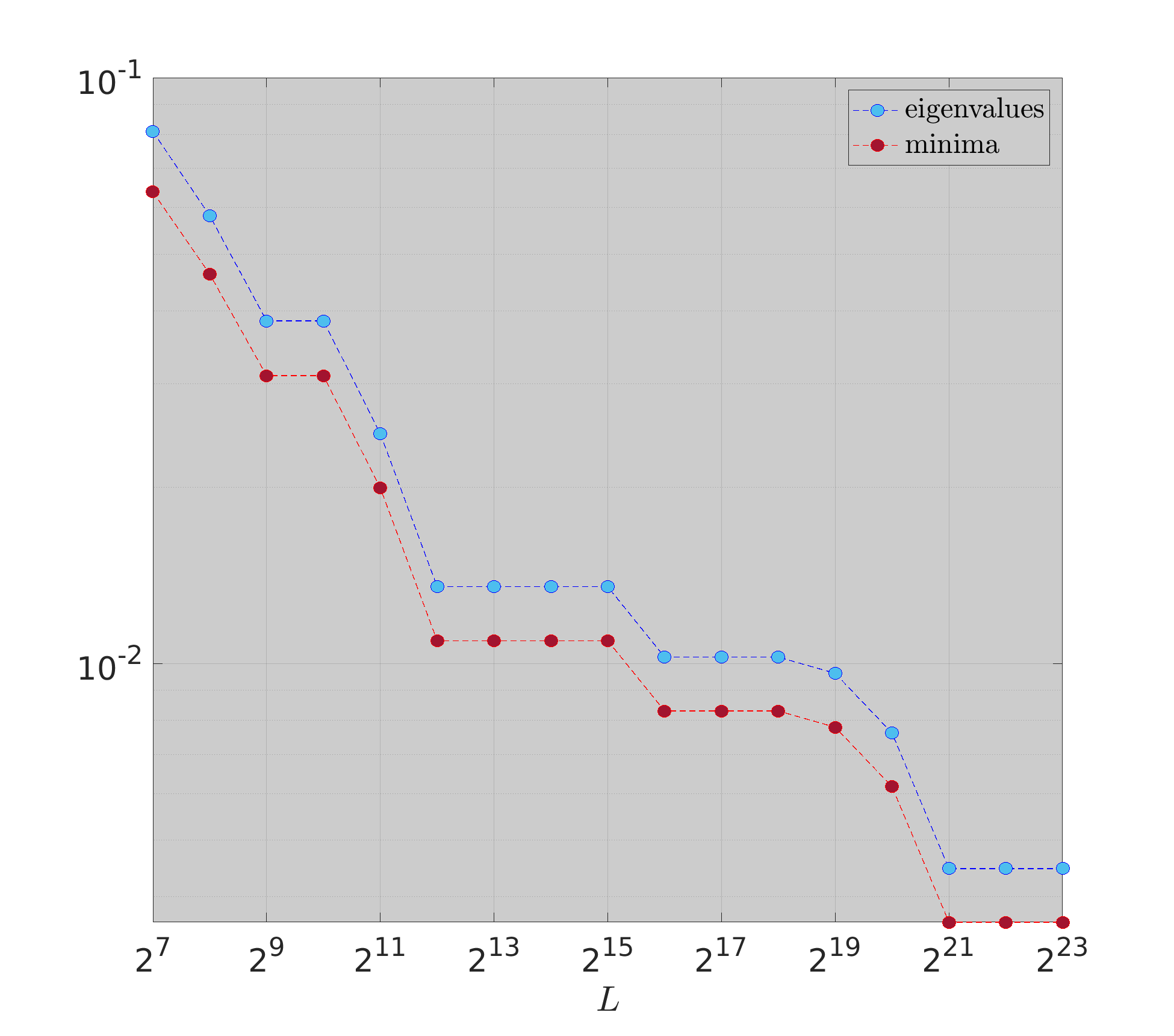}	
	 	}
	 	\quad
	 	\subfigure{
	 		\includegraphics[width=7.2 cm]{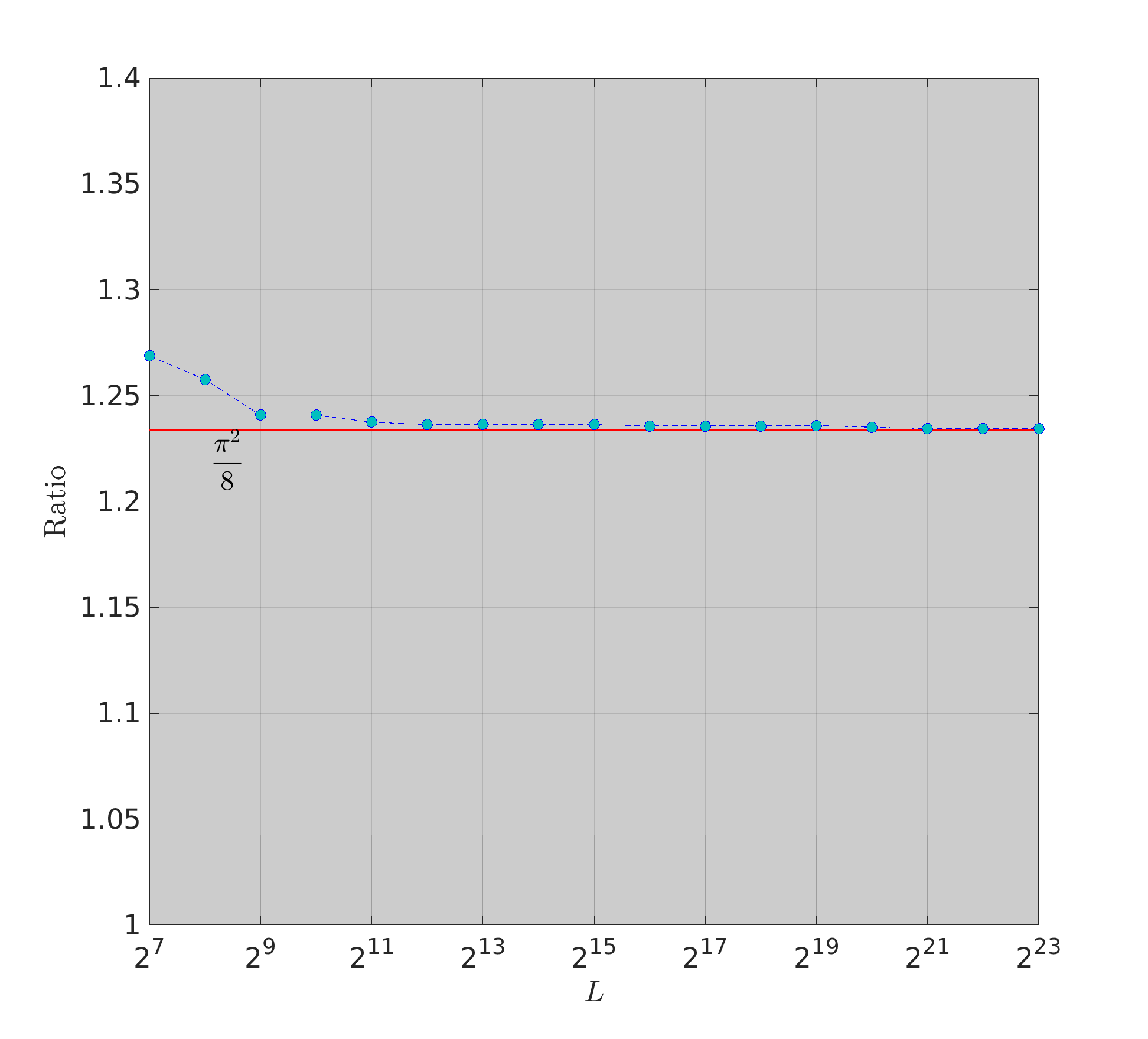}
	 	}
	 	\quad
	 	\subfigure{
	 		\includegraphics[width=7.2 cm]{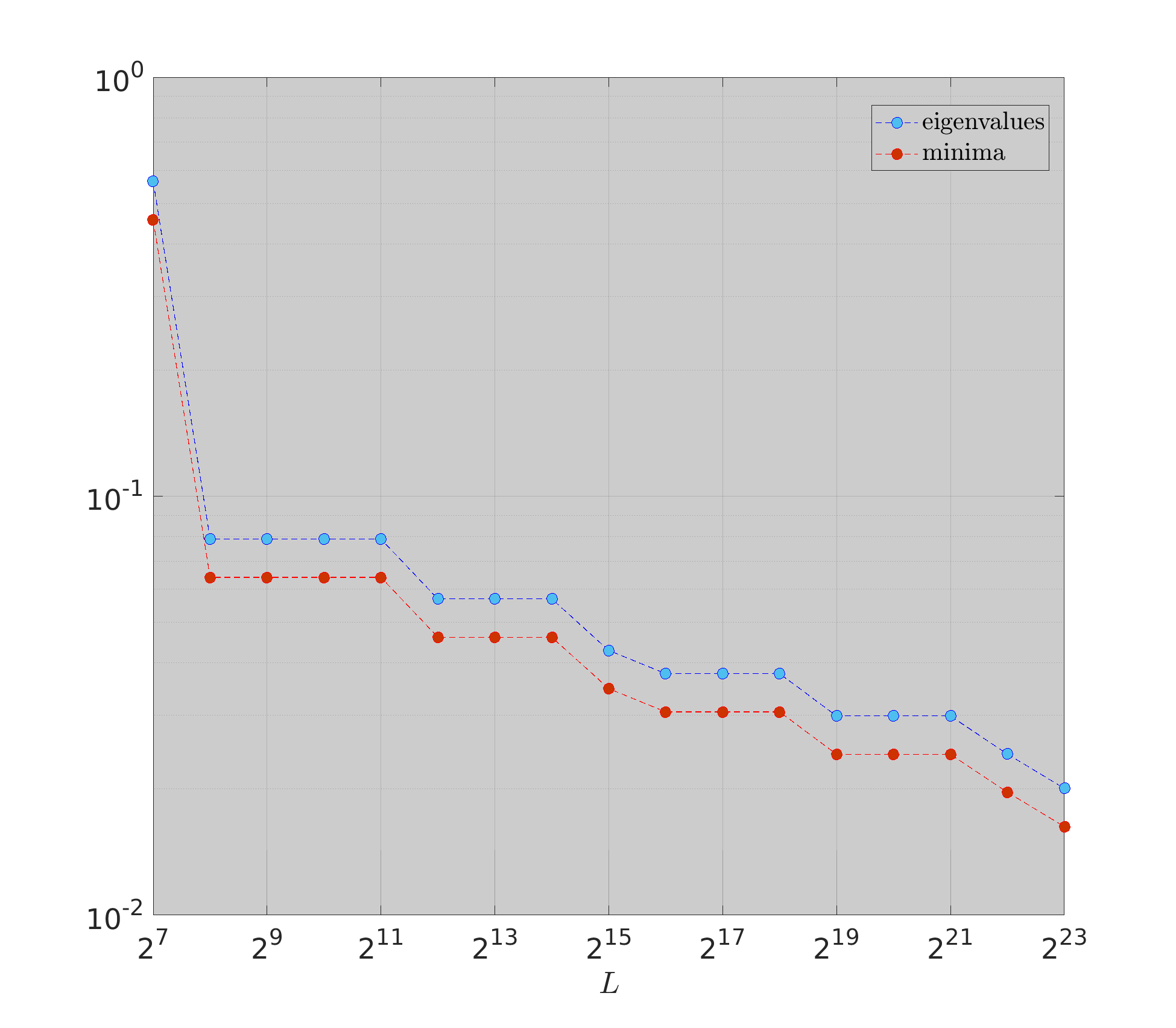}
	 	}	
	 	\quad
	 	\subfigure{
	 		\includegraphics[width=7.2cm]{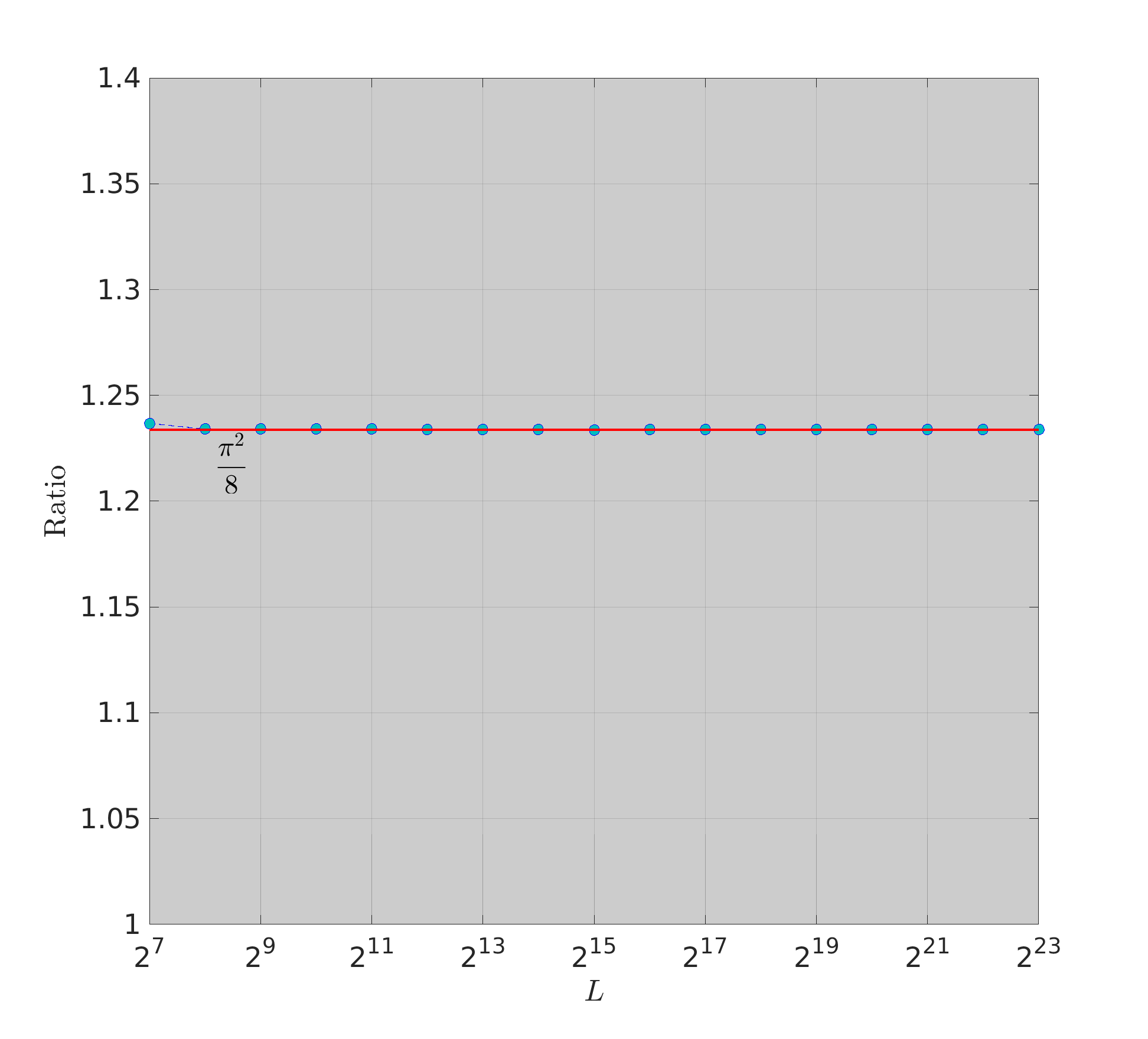}
	 	}
	 	\caption{The top row: Bernoulli potential with 70\% 0 and 30\% 4. The bottom row: Bernoulli potential with 50\% 0 and  50\% 100. The  left column displays a comparison of the first eigenvalue with the corresponding  $W_{\min}$ for different $L$.  The right column displays the corresponding ratio's dependence on $L$.  }	
	 	\label{sizeL}
	 \end{figure}

	Although, we only provide the rigorous proof for the first eigenvalue, in practical, the ratio actually can be extended to a large range of excited state eigenvalues and their associated local minima as in \eqref{eq:appro}. 
	With \eqref{eq:appro}, we can only compute the $n$-th local minimum and $\dfrac{\pi^2}{8}W_n$ to approximate $\lambda_n$, which is pretty cheap compared with solving eigenvalues directly. Figure \ref{100eigs} shows two different Bernoulli cases, in which we solve the first 100 eigenvalues and associated local minima. The corresponding ratio is very close to   $\dfrac{\pi^2}{8}$.
	
	\begin{figure}[H]
		\centering
		\subfigure{
			\includegraphics[width=7.2 cm]{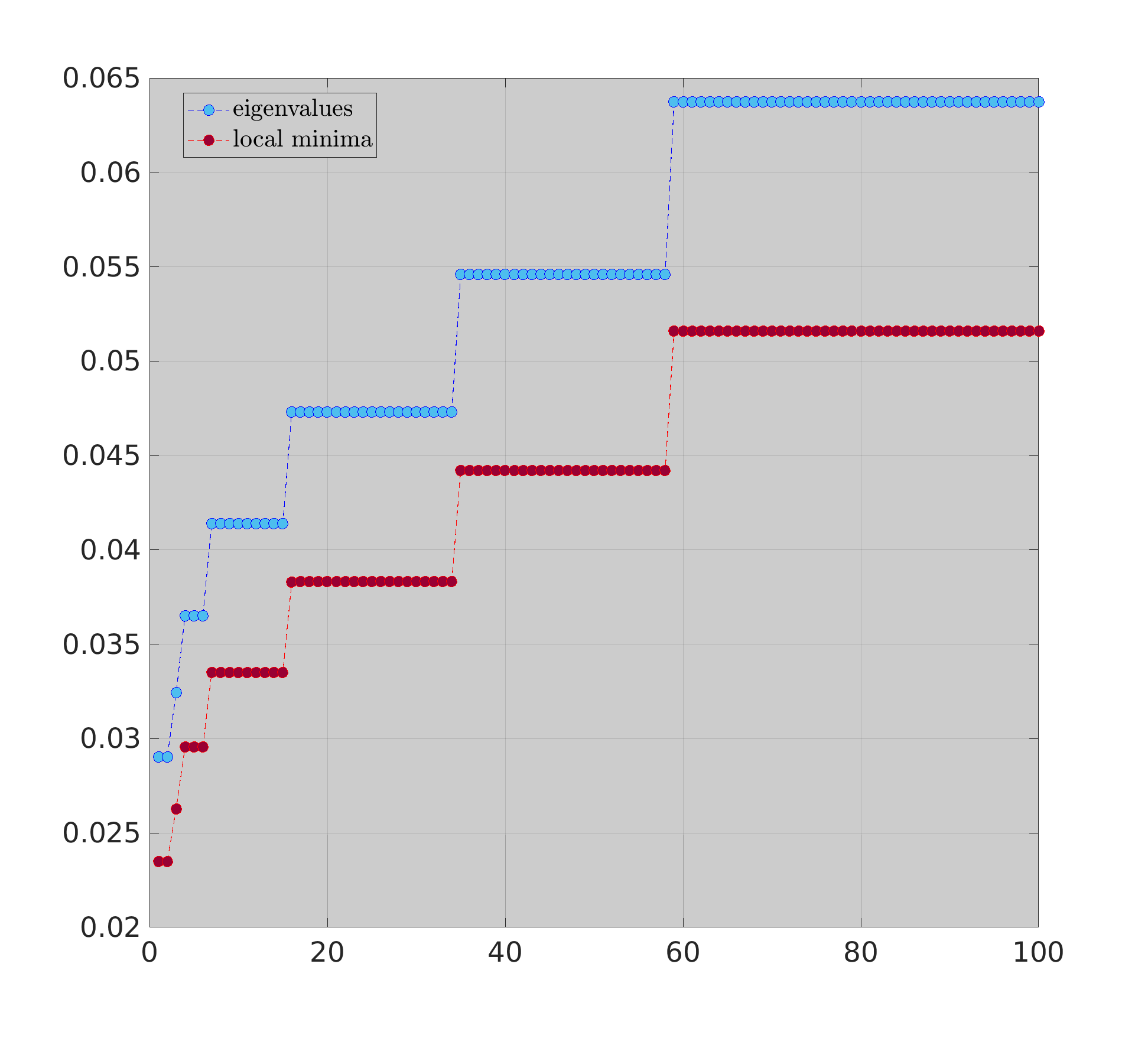}
			
		}
		\quad
		\subfigure{
			\includegraphics[width=7.2 cm]{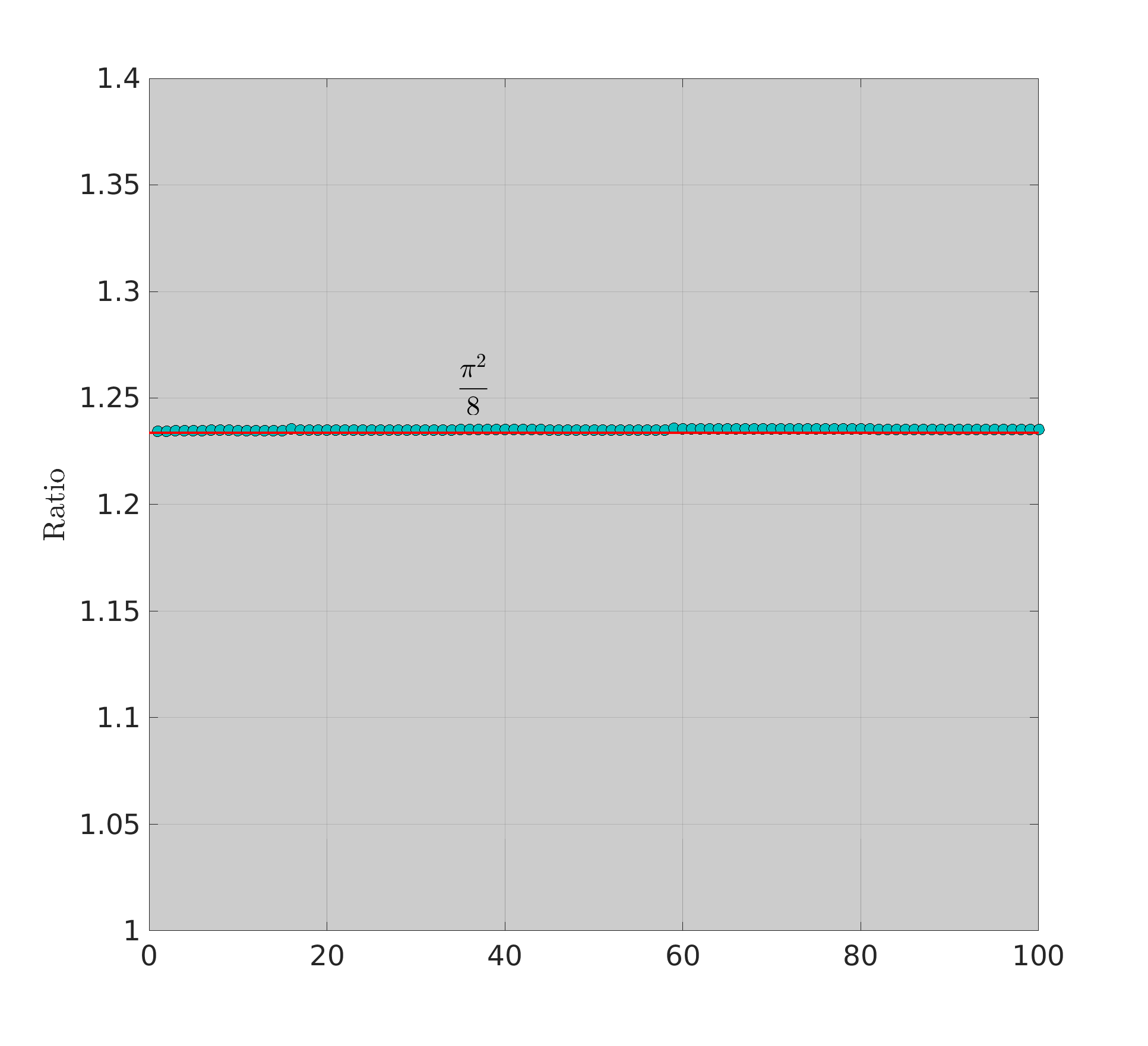}
		}
		\quad
     	\subfigure{
		\includegraphics[width=7.2 cm]{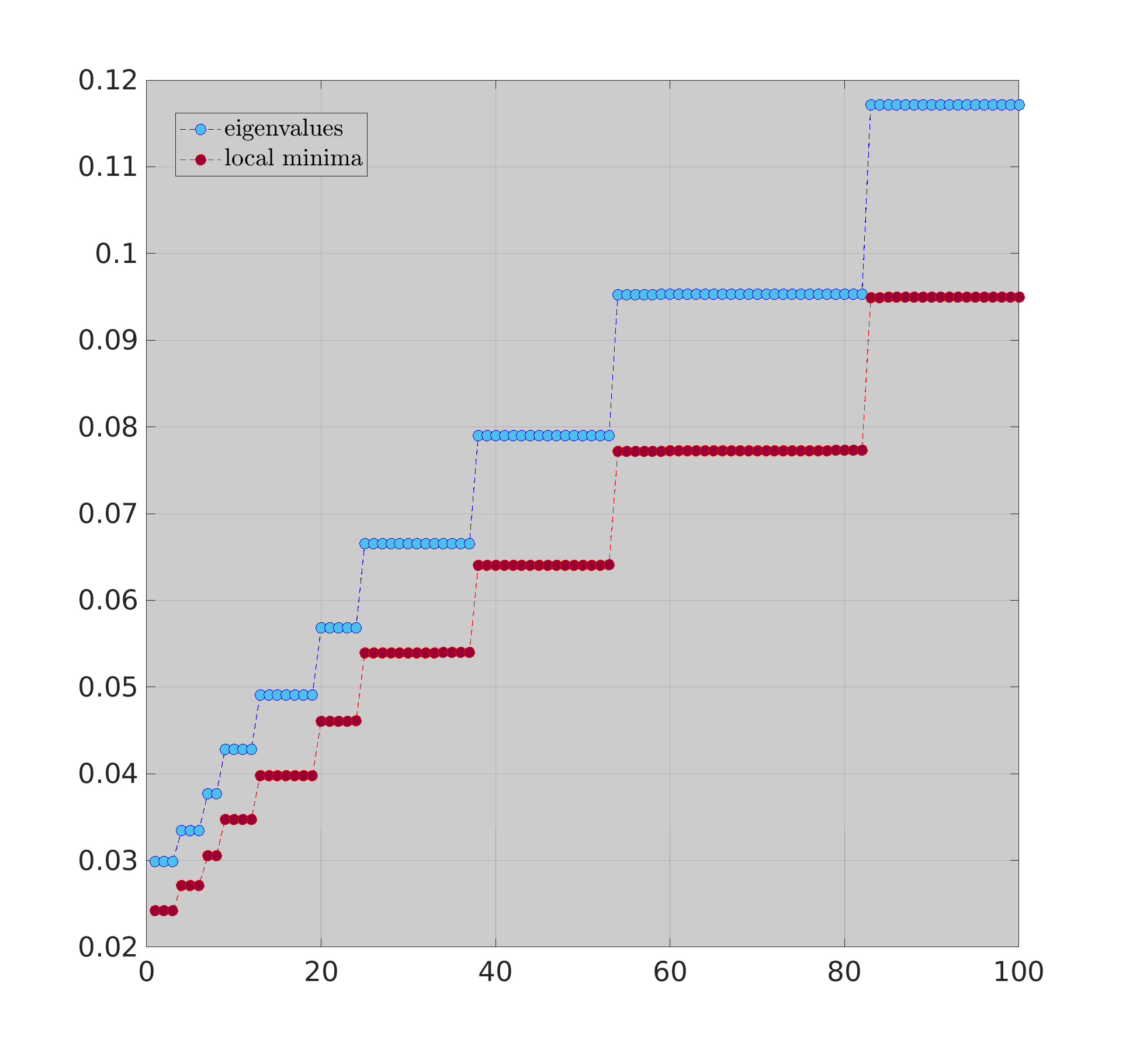}
	      }	
         \quad
        \subfigure{
        \includegraphics[width=7.2cm]{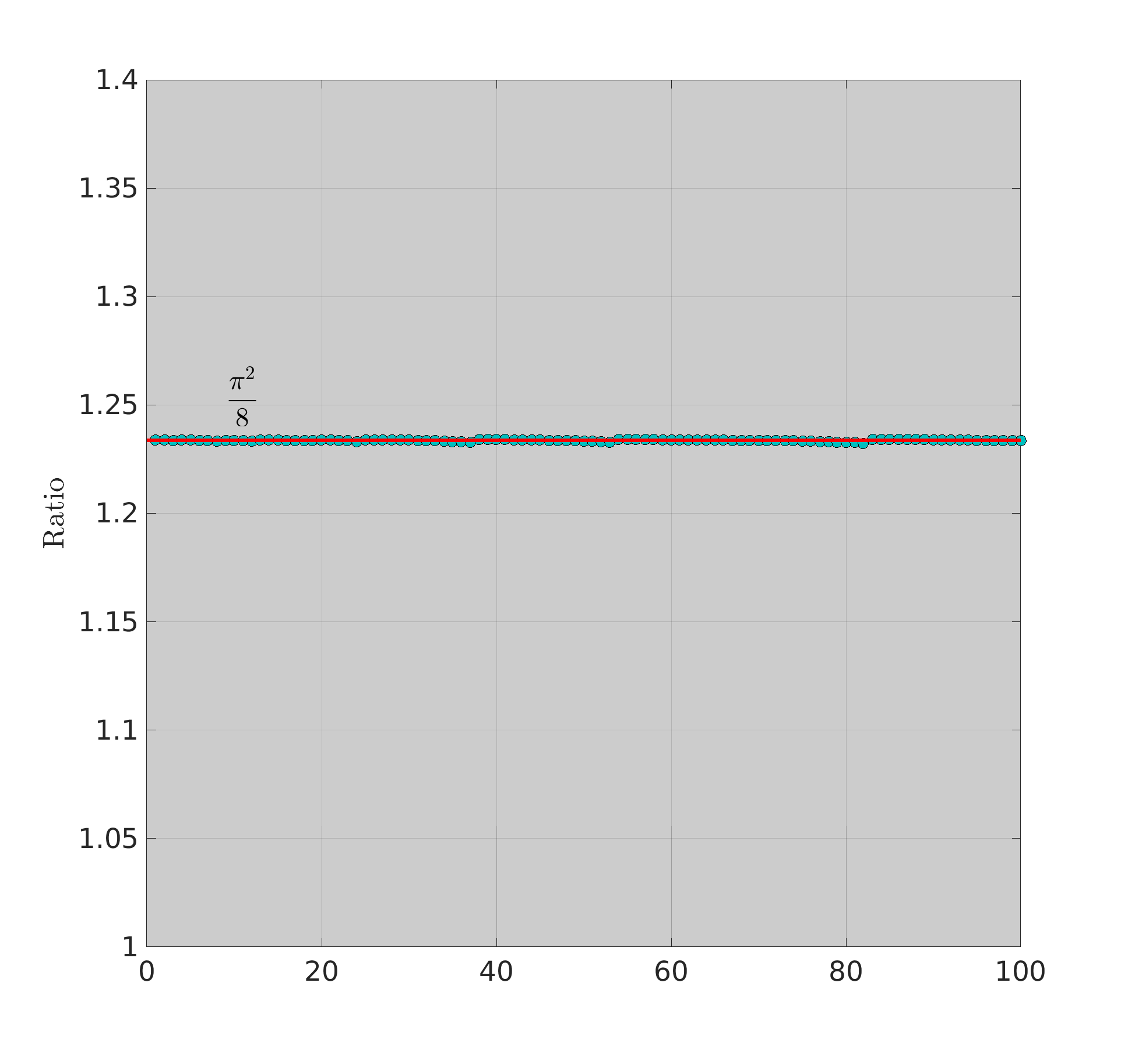}
              }
		\caption{The left column displays a comparison of the first 100 eigenvalues with the corresponding local minima of $W$.  The right column displays the corresponding ratio of  $\lambda_n$ and $W_n$ ($n=1,2,\cdots 100$) shown on the left.  The top row: Bernoulli potential with 50\% 0 and  50\% 20 on [0,1000000]. The bottom row: Bernoulli potential with 70\% 0 and  30\% 100 on [0,10000].   }	
		\label{100eigs}
	\end{figure}

 In some cases, the ratio $\dfrac{\lambda_n}{W_n}$ is away from  $\dfrac{\pi^2}{8}$ for some higher energy $\lambda_n$ and the associated local minimum $W_n$. For example, Figure \ref{Emore} shows one Bernoulli case: 30\% 20 and 70\% 0 on [0,10000]. The first 400 eigenvalues and corresponding local minima are solved:

	\begin{figure}[H]
	\centering
	\subfigure{
		\includegraphics[width=7.2 cm]{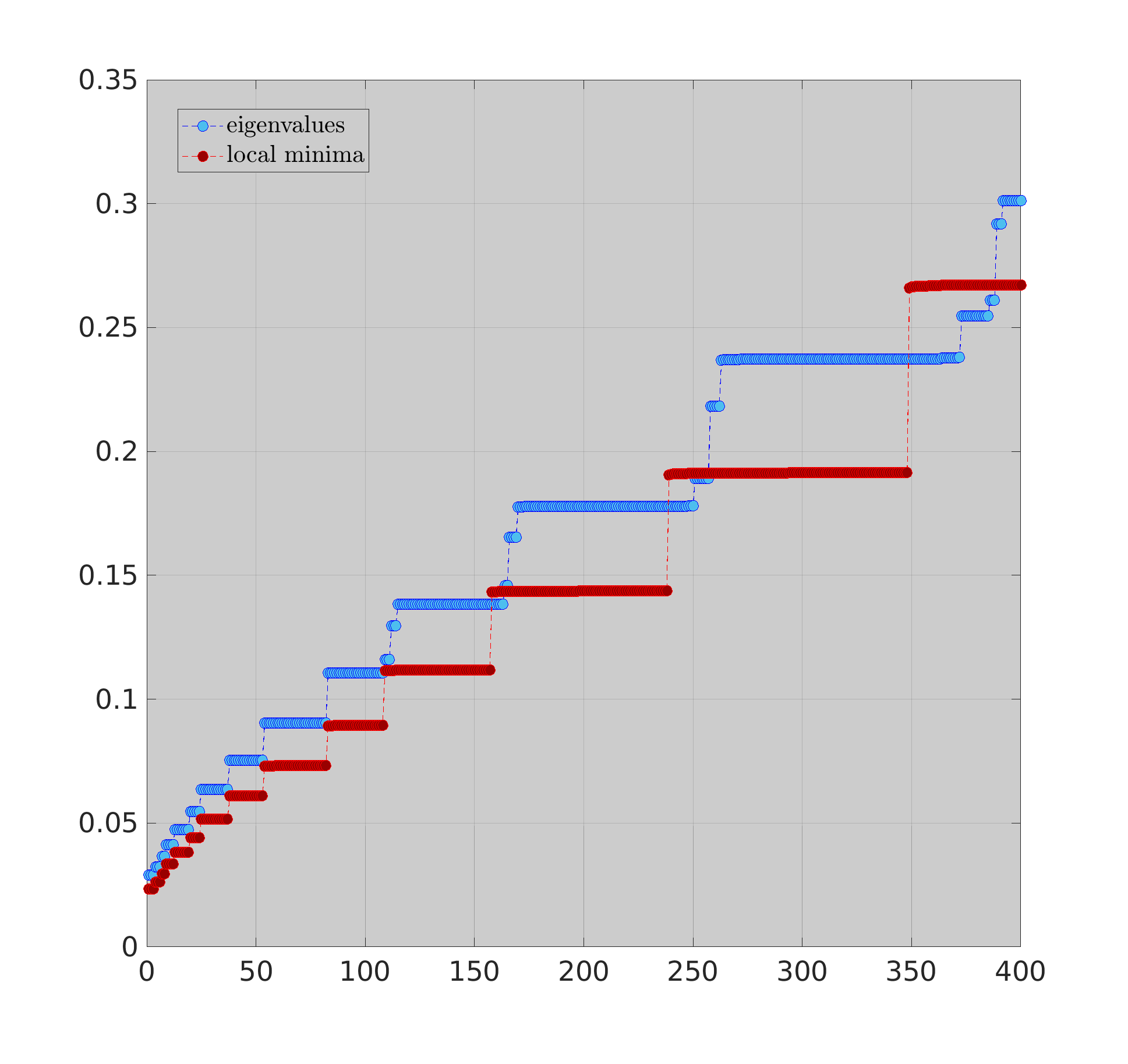}
		
	}
	\quad
	\subfigure{
		\includegraphics[width=7.2 cm]{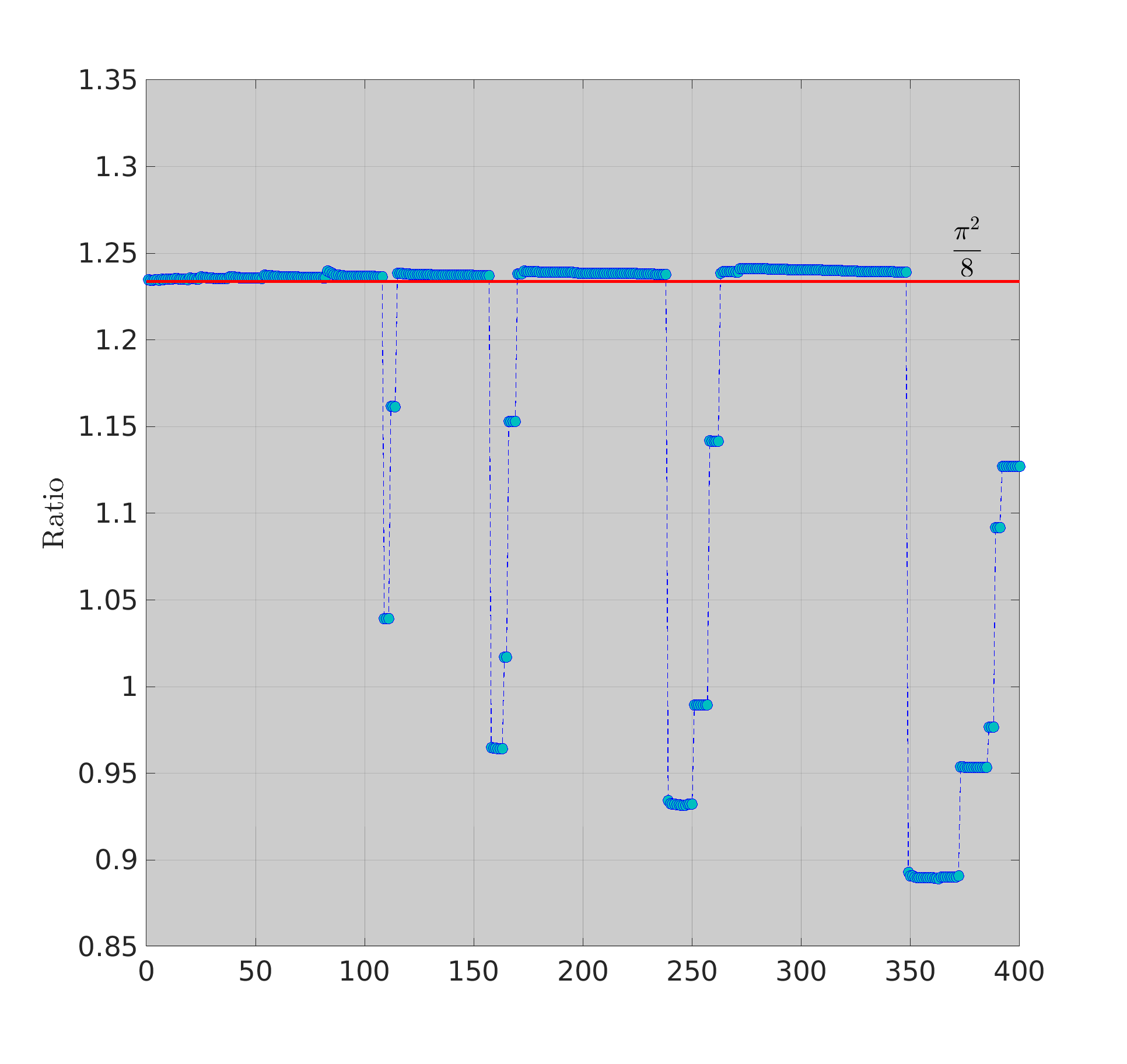}
	}
	\caption{The left plot displays a comparison of the first 400 eigenvalues with the corresponding local minima.  The right plot displays the corresponding ratio.  }	
	\label{Emore}
\end{figure}

Obviously, there are some pairs $(\lambda_n,W_n)$, whose ratio is away from  $\dfrac{\pi^2}{8}$. For example, in Figure \ref{Emore}, $(\lambda_{109},W_{109})$ is the first 'bad' pair. 

As we have introduced in Section \ref{sec:excited},  some bottom eigenvalues can be approximated by the first harmonics. However, $\lambda_{109}$ in Figure \ref{Emore} is actually contributed by the second harmonics, which means $W_{109}$ is not the correct associated local minimum.

In fact, if we consider higher up energy in \eqref{eq:Eis} contributed by the second, third, etc harmonics, there are no associated local minima from $W$ directly. To address the mismatch, we can construct a generalized local minima set, in which some artificial local minima are added. Take $\lambda_{109}$ in Figure \ref{Emore}
as an example: $\lambda_{109}$ is actually contributed by the second eigenvalue from the largest well. By (\ref{eq:Eis}), it should be almost 4 times the ground state eigenvalue from the largest well. There is no associated local minimum, but we can supplement one artificially: give it $4W_{\min}$ based on $\dfrac{\lambda_1}{W_{\min}}\approx\dfrac{\pi^2}{8}$,
then we may expect $\dfrac{\lambda_{109}}{4W_{\min}}\approx\dfrac{\pi^2}{8}$.

Therefore, we could construct a generalized local minima set of $W$. Let $W^{(1)}$ be the initial local minima set of the effective potential $W$, and the elements of  $W^{(1)}$ are sorted in  ascending order, i.e.
\[W^{(1)}=[W_1~W_2~\cdots].\]
Then we update the set by combining  $W^{(1)}$ and  $2^2W^{(1)}$. Specifically, let
\[\wt W^{(2)}=\left[
\begin{matrix}
 W^{(1)}      \\
2^2 W^{(1)}       \\
\end{matrix}
\right]=\left[
\begin{matrix}
 W_1      & W_2      & \cdots       \\
 2^2W_1      & 2^2W_2      & \cdots       \\
\end{matrix}
\right]\]
and get $W^{(2)}$ by sorting all the elements of $\wt W^{(2)}$  in ascending order:
\[W^{(2)}=\text{sort}(\wt{W}^{(2)}).\]
Similarly, for a positive integer $s$,  we could construct $W^{(s)}$ as
\[\wt W^{(s)}=\left[
\begin{matrix}
 W^{(1)}      \\
2^2 W^{(1)}       \\
3^2 W^{(1)} \\
 \vdots \\
 s^2   W^{(1)}     \\
\end{matrix}
\right]=\left[
\begin{matrix}
 W_1      & W_2      & \cdots   & \cdots     \\
 2^2W_1      & 2^2W_2      & \cdots  & \cdots      \\
  3^2W_1      & 3^2W_2      & \cdots  & \cdots      \\
 \vdots & \vdots & \ddots  & \vdots \\
 s^2W_1      & s^2W_2      & \cdots   & \cdots     \\
\end{matrix}
\right]\]
and sort all the elements of $\wt W^{(s)}$  in ascending order:
\[W^{(s)}=\text{sort}(\wt{W}^{(s)}).\]

Then, for a sufficiently large $s$, \eqref{eq:appro} is  modified as:
\[\lambda_n\approx\dfrac{\pi^2}{8} W^{(s)}_n,\]
where $W^{(s)}_n$ is the $n$-th element of $W^{(s)}$. 
But in practical, if we only focus on the first few eigenvalues, a mild $s$ and the associated $W^{(s)}$ are enough. For instance, we repair Figure \ref{Emore} by using $W^{(2)}$ and $ W^{(3)}$, instead of the initial $W^{(1)}$ shown in Figure \ref{Emore}.  We first apply $W^{(2)}$ in Figure \ref{Emore_w2}:

	\begin{figure}[H]
	\centering
	\subfigure{
		\includegraphics[width=7.2 cm]{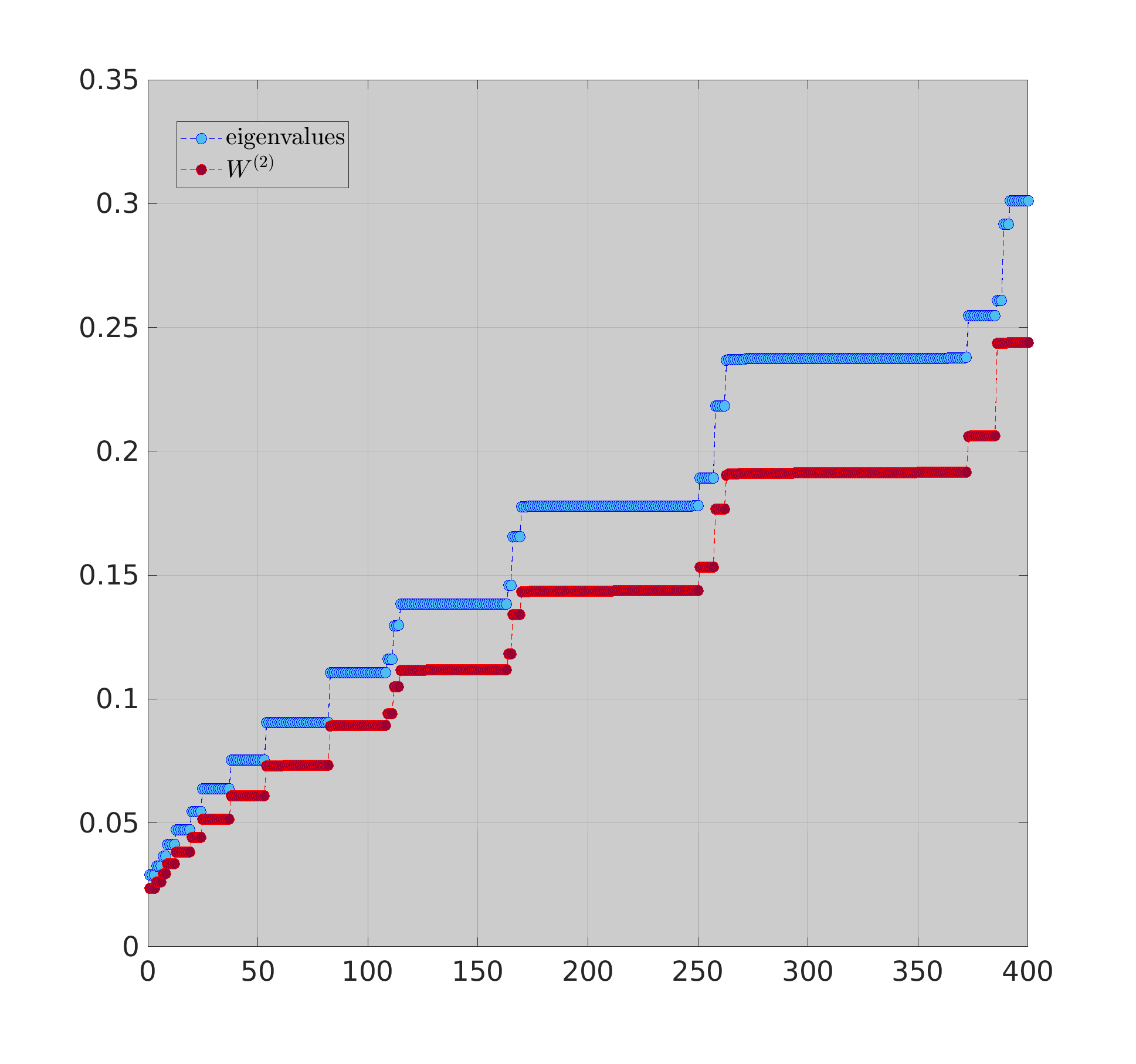}
		
	}
	\quad
	\subfigure{
		\includegraphics[width=7.2 cm]{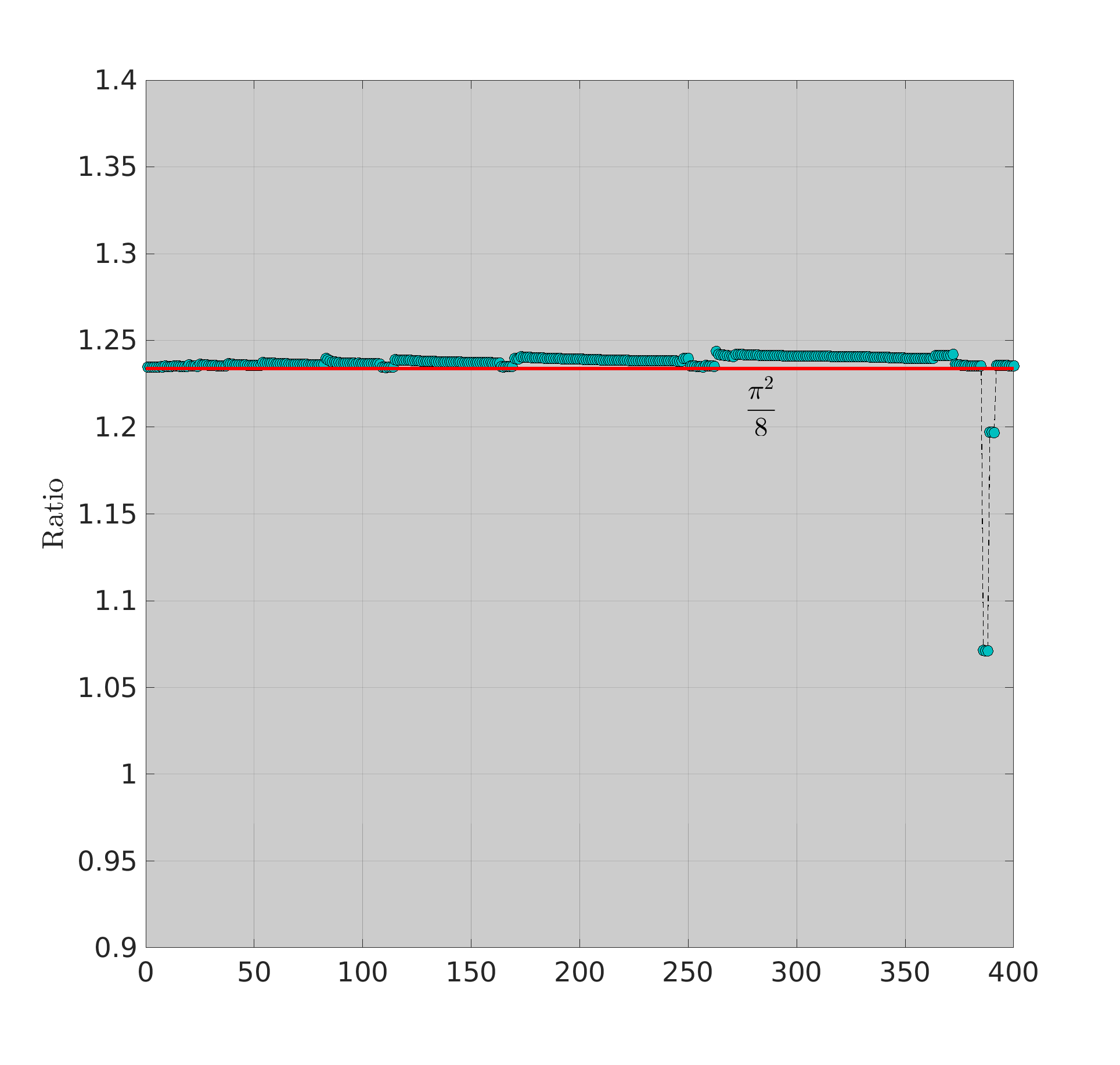}
	}
	\caption{The left plot displays a comparison of the first 400 eigenvalues with the first 400 values from $W^{(2)}$ .  The right plot displays the corresponding ratio $\dfrac{\lambda_n}{W^{(2)}_n}$, $n=1,2,\cdots,400$.  }	
	\label{Emore_w2}
\end{figure}
When $W^{(2)}$ is applied, the behavior of the ratio improves. However, $W^{(2)}$ can not repair all the ratio about the first 400 eigenvalues, because some eigenvalues are actually contributed by the third harmonics. Then we consider $W^{(3)}$ (Figure \ref{Emore_w3}):

	\begin{figure}[H]
	\centering
	\subfigure{
		\includegraphics[width=7.2 cm]{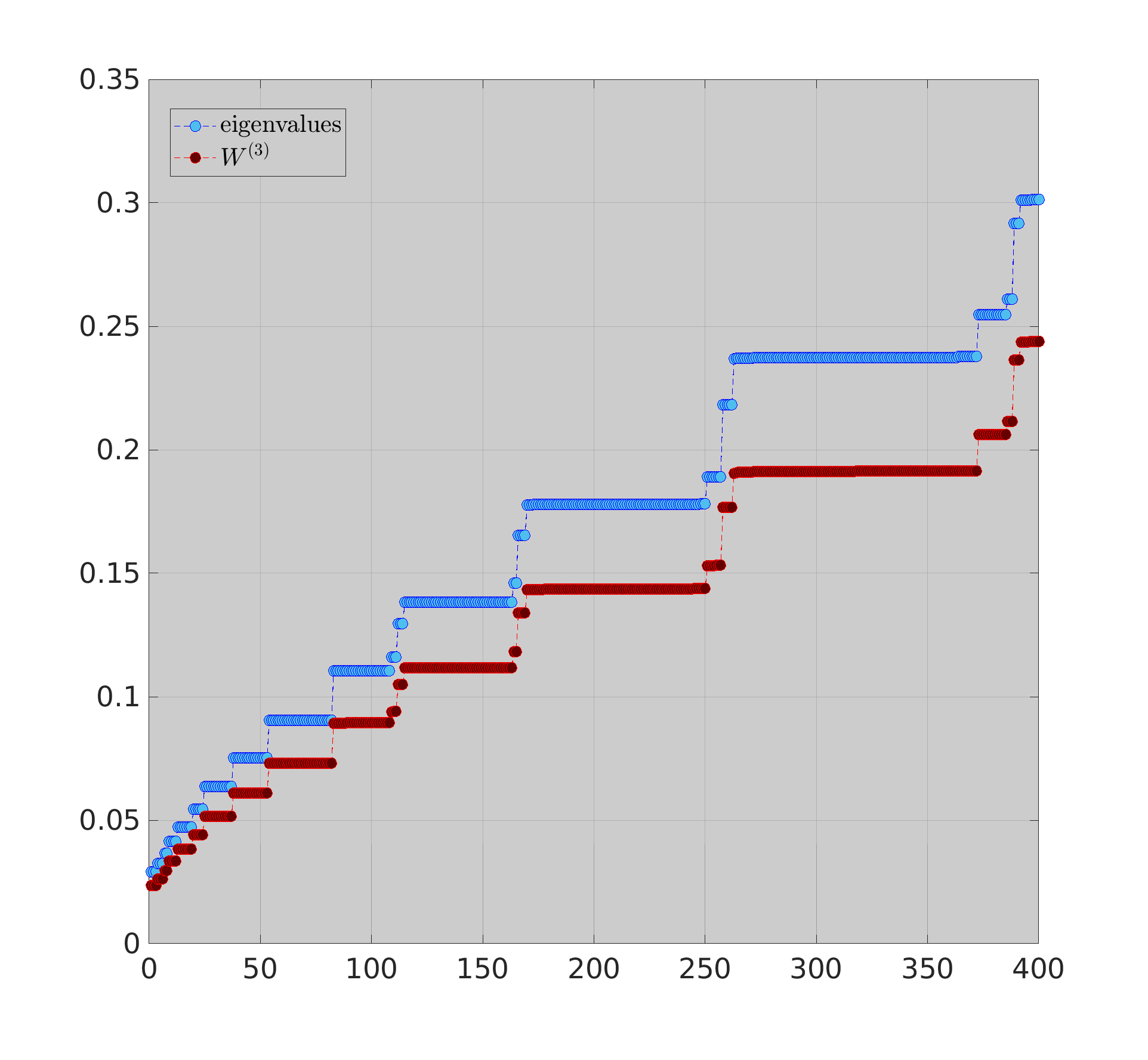}
		
	}
	\quad
	\subfigure{
		\includegraphics[width=7.2 cm]{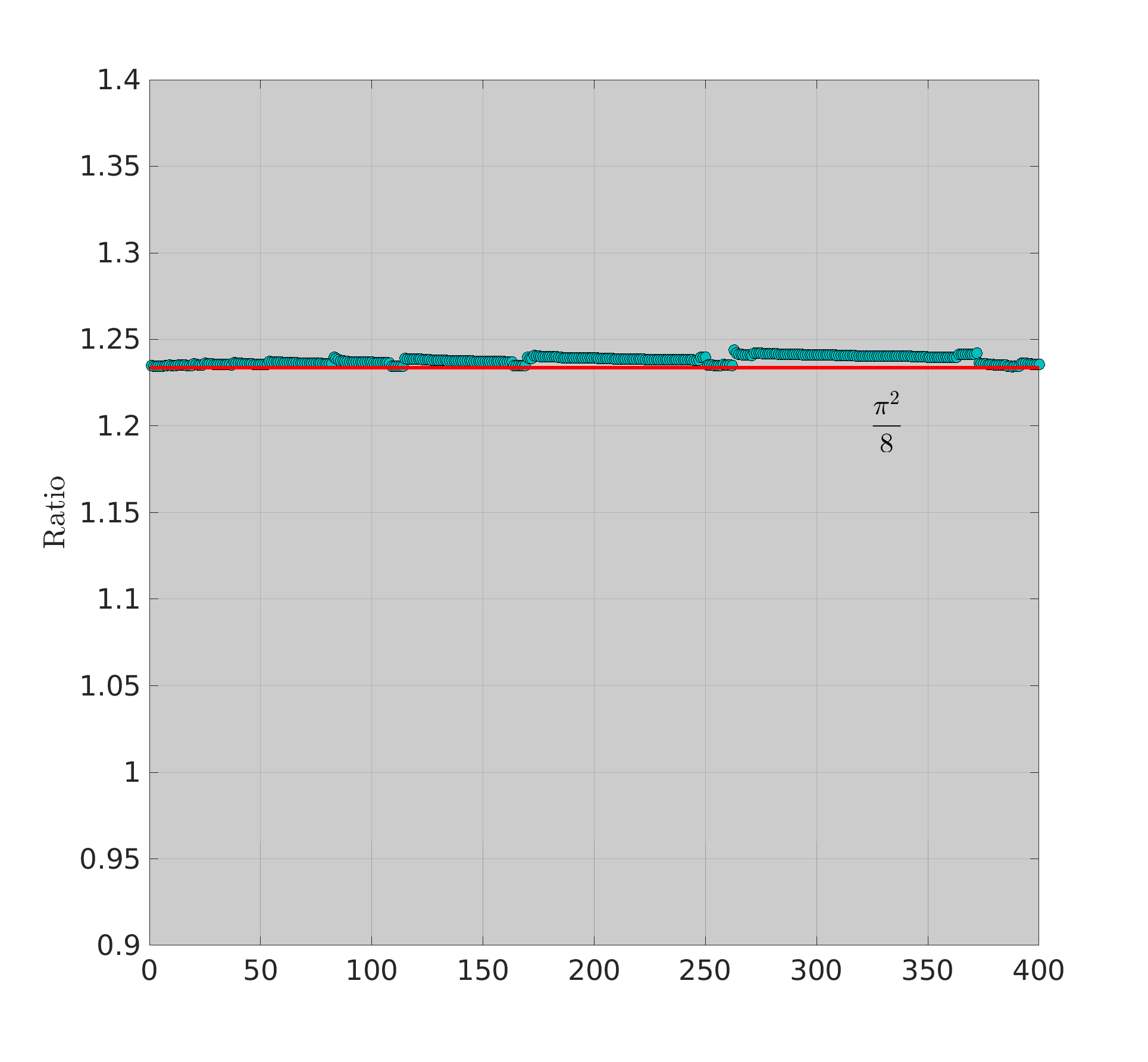}
	}
	\caption{The left plot displays a comparison of the first 400 eigenvalues with the first 400 values from $W^{(3)}$ .  The right plot displays the corresponding ratio $\dfrac{\lambda_n}{W^{(3)}_n}$, $n=1,2,\cdots,400$.  }		
	\label{Emore_w3}
\end{figure}

After $W^{(3)}$ is applied, we could finally see

\[\dfrac{\lambda_n}{W^{(3)}_n}\approx\dfrac{\pi^2}{8},\quad n=1,2,\cdots,400.\]
In other words, $\dfrac{\pi^2}{8}W^{(3)}_n$ ($n=1,2,\cdots,400$) could be used to approximate the first 400 eigenvalues efficiently.

On the other hand, $W^{(3)}$ is enough when we concentrate the first 400 eigenvalues in this case. This is because the fourth harmonics makes no contribution to any of the  first 400 eigenvalues. Actually, the first 400 values of $W^{(3)}$ will not change when it is updated to $W^{(4)}$.

The case in Figure \ref{Emore} is based on $V_{\max}=20$. Although it is not very high, it still works well after we apply the generalized local minima set $W^{(3)}$. Now we try a smaller $V_{\max}$. In the following case,  the potential $V_{\om}$ involves 30\% 4 and 70\% 0, and the domain size $L=1000000$. Then Figure \ref{fig10} shows the ratio of the first 400 eigenvalues over the first 400 values from $W^{(1)}$ and $W^{(3)}$.

\begin{figure}[H]
	\centering
	\includegraphics[width=0.7\linewidth]{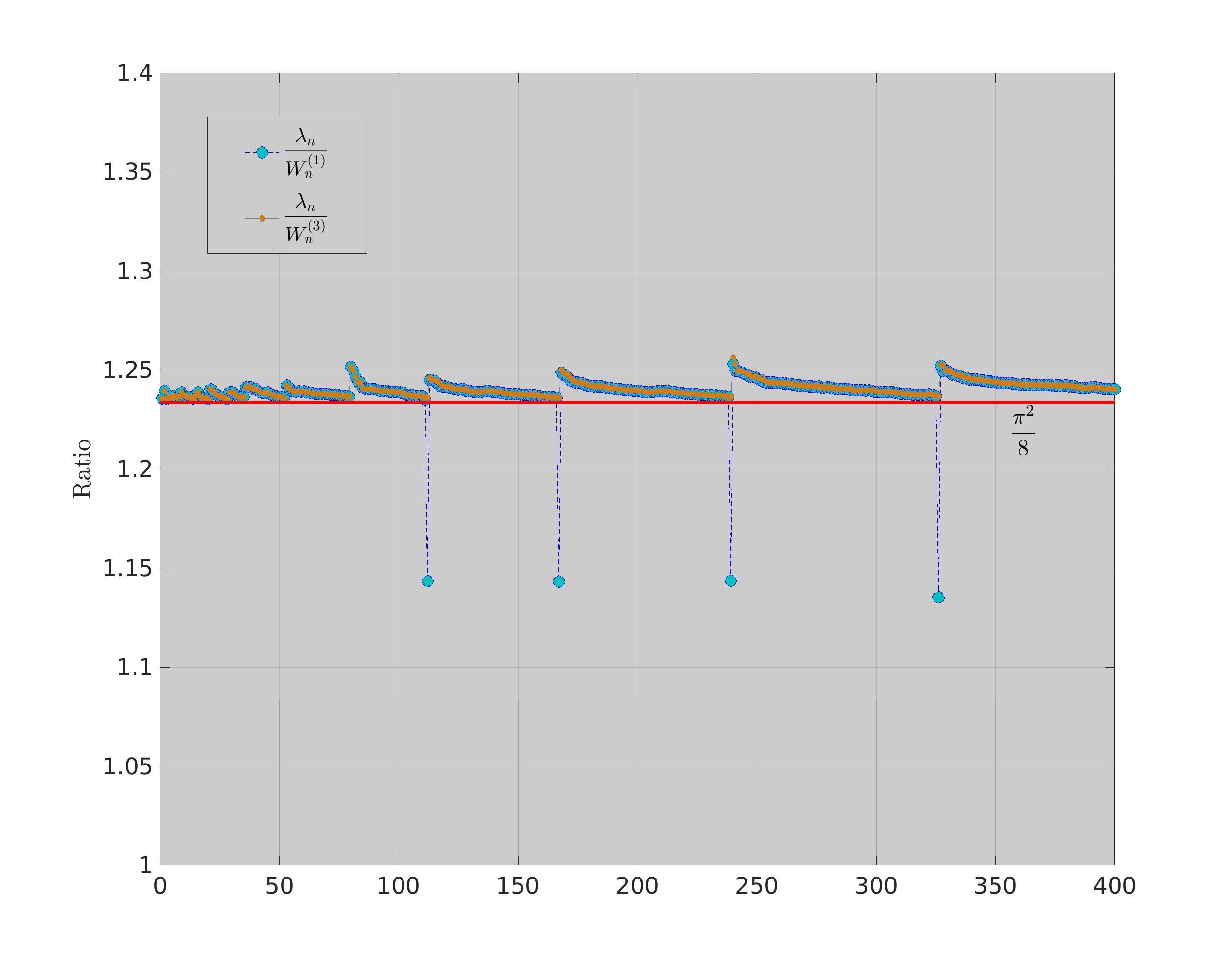}
	\caption{Comparison of  $\dfrac{\lambda_n}{W^{(1)}_n}$ and $\dfrac{\lambda_n}{W^{(3)}_n} $, $n=1,2,\cdots,400.$}
	\label{fig10}
\end{figure}

Consequently, it works well when we apply  $W^{(3)}$.

\vspace{1cm}

\noindent
\textbf{Acknowledgments.} The authors would like to thank Douglas N. Arnold and Svitlana Mayboroda for many stimulating discussions and useful suggestions. 

Chenn is supported through a Simons Foundation Grant (601948 DJ) and a PDF fellowship from NSERC/Cette recherche a \'{e}t\'{e} financ\'{e}e par le CRSNG.  Wang is supported by Simons Foundation grant 601937, DNA. Zhang
is supported in part by the NSF grants DMS1344235, DMS-1839077, and Simons Foundation grant 563916, SM.

\Addresses

\end{document}